\newcommand{\C}{\mathbb{C}}
\newcommand{\h}{\mathcal{H}}
\newcommand{\diam}{{\operatorname{diam}}}
\newcommand{\dist}{{\operatorname{dist}}}
\newtheorem{definition}{Definition}
\newtheorem{theorem}{Theorem}
\newtheorem*{theoremA}{Theorem {$\mathbf{1'}$}}
\newtheorem{corollary}{Corollary}
\newtheorem{lemma}{Lemma}
\theoremstyle{remark}
\newtheorem{remark}{Remark}
\newcommand{\R}{\mathbb R}
\begin{document}
\title[Nodal sets of Dirichlet Laplace eigenfunctions]{The sharp upper bound for the area of the nodal sets of Dirichlet Laplace eigenfunctions}
\author{A. Logunov}
\address{A.L.:\,Department of Mathematics, Princeton University, Princeton,\,NJ, USA}
\email{log239@yandex.ru}
\author{ E. Malinnikova}
\address{E.M.:\,Department of Mathematics, Stanford University, Stanford,\,CA, USA\ \ \ \ \ \ \ 
Department of Mathematical Sciences, NTNU, Trondheim, Norway}
\email{eugeniam@stanford.edu}
\author{N. Nadirashvili}
\address{N.N:\,CNRS, Institut de Math\'ematiques de Marseille, Marseille, France}
\email{nikolay.nadirashvili@univ-amu.fr}
\author{ F. Nazarov}
\address{F.N.:\,Department of Mathematics, Kent State University, Kent,\,OH, USA}
\email{nazarov@math.kent.edu}

\begin{abstract} Let $\Omega$ be a bounded  domain in $\R^n$ with $C^{1}$ boundary and let $u_\lambda$ be a Dirichlet Laplace eigenfunction in $\Omega$ with eigenvalue $\lambda$.  We show that the $(n-1)$-dimensional Hausdorff measure of the zero set of $u_\lambda$ does not exceed $C(\Omega)\sqrt{\lambda}$. This result is new even for the case of domains with $C^\infty$-smooth boundary. 
	\end{abstract}
\maketitle

\section{Introduction} Let $\Delta_M$ be the Laplace operator on a compact $n$-dimensional Riemannian manifold and let $u_\lambda$ be an eigenfunction of $-\Delta_M$ with the eigenvalue $\lambda$, i.e., $\Delta_M u_\lambda+\lambda u_\lambda=0$. Denote by $Z(u_\lambda)=\{u_\lambda=0\}$ the zero set of $u_\lambda$.
S.~T.~Yau \cite{Y} conjectured that the surface area  of the zero set  of $u_\lambda$  satisfies the following inequalities
\[c\sqrt{\lambda}\le \h^{n-1}(Z(u_\lambda))\le C\sqrt{\lambda},\]
where the constants $c,C$ depend on $M$.
This conjecture was proved by Donnelly and Fefferman in \cite{DF} under the assumption that the metric is real analytic. 
The lower bound  and a polynomial in $\lambda$ upper bound  were obtained recently by the first author  in \cite{L2} and \cite{L1} respectively. 

In this article we consider the case of eigenfunctions of the Euclidean Laplace operator on a bounded domain with sufficiently regular boundary and the Dirichlet boundary condition. One of our results is the following.
\begin{theorem}\label{th:main}
 Let $\Omega$ be a bounded domain in $\R^n$ with $C^{1}$ boundary and let
 $u_\lambda$ be an  eigenfunction of the Laplace operator with the Dirichlet boundary condition, $\Delta u_\lambda+\lambda u_\lambda=0$ and $u_\lambda|_{\partial\Omega}=0$.   Then
\begin{equation}\label{eq:main} \h^{n-1}(Z(u_\lambda))\le C\sqrt{\lambda},\end{equation}
where $C$ depends only on  $\Omega$. 
\end{theorem}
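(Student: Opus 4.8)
\emph{Overview and the harmonic lift.} The plan is to trade the eigenvalue equation for genuine harmonicity in one extra dimension, where the \emph{sharp} relation between the size of the nodal set and the doubling index is available, and then to run a covering argument at the natural scale $\lambda^{-1/2}$. Concretely, set
$v(x,t)=u_\lambda(x)\cosh(\sqrt\lambda\,t)$ on $\Omega\times\R\subset\R^{n+1}$. Since $\Delta u_\lambda+\lambda u_\lambda=0$, the function $v$ is harmonic in $\Omega\times\R$, it vanishes on the lateral boundary $\partial\Omega\times\R$ (a $C^1$ hypersurface of $\R^{n+1}$), and its zero set is the product $Z(v)=Z(u_\lambda)\times\R$. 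Hence $\h^{n}\big(Z(v)\cap(\Omega\times(-1,1))\big)=2\,\h^{n-1}(Z(u_\lambda))$, and \eqref{eq:main} is equivalent to the bound $\h^{n}\big(Z(v)\cap(\Omega\times(-1,1))\big)\le C\sqrt\lambda$.

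\emph{Doubling at the scale $\lambda^{-1/2}$.} Fix $r=c\lambda^{-1/2}$ with $c$ a small dimensional constant, and consider the doubling index $N(v,B):=\log_2\big(\sup_{2B}|v|\,/\,\sup_{B}|v|\big)$ over balls $B=B((x_0,0),r)$ with $x_0\in\overline\Omega$. On this scale the factor $\cosh(\sqrt\lambda t)$ varies by a bounded factor, so one reduces to the doubling estimate $\sup_{B(x_0,2r)\cap\Omega}|u_\lambda|\le C\sup_{B(x_0,r)\cap\Omega}|u_\lambda|$ for $r\le c\lambda^{-1/2}$. For interior balls this is the classical propagation-of-smallness/doubling estimate for eigenfunctions (Donnelly--Fefferman, Garofalo--Lin). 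For balls centered at or near $\partial\Omega$ one needs the corresponding \emph{boundary} doubling estimate for the Dirichlet problem in a $C^1$ domain — equivalently, a doubling estimate for harmonic functions in $\Omega\times\R$ vanishing on the $C^1$ hypersurface $\partial\Omega\times\R$. The outcome should be a uniform bound $N(v,B)\le N_0$ with $N_0$ depending only on $n$.

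\emph{Local nodal bound and summation.} Granting the uniform doubling bound, one invokes the sharp estimate for nodal sets of harmonic functions: if $h$ is harmonic in $2B$ with $N(h,B)\le N_0$, then $\h^{n}(Z(h)\cap B)\le C(n,N_0)\,r^{n}$ (the interior case going back, in the harmonic setting, to Donnelly--Fefferman \cite{DF} and Hardt--Simon), together with the analogous bound for a harmonic function vanishing on a $C^1$ boundary portion when $B$ is a boundary ball. Covering $\overline\Omega\times(-1,1)$ by $\asymp\lambda^{(n+1)/2}$ balls of radius $r=c\lambda^{-1/2}$ with bounded overlap and summing,
\[
\h^{n}\big(Z(v)\cap(\Omega\times(-1,1))\big)\ \le\ C(n)\,\lambda^{(n+1)/2}\cdot\lambda^{-n/2}\ =\ C\sqrt\lambda ,
\]
which is \eqref{eq:main}. (Note that the piece of $(\partial\Omega\times\R)\cap B$ that lies in $Z(v)$ is, for each boundary ball, a $C^1$ surface patch of $\h^{n}$-measure $O(r^{n})$, so it fits within the same budget.)

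\emph{Main obstacle.} Everything away from $\partial\Omega$ is classical once the harmonic lift is in place; the genuinely new difficulties are concentrated at the boundary and are both caused by its low, $C^1$, regularity. First, one must prove the boundary doubling estimate for the Dirichlet Laplacian in a $C^1$ domain: after flattening the boundary the Euclidean Laplacian becomes a divergence-form operator with merely continuous coefficients, so the usual frequency-function and Carleman arguments do not apply verbatim, and one must exploit the special structure of the flattening (choosing it so the conormal is exact on the boundary, keeping the reflection-invariant form of the coefficients, and using a Carleman inequality tailored to a $C^1$ boundary). Second, one must bound the nodal measure of a harmonic function near a $C^1$ boundary portion on which it vanishes; here even boundedness of the normal derivative can fail, so the nodal set may approach the boundary in an intricate way, and one needs a delicate local analysis — comparison with the distance function and harmonic measure, plus the harmonic nodal bound applied on a Whitney decomposition of the interior, with estimates that survive the passage to the boundary. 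These two boundary estimates, rather than the (standard) interior reduction, are where I expect the bulk of the work to lie.
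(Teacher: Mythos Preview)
Your harmonic lift and your identification of the boundary as the crux are both correct and match the paper's plan. But the quantitative scheme has a genuine error: the claim that the doubling index $N(v,B)$ is bounded by a dimensional constant $N_0$ at scale $r=c\lambda^{-1/2}$ is false, already in the interior. Take the Dirichlet eigenfunction $u_\lambda(r,\theta)=J_m(j_{m,1}r)\sin(m\theta)$ on the unit disk with $\lambda=j_{m,1}^2\asymp m^2$; it vanishes to order $m\asymp\sqrt\lambda$ at the origin, so its doubling index there is $\asymp\sqrt\lambda$ at \emph{every} scale, including the wavelength scale. The best uniform bound one has is $N(v,B)\le C\sqrt\lambda$ (Donnelly--Fefferman in the interior; Lemma~\ref{l:DF} of the paper near the boundary). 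Feeding this into your summation over $\asymp\lambda^{(n+1)/2}$ balls yields at best $C\sqrt\lambda\cdot\lambda^{(n+1)/2}\cdot\lambda^{-n/2}=C\lambda$, off by a factor $\sqrt\lambda$.

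The paper's fix is to reverse the roles of ``many balls'' and ``large doubling'': cover $\overline\Omega\times(-1,1)$ by a \emph{fixed}, $\lambda$-independent number of balls of radius $\asymp r_0(\Omega)$, on each of which $N_h\le C\sqrt\lambda$ by Lemma~\ref{l:DF}, and then invoke a nodal estimate that is \emph{linear} in the doubling index, $\h^{n}(Z(h)\cap B)\le C\,(N_h+1)\,r^{n}$. This linear dependence (Theorem~\ref{th:m2} for boundary balls, Lemma~\ref{l:in} for interior ones) is the whole point; it is what converts the $\sqrt\lambda$ in the doubling bound directly into the $\sqrt\lambda$ in \eqref{eq:main}. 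Proving it near a merely $C^1$ boundary is indeed the hard part, but the route is not flattening plus Carleman: the paper establishes an almost-monotonicity of the doubling index in Lipschitz domains with small constant (Lemma~\ref{l:mon}) and two hyperplane lemmas (Lemmas~\ref{l:hp} and~\ref{l:hp1}) that feed an induction on scales. So you have located the difficulty correctly, but the strategy of working at the wavelength scale with bounded doubling cannot close; the missing idea is the linear-in-$N$ nodal bound at unit scale.
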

The lower bound
\[\h^{n-1}(Z(u_\lambda))\ge c\sqrt{\lambda},\]
for sufficiently large $\lambda$, follows from the results of Donnelly and Fefferman in \cite{DF} combined with Lemma \ref{l:DF} below. We remark that this bound also holds for any solution of the equation $\Delta u_\lambda+ \lambda u_\lambda=0$ and the boundary condition plays no role. This follows from the fact that the zero set is $C\lambda^{-1/2}$ dense and a non-trivial result of \cite{L2}. The inequality \eqref{eq:main} was also proved by Donnelly and Fefferman  in \cite{DF1} for the case of real analytic boundary $\partial\Omega$. Their result was generalized to eigenfunctions of elliptic operators with real analytic coefficients by Kukavica \cite{K}. Similar estimates were recently obtained by Lin and Zhu \cite{LZ} for eigenfunctions of the bi-Laplace operator with various boundary conditions under the assumption that the boundary is real analytic. Also, the polynomial (in the eigenvalue) upper bounds for the area of the zero set of the Dirichlet, Neumann, and Robin eigenfunctions in smooth bounded domains in $\R^n$ were proved by Zhu in \cite{Z}.  

Our proof of Theorem \ref{th:main} is based on the results of Donnelly and Fefferman and the ideas developed in \cite{LM0, L1,L2}. In particular, we reduce the statement of the theorem to an estimate of the size of the nodal set of a harmonic function with controlled doubling index (the doubling index in defined in Section \ref{s:double} below). The novelty of the current work is the treatment of domains with non-analytic boundaries. More precisely, we work with Lipschitz domains in the Euclidean space and assume that (locally) the Lipschitz constant is small enough; the precise definition and the formulation of the main result are given in the next  section. This class of domains was recently considered by Tolsa \cite{T} in a different problem.  

The rest of the article is organized in the following way.  In Section \ref{s:double} we first discuss the doubling index of harmonic functions and its (weak) monotonicity properties near the boundary of Lipschitz domains with small Lipschitz constant, and  then we formulate the main estimate for the size of the zero set of harmonic functions in terms of the doubling index, see Theorem \ref{th:m2} below. Two auxiliary results are contained in Section \ref{s:two}, where the low regularity of the boundary requires some careful considerations. We prove Theorem \ref{th:m2} for harmonic functions in Section \ref{s:main}, and explain how Theorem \ref{th:main} follows from Theorem \ref{th:m2} in Section \ref{s:eigen}. 


{\bf{Acknowledgements.}}The authors are grateful to Misha Sodin for constant encouragement and motivation,  as well as for the organization of research visits and workshops that allowed the authors to meet and discuss the current work, including the workshop on nodal sets of eigenfunctions at IAS, Princeton, in February 2017. Our special thanks also go to the Tel Aviv University, where this work started.  A.L. was supported in part by the Packard
Fellowship and Sloan Fellowship. E.M. was partially supported by NSF grant
DMS-1956294 and by Research Council of Norway, Project 275113. F.N.
was partially supported by NSF grant DMS-1900008.


\section{Preliminaries}\label{s:pre}
\subsection{Smoothness of the boundary} Some of the tools used in the current paper should be compared to those in \cite{T}, where the following {\it boundary uniqueness conjecture} is studied.

{\it Let $h$ be a bounded harmonic function in a Lipschitz domain $\Omega$. Assume that $h$ vanishes on a relatively open set $U\subset\partial\Omega$  and $\nabla h$ vanishes on a subset of $U$ of positive surface measure. Then $h=0$.}

 Recently Xavier Tolsa  verified the conjecture   for Lipschitz domains with  small Lipschitz constant, see \cite{T}. We use the following definition.
 
\begin{definition}\label{d:Lip}
 Let $\Omega$ be a domain in $\R^d$, $\tau\in(0,1)$, and let $B=B(x,r)$ be a ball centered on $\partial\Omega$. We say that $\partial\Omega$ is $\tau$-Lipschitz  in $B$ if there is an isometry $T:\R^d\to\R^d$ and a function $f: B^{d-1}(0,r)\to \R$ such that $T(0)=x$, $f$ is a Lipschitz function with the Lipschitz constant bounded by $\tau$, $f(0)=0$, and
\[\Omega\cap B=T\left(\{(y', y'')\in B^{d}(0,r)\subset \R^{d-1}\times\R: y''>f(y')\}\right).\] 
In this case we write $\partial\Omega\cap B\in Lip(\tau)$. 
\end{definition}

\begin{remark}\label{r:1}
Our considerations are mostly local. When considering the part of the boundary $\partial\Omega\cap B\in Lip(\tau)$, we choose local coordinates in $B=B(x,r)$, $x\in\partial\Omega$, so that the isometry $T$ in the definition is the identity. We denote by $e_d$ the unit vector in the direction of the last coordinate, so that $x+\varepsilon e_d\in\Omega$ for $0<\varepsilon<r$. 
\end{remark}

\begin{remark}\label{r:2}
Note  that if $\partial\Omega\cap B\in Lip(\tau)$ and $B_1\subset B$ is a ball centered on $\partial\Omega$, then $\partial\Omega\cap B_1\in Lip(\tau)$. Also, rescaling does not change the Lipschitz constant. So if $\partial\Omega\cap B\in Lip(\tau)$ and $x\in\partial\Omega$ is the center of $B$, then, denoting  $\Omega_c=\{x+c(y-x): y\in\Omega\}$ and $B_c=cB=\{x+c(y-x): y\in B\}$ for some $c>0$, we have $\partial\Omega_c\cap B_c\in Lip(\tau)$.  
\end{remark}

\begin{definition}
We say that $\Omega$ is a Lipschitz domain with  local Lipschitz constant $\tau$ if there exists $r>0$ such that $\partial\Omega\cap B(x,r)\in Lip(\tau)$ for any $x\in\partial\Omega$.
\end{definition}
Clearly, any bounded $C^1$ domain is a domain with local Lipschitz constant $\tau$ for any positive $\tau$. So  Theorem \ref{th:main} follows from the next result.

\begin{theoremA}
 For each $n$, there exists $\tau_n>0$ such that the following statement holds. Let $\Omega$ be a bounded Lipschitz domain in $\R^n$ with local Lipschitz constant $\tau_n$ and let
 $u_\lambda$ be an  eigenfunction of the Laplace operator in $\Omega$ with the Dirichlet boundary condition, $\Delta u_\lambda+\lambda u_\lambda=0$ and $u_\lambda|_{\partial\Omega}=0$.   Then
\begin{equation*} \h^{n-1}(Z(u_\lambda))\le C\sqrt{\lambda},\end{equation*}
where $C$ depends only on  $\Omega$. 
\end{theoremA}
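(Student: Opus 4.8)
The plan is to carry out the Donnelly--Fefferman lifting, which turns the eigenfunction $u_\lambda$ into a harmonic function in one more dimension, and then to invoke Theorem~\ref{th:m2} on the natural scale $\lambda^{-1/2}$. I would pass to the bounded cylinder $\widetilde\Omega=\Omega\times(-1,1)\subset\R^{n+1}$ (its height is immaterial, as all the estimates are local) and set
\[
h(x,t)=u_\lambda(x)\,e^{\sqrt\lambda\,t}.
\]
Then $h$ is harmonic in $\widetilde\Omega$, it vanishes on the lateral boundary $\partial\Omega\times(-1,1)$, and, because $e^{\sqrt\lambda t}>0$, one has $Z(h)=Z(u_\lambda)\times(-1,1)$. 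The geometric observation that makes this useful is: if $\partial\Omega\cap B(x,r)\in Lip(\tau_n)$ in $\R^n$, then $\partial\Omega\times\R$ is $\tau_n$-Lipschitz in $\R^{n+1}$ as well, since the graph of a $\tau_n$-Lipschitz function over $\R^{n-1}$, regarded as a graph over $\R^{n-1}\times\R$ that is independent of the extra variable, has the same Lipschitz constant. Hence, once $\tau_n$ is chosen small enough that the hypotheses of Theorem~\ref{th:m2} are met in dimension $n+1$, that theorem is available for $h$ on every ball centered on $\partial\Omega\times(-1,1)$; the ``corners'' $\partial\Omega\times\{\pm1\}$ play no role, since the balls used below are centered at height $t=0$ and have small radius.

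The second step is a uniform bound on the doubling index of $h$. Let $r_0=r_0(\Omega)>0$ be a scale on which $\partial\Omega$ is $\tau_n$-Lipschitz, and set $\rho=\min\{\lambda^{-1/2},r_0\}$, so that $\lambda\rho^2\le1$. Combining the classical doubling inequality for Dirichlet eigenfunctions (Donnelly--Fefferman \cite{DF}; cf.\ Lemma~\ref{l:DF}) with the weak monotonicity of the doubling index near the boundary of Lipschitz domains with small Lipschitz constant developed in Section~\ref{s:double}, one obtains a constant $N_0=N_0(n)$ such that the doubling index of $h$ on any ball $B\subset\R^{n+1}$ of radius $\le\rho$ centered in $\overline\Omega\times\{0\}$ is at most $N_0$ (for balls meeting the boundary this is the boundary doubling index of Section~\ref{s:double}). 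The reason: rescaling such a ball to unit size replaces $\lambda$ by $\lambda\rho^2=O(1)$, so the interior doubling index is $O(1)$; the factor $e^{\sqrt\lambda t}$ changes by at most a bounded multiplicative constant across $B$, hence does not spoil the estimate; and the boundary balls are reduced to the interior case by the monotonicity, using the scale invariance of the Lipschitz constant (Remark~\ref{r:2}).

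The last step is the covering argument. Cover $\overline\Omega$ by $\le C(\Omega)\rho^{-n}$ balls $B_j'=B(x_j,\rho)$, $x_j\in\overline\Omega$, with bounded overlap, and for each $j$ apply Theorem~\ref{th:m2} to $h$ on $B_j=B\big((x_j,0),2\rho\big)\subset\R^{n+1}$, on which the doubling index is $\le N_0$, obtaining $\h^n(Z(h)\cap B_j)\le C(n)\rho^n$. Since $Z(h)\cap B_j\supset\big(Z(u_\lambda)\cap B_j'\big)\times(-\rho,\rho)$ and $Z(h)$ is a cylinder over $Z(u_\lambda)$ in the $t$-direction, the product formula for Hausdorff measure gives
\[
c(n)\,\rho\,\h^{n-1}\big(Z(u_\lambda)\cap B_j'\big)\le\h^n\big(Z(h)\cap B_j\big)\le C(n)\,\rho^n,
\]
so $\h^{n-1}\big(Z(u_\lambda)\cap B_j'\big)\le C(n)\rho^{n-1}$; summing over the $\le C(\Omega)\rho^{-n}$ balls yields $\h^{n-1}(Z(u_\lambda))\le C(\Omega)\rho^{-1}=C(\Omega)\max\{\sqrt\lambda,\,r_0^{-1}\}$, and since $\lambda$ is bounded below by the first Dirichlet eigenvalue $\lambda_1(\Omega)>0$ this is $\le C(\Omega)\sqrt\lambda$.

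I expect the genuine obstacle to be the uniform bound on the doubling index \emph{up to the boundary}. In the interior it is, after rescaling, just the standard frequency/doubling estimate for eigenfunctions; but along a merely Lipschitz, non-analytic portion of $\partial\Omega$ the boundary cannot be straightened and the doubling index is only weakly monotone, so controlling it is precisely what forces the Lipschitz constant to be small and requires the auxiliary results of Section~\ref{s:two} as well as the boundary analysis of Section~\ref{s:double}. A lesser technical matter is making the covering uniform in $\Omega$ (the number of balls, their overlap, and the fact that every $B_j'$ of radius $\rho\le r_0$ lies in a chart where $\partial\Omega$ is $\tau_n$-Lipschitz), which is routine.
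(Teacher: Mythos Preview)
Your overall strategy---lift $u_\lambda$ to the harmonic $h(x,t)=u_\lambda(x)e^{\sqrt\lambda t}$, bound its doubling index, and feed this into Theorem~\ref{th:m2}---is exactly the paper's, but the second step contains a real error. You claim that on balls of radius $\rho\sim\lambda^{-1/2}$ the doubling index of $h$ is bounded by a dimensional constant $N_0(n)$, arguing that ``rescaling replaces $\lambda$ by $\lambda\rho^2=O(1)$''. Rescaling does nothing of the sort: the doubling index is scale-invariant for harmonic functions, and a harmonic function on the unit ball can have arbitrarily large doubling index (think of homogeneous harmonic polynomials of high degree). Concretely, on the unit disk the Dirichlet eigenfunction $J_m(j_{m,1}r)\cos m\theta$ has $\lambda\sim m^2$ and vanishes to order $m$ at the origin, so the doubling index of its harmonic lift at the origin is $\gtrsim m\sim\sqrt\lambda$ at \emph{every} scale, including $\lambda^{-1/2}$. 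Lemma~\ref{l:DF} gives only $N_h\le C\sqrt\lambda$, and that is sharp. With the correct bound $N_h\le C\sqrt\lambda$ plugged into your covering at scale $\rho=\lambda^{-1/2}$, each of the $\sim\rho^{-n}$ balls contributes $C\sqrt\lambda\,\rho^{n-1}$ to $\h^{n-1}(Z(u_\lambda))$, and the sum is $C\sqrt\lambda\,\rho^{-1}\sim C\lambda$, one power of $\sqrt\lambda$ too many.

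The paper avoids this by covering $\Omega_0\times[-1,1]$ with a \emph{fixed} (i.e., $\lambda$-independent) number of balls of radius comparable to $r_0$. On each such ball Lemma~\ref{l:DF} gives $N_h\le C(\Omega_0)\sqrt\lambda$, Theorem~\ref{th:m2} (or Lemma~\ref{l:in} for interior balls) then yields $\h^n(Z(h)\cap B)\le C\sqrt\lambda\,r_0^n$, and summing over finitely many balls gives $\h^n(Z(h)\cap(\Omega_0\times[-1,1]))\le C\sqrt\lambda$; dividing by the fixed $t$-length $2$ finishes the proof. The point is that the $\sqrt\lambda$ must enter through the doubling index in Theorem~\ref{th:m2}, not through the number of balls in the cover.
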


The constant $C$ depends only on the parameter $r$ for $\Omega$ in the definition of a Lipschitz domain with local Lipschitz constant $\tau_n$, on the diameter of $\Omega$, and on the dimension $n$.
In what follows we assume that the dimension of the ambient Euclidean space is fixed so usually we will not emphasize the dependence of our constants on it.

The rest of the article is devoted to a proof of Theorem ${1'}$.  We start with the following property of Lipschitz domains.
\begin{lemma}\label{l:star} Suppose that $\partial\Omega\cap B\in Lip(\tau)$, $
\tau<1/4$, where $B=B(x,r)$ and $x\in\partial\Omega$. We choose coordinates as in Remark \ref{r:1}. Let $x_0\in \overline{\Omega}\cap \frac{1}{4}B$, and let $x_1=x_0+\tau r e_d$. Then $\Omega\cap B(x_1, r/2)$ is star-shaped with respect to $x_1$.
\end{lemma}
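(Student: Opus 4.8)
The plan is to show that for every point $y \in \Omega \cap B(x_1, r/2)$, the entire segment $[x_1, y]$ lies in $\Omega$, which is the definition of star-shapedness with respect to $x_1$. First I would set up coordinates as in Remark \ref{r:1}, so that $\Omega \cap B = \{(y', y'') : y'' > f(y')\}$ with $f$ being $\tau$-Lipschitz and $f(0) = 0$. Since $x_0 \in \overline{\Omega} \cap \tfrac14 B$, writing $x_0 = (x_0', x_0'')$ we have $x_0'' \ge f(x_0')$, and hence $x_1 = (x_0', x_0'' + \tau r)$ satisfies $x_1'' \ge f(x_0') + \tau r$. The key quantitative point is that the "vertical margin" $\tau r$ by which we lifted $x_0$ is exactly enough to dominate the oscillation of $f$ over the relevant horizontal scale: for any $z'$ with $|z' - x_0'| \le r$ we have $|f(z') - f(x_0')| \le \tau |z' - x_0'| \le \tau r$, so $x_1'' \ge f(z')$ for all such $z'$; in other words $x_1$ lies above (or on) the graph of $f$ with room to spare.

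Next I would verify that the whole segment $[x_1, y]$ stays inside $B$ (so that the local description of $\partial\Omega$ applies) and stays in $\Omega$. For the containment in $B$: since $x_0 \in \tfrac14 B$ and $\tau < 1/4$, the point $x_1 = x_0 + \tau r e_d$ lies in, say, $\tfrac12 B$, and $y \in B(x_1, r/2) \subset B$; because balls are convex, $[x_1,y] \subset B$. For the containment in $\Omega$: parametrize the segment as $z(t) = (1-t)x_1 + t y = (z'(t), z''(t))$ for $t \in [0,1]$, and note $z'(t) = (1-t)x_0' + t y'$ so that $z''(t) = (1-t)x_1'' + t y''$. Since $y \in \Omega$ we have $y'' > f(y')$, and by the margin estimate above $x_1'' \ge f(z'(t))$ whenever $|z'(t) - x_0'| \le r$. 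One checks $|z'(t) - x_0'| = t|y' - x_0'| \le |y - x_1| + |x_1 - x_0| < r/2 + \tau r < r$, so the margin bound is available at every $t$. Then using convexity of $f$ along the segment $[x_0', y']$ — more precisely $f(z'(t)) \le (1-t)f(x_0') + t f(y') + (\text{Lipschitz error})$, or more directly the chord estimate $f(z'(t)) \le \max(f(x_0'), f(y')) + \tau\,\mathrm{diam}$, but cleanest is $f(z'(t)) \le (1-t) f(z'(t)) + t f(z'(t))$ split as below — I would estimate $f(z'(t)) \le (1-t)(f(x_0') + \tau r) + t f(y')$ using $f(z'(t)) - f(x_0') \le \tau|z'(t)-x_0'| = t\tau|y'-x_0'|$ and a similar bound from $y'$, and compare with $z''(t) = (1-t)x_1'' + t y'' \ge (1-t)(f(x_0') + \tau r) + t y'' > (1-t)(f(x_0')+\tau r) + t f(y') \ge f(z'(t))$. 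Hence $z(t) \in \Omega$ for all $t$, proving the claim.

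The main obstacle is bookkeeping rather than depth: one must make sure that at every interior point of the segment the horizontal displacement from $x_0'$ never exceeds the radius $r$ over which $f$ is defined and $\tau$-Lipschitz, and that the vertical lift $\tau r$ genuinely dominates the worst-case oscillation of $f$; this is where the hypotheses $\tau < 1/4$ and $x_0 \in \tfrac14 B$ are used (they leave enough slack so that $r/2 + \tau r < r$ and $x_1 \in \tfrac12 B$). A secondary subtlety is the boundary case: if $y \in \partial\Omega \cap B(x_1, r/2)$ one wants $[x_1, y) \subset \Omega$, which follows from the strict inequality $z''(t) > f(z'(t))$ for $t < 1$ obtained above since $x_1'' \ge f(z'(t))$ and we may arrange $x_1''$ strictly above by noting $x_0'' + \tau r > f(x_0')$ already when $x_0 \in \overline\Omega$, or simply observe the inequality $z''(t) > f(z'(t))$ is strict for $t\in(0,1]$ because of the $ty''>tf(y')$ term; this is enough for star-shapedness of the open set $\Omega \cap B(x_1, r/2)$.
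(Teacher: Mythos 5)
Your proposal is correct and follows essentially the same route as the paper: parametrize the segment as a convex combination, use the vertical lift $x_1''\ge f(x_0')+\tau r$ together with the $\tau$-Lipschitz bound of $f$ over a horizontal displacement of at most $r/2$, and conclude $z''(t)>f(z'(t))$. The one bookkeeping point to tighten is that you should bound $|y'-x_0'|=|y'-x_1'|\le |y-x_1|<r/2$ directly (projection is a contraction), since the weaker estimate $<r/2+\tau r$ you state is not quite enough to close the final convex-combination inequality for $t$ near $1$.
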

A version of this lemma can be found in \cite{KN}. We provide a proof for the convenience of the reader.

\begin{proof} 
Let $x_1=(x_1',x_1'')$. Suppose that $x_2=(x_2',x_2'')\in \Omega\cap B(x_1,r/2)$.  Let now $x_3=(x_3',x_3'')$ be a point on the interval $(x_1,x_2)$. Clearly $x_3\in B(x_1, r/2)$. We want to check that $x_3''>f(x_3')$.

Let $x_3=ax_1+(1-a)x_2$, $a\in(0,1)$.  We have  $x_1''\ge f(x_1')+\tau r$ and $x_2''>f(x_2')$.  Therefore, we obtain
\[x_3''=ax_1''+(1-a)x_2''> af(x_1')+a\tau r+(1-a)f(x_2').\]
Then, since $f(x_1')\ge f(x_2')-\tau |x_1'-x_2'|>f(x_2')-\tau r/2$, we have
\[x_3''>f(x_2')+a\tau r-\frac{a\tau r}{2}=f(x_2')+\frac{a\tau r}{2}>f(x_3'),\]
where the last inequality holds since $|x_3'-x_2'|=a|x_1'-x_2'|<ar/2$ and $f$ is $\tau$-Lipschitz.
\end{proof}


\subsection{Some observations} 
In this section we recall some results about harmonic functions. 

Suppose that $h$ is a harmonic function in $\Omega$, $h\in C(\overline{\Omega})$, and $h=0$ on $\partial\Omega\cap B$, where $B=B(x,r)$ and $x\in\overline{\Omega}$. We define the function $v$ in $B$ by  $v=h^2$ in $\Omega\cap B$, and $v=0$ in $B\setminus \Omega$.  Then $v$ is  subharmonic in $B$ and the mean-value theorem implies that for any $y\in B(x,r/2)\cap\Omega$,
\begin{equation}\label{eq:subh}
h^2(y)\le \frac{1}{|B(y,r/2)|}\int_{B(y,r/2)\cap\Omega}h^2\le \frac{1}{|B(y,r/2)|}\int_{B(x,r)\cap\Omega}h^2,
\end{equation}
where $|E|$ is the $d$-dimensional Lebesgue measure of the set $E$.

Another known fact that we use is the following quantitative version of the Cau\-chy uniqueness theorem.  

\begin{lemma}\label{l:Cauchy}  Let $B_+$ be the  half-ball, 
\[B_+=\{(x',x'')\in\R^{d-1}\times\R: |x'|^2+(x'')^2< 1, x''> 0\}.\]
There exist  $\gamma\in(0,1)$ and $C>0$ such that if $h$ is harmonic in $B_+$, $h\in C^1(\overline{B}_+)$ and satisfies the inequalities $|h|\le 1, |\nabla h|\le 1$ in $B_+$ and $|h|\le\varepsilon$, $|\partial_d h|\le \varepsilon$ on $\Gamma=\{(x',x'')\in \overline{B}_+, x''=0\}$, $\varepsilon\le 1$, then 
\[|h(x)|\le C\varepsilon^\gamma\quad{\text{when}}\quad x\in\frac{1}{3}B_+=\left\{(x',x''):|x'|^2+(x'')^2< \frac1{9}, x''> 0\right\}.\] 
\end{lemma}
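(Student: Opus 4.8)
The plan is to remove the flat boundary $\Gamma$ by an \emph{even reflection}, which turns $h$ into a function that is harmonic except for a single-layer defect of size $O(\varepsilon)$ on $\Gamma$, to subtract off the corresponding Newtonian potential so as to land on a genuinely harmonic function on a full ball, and then to run a propagation-of-smallness argument for that harmonic function. Concretely, set $B=B(0,1)$ and extend $h$ by $\widetilde h(x',x''):=h(x',|x''|)$. Since $h\in C^1(\overline{B}_+)$, the function $\widetilde h$ is Lipschitz on $B$, harmonic off $\Gamma$, and a Green's-formula computation shows that, in the sense of distributions on $B$, $\Delta\widetilde h=\mu$, where $\mu$ is the measure carried by $\Gamma$ with density $2(\partial_d h)(\cdot,0)$ with respect to $\h^{d-1}$ (the jump of the normal derivative of the even reflection); thus $\mu$ has density bounded by $2\varepsilon$. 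Let $w$ be a Newtonian potential of $\mu$, so $\Delta w=\mu$; since the integral of the Newtonian kernel over a bounded piece of a hyperplane converges, $w$ is continuous with $|w|\le C_d\varepsilon$ on $\tfrac34 B$, and, the kernel being radial and $\mu$ living on $\{x''=0\}$, $w$ is even in $x''$. Hence $H:=\widetilde h-w$ is harmonic in $B$ (the singular parts cancel), even in $x''$, bounded by $1+C_d\varepsilon\le 2$ on $\tfrac34 B$, bounded by $(1+C_d)\varepsilon$ on $\Gamma_{1/2}:=\Gamma\cap\{|x'|<1/2\}$ (using $|h|\le\varepsilon$ on $\Gamma$), and, by evenness, $\partial_d H=0$ on $\Gamma$. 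It therefore suffices to prove: \emph{there are $\gamma\in(0,1)$ and $C$ so that any $H$ harmonic in $\tfrac34 B$, even in $x''$, with $|H|\le A$ in $\tfrac34 B$ and $|H|\le\delta$ on $\Gamma_{1/2}$, satisfies $|H|\le CA^{1-\gamma}\delta^\gamma$ on $B(0,1/3)$}; applying this with $A=2$, $\delta=(1+C_d)\varepsilon$ and adding back $|w|\le C_d\varepsilon\le C_d\varepsilon^\gamma$ gives $|h|=|\widetilde h|\le C\varepsilon^\gamma$ on $\tfrac13 B_+\subset B(0,1/3)$. The evenness is essential here: $H(x)=x''$ is harmonic and vanishes on $\Gamma$ but not inside, so the statement fails without it.

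To prove the reduced statement I would use analytic continuation. Since $H$ is harmonic and bounded by $A$ on $\tfrac34 B$, the Poisson representation on balls of radius $<1/12$ shows that $H$ extends to a holomorphic function of $z=x+iy\in\C^d$ on a complex neighbourhood of $B(0,2/3)$ of the form $U=\{|x|<2/3,|y|<c_0\}$, with $|H|\le C_dA$ on $U$. Restricting to $z''=0$ gives $a(z'):=H(z',0)$, holomorphic on a complex neighbourhood of $B^{d-1}(0,2/3)$, with $|a|\le C_dA$ there and $|a|\le\delta$ on the \emph{real} disk $\{|x'|<1/2,y'=0\}$. Now propagate the smallness of $a$ into the complex domain one variable at a time: a holomorphic function of one variable that is bounded by $M$ on a disk and by $\delta$ on a real subsegment through the centre is bounded by $M^{1-\beta}\delta^\beta$ on a smaller concentric complex disk, by the two-constants theorem (the harmonic measure of the segment is bounded below near it); iterating over the $d-1$ variables yields $|a|\le C(C_dA)^{1-\gamma_1}\delta^{\gamma_1}$ on a complex polydisk $P$ of some fixed radius $\rho_0$ around $0$. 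Finally, harmonicity and evenness give $H(x',x'')=\sum_{k\ge0}\frac{(-1)^k(x'')^{2k}}{(2k)!}(\Delta_{x'})^k a(x')$, where $\Delta_{x'}$ is the $(d-1)$-dimensional Laplacian; Cauchy estimates on $P$ bound $|(\Delta_{x'})^k a(0)|\le C(2k)!\,\rho_0^{-2k}(C_dA)^{1-\gamma_1}\delta^{\gamma_1}$, and summing the series for $|x'|,|x''|<\rho_0/2$ produces a ball $B(0,\sigma)$, with $\sigma$ fixed, on which $|H|\le C(C_dA)^{1-\gamma_1}\delta^{\gamma_1}$. A standard chain of three-ball inequalities inside $\tfrac34 B$ then upgrades this to the claimed bound on $B(0,1/3)$, with a possibly smaller exponent $\gamma$.

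The main obstacle is the complex propagation-of-smallness step: transferring the smallness of $a$ from a real disk to a genuine complex neighbourhood of a smaller disk, and then converting it, via the even power series, into smallness of $H$ on a solid ball. This is precisely where the hypothesis $|\partial_d h|\le\varepsilon$ on $\Gamma$ enters — it is what permits the even reflection and, in the end, the vanishing of $\partial_d H$ on $\Gamma$ — and without it the conclusion is false. An alternative, avoiding complex analysis, is to apply a Carleman estimate for $\Delta$ to $\chi\widetilde h$, where $\chi$ is a cutoff supported in $\tfrac34 B$ and equal to $1$ near $B(0,1/3)$: the forcing $\Delta(\chi\widetilde h)$ splits into the $\varepsilon$-sized layer on $\Gamma$ together with a term supported in the transition region and bounded by $A$, and optimising the Carleman weight reproduces the exponent $\gamma$; this requires a Carleman estimate valid up to the flat boundary $\Gamma$, which is available but somewhat more delicate to set up in this low-regularity setting.
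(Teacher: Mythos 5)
Your reduction is sound: even reflection $\widetilde h(x',x'')=h(x',|x''|)$ produces a jump $[\partial_d\widetilde h]=2\partial_d h$ across $\Gamma$, so $\Delta\widetilde h=2\,\partial_d h\,\h^{d-1}\lfloor\Gamma$, the single-layer potential $w$ of this measure is continuous, even in $x''$, and $O(\varepsilon)$, and $H=\widetilde h-w$ is harmonic in $B$, even, bounded, and $O(\varepsilon)$ on $\Gamma$; the observation that evenness is indispensable ($H=x''$ would otherwise be a counterexample) is exactly right. Where your route diverges sharply from the paper is the propagation step. You are left with smallness of $H$ only on the codimension-one set $\Gamma$, and to escape it you bring in holomorphic extension, an iterated two-constants estimate in each of the $d-1$ tangential variables, the even power series $H=\sum_k\frac{(-1)^k(x'')^{2k}}{(2k)!}(\Delta_{x'})^k a$, and Cauchy estimates for $(\Delta_{x'})^k a$; this does work, but it is heavy machinery, and several details (the combinatorial growth of $\sum_{|\beta|=k}\binom{k}{\beta}(2\beta)!$ in the Cauchy bound, the bookkeeping of which real slice stays inside $\Gamma_{1/2}$ at each step of the two-constants iteration) have to be tracked with care. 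The paper instead applies Green's representation of $h$ on $B_+$ and splits $h=h_1+h_2$ according to whether the boundary integral runs over the flat part $\Gamma$ or the hemisphere $\Sigma$. Because both $|h|$ and $|\partial_n h|$ are $\le\varepsilon$ on $\Gamma$ and the kernels are integrable there, $|h_1|\le C\varepsilon$ throughout $B\setminus\Gamma$; meanwhile $h_2$ is harmonic in the whole ball $B$, and for $x$ in the reflected half-ball $B_-$ Green's formula gives $h_1(x)+h_2(x)=0$, hence $|h_2|\le C\varepsilon$ on the \emph{open} set $B_-$. Propagating smallness from an open set is just a bounded chain of the interior three-ball inequality \eqref{eq:threeh}; no complex analysis is needed. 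So the decisive difference is that the paper subtracts the \emph{full} Cauchy-data potential (double layer of $h$ plus single layer of $\partial_n h$), which produces a harmonic companion small on a full-dimensional set, whereas your subtraction of only the single-layer piece after reflection leaves you small only on a hypersurface and forces the analytic-continuation detour. You could in fact finish your reduced statement by the paper's route as well: $H$ is $O(\varepsilon)$ on $\Gamma$ and $\partial_d H=0$ there by evenness, so Green's formula on $\tfrac34 B_+$ splits $H=H_1+H_2$ with $H_1=O(\varepsilon)$ and $H_2$ harmonic in $\tfrac34 B$ and $O(\varepsilon)$ on $\tfrac34 B_-$, after which three balls suffice.
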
 

 The reader can find  a proof of a similar statement in \cite{L} and a general result on second order elliptic PDEs in Lipschitz domains in \cite{ARRV}.  A simple proof  is also given in Section \ref{ss:cau} for the convenience of the reader.

\section{The doubling index}\label{s:double}
\subsection{The doubling index inside the domain} 
Let $h\in C(\overline{\Omega})$ be a non-zero harmonic function in a domain $\Omega\subset\R^d$. For each $x\in\overline{\Omega}$ and  $r>0$, we define \begin{equation}\label{eq:double}
H_h(x,r)=\int_{B(x,r)\cap\Omega}h^2\quad {\text{and}}\quad N_h(x,r)=\log\frac{H_h(x,2r)}{H_h(x,r)},
\end{equation}
and, with some abuse of language, we call $N_h(x,r)$ the doubling index of $h$ in $B=B(x,r)$. 

Assume first that $B(x,2R)\subset\Omega$, then 
\begin{equation}\label{eq:monh}
N_h(x,r)\le N_h(x,R),\quad{\text{when}}\quad r<R.
\end{equation} An elementary proof can be obtained by  decomposing $h$ into spherical harmonics, see, e.g., \cite{KM}.
This is a simple and useful result, its various versions go back to the works of Landis \cite{La}, Agmon \cite{Ag}, and Almgren \cite{Al}. 

Suppose that  $B(x,4r)\subset\Omega$. Then we rewrite the inequality $N_h(x,r)\le N_h(x,2r)$ as
\begin{equation}\label{eq:three2}
\left(\int_{B(x,2r)}h^2\right)^2\le \int_{B(x,r)}h^2\int_{B(x,4r)}h^2.
\end{equation}
Similarly to \eqref{eq:subh}, for any $y\in B(x, 3r/2)$,   we have
\[h^2(y)\le \frac{1}{|B(y,r/2)|}\int_{B(y,r/2)}h^2\le \frac{1}{|B(y,r/2)|}\int_{B(x,2r)}h^2.\]
Finally, applying \eqref{eq:three2} and  using the trivial bound of the $L^2$ norms by the $L^\infty$ norms, we obtain
\begin{equation}\label{eq:threeh}
\sup_{B(x,3r/2)}|h|\le 2^d(\sup_{B(x,r)}|h|)^{1/2}(\sup_{B(x,4r)}|h|)^{1/2}.
\end{equation}

\subsection{The doubling index on the boundary}
We need a version of the monotonicity formula \eqref{eq:monh} and the three ball inequality \eqref{eq:threeh} near a part of the boundary on which the harmonic  function vanishes.  
First, we recall a lemma that is  proven in \cite{KN}. 
\begin{lemma}[Kukavica, Nystr\"om]\label{l:starm} Let $\Omega$ be a domain in $\R^d$ and let $B_1$ be a ball centered on $\partial\Omega$ such that $\partial\Omega\cap B_1$ is $C^3$ smooth. Let also  $x\in\Omega$ be such that $\Omega\cap B(x,R)$ is star-shaped with respect to $x$, $B(x,R)\subset B_1$. Suppose that $h\in C(\overline{\Omega})$ is a non-zero harmonic function in $\Omega$ and $h=0$ on $\partial\Omega\cap B_1$.  Then
\begin{equation}\label{eq:KN}
 \log\frac{H_h(x, r_2)}{H_h(x,r_1)}\le \frac{\log (r_2/ r_1)}{\log (r_3/r_2)}\log \frac{H_h(x, r_3)}{H_h(x,r_2)},
 \end{equation}
when $0<r_1<r_2<r_3<R$. 
\end{lemma}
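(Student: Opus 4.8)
The plan is to derive \eqref{eq:KN} from the convexity of $t\mapsto\log H_h(x,e^t)$: with $\phi(t)=\log H_h(x,e^t)$ and $t_i=\log r_i$, \eqref{eq:KN} is exactly the three-point inequality $\phi(t_2)-\phi(t_1)\le\frac{t_2-t_1}{t_3-t_2}\big(\phi(t_3)-\phi(t_2)\big)$ satisfied by convex functions. After a translation we may take $x=0$; I write $\rho$ for the running radius (keeping $r_1<r_2<r_3$ reserved), set $\Omega_\rho=\Omega\cap B(0,\rho)$, and introduce the Almgren-type quantities
\[I(\rho)=\int_{\partial B(0,\rho)\cap\Omega}h^2\,d\sigma,\qquad D(\rho)=\int_{\Omega_\rho}|\nabla h|^2,\qquad N(\rho)=\frac{\rho\,D(\rho)}{I(\rho)}.\]
Because $\partial\Omega\cap B_1$ is $C^3$ and $h$ vanishes on it, elliptic boundary regularity makes $h$ of class $C^2$ up to $\partial\Omega$ locally inside $B_1$, so all boundary integrals below are legitimate; moreover $H_h(0,\rho)=\int_0^\rho I(s)\,ds$ by the coarea formula, and $\partial B(0,\rho)$ meets $\partial\Omega$ transversally for a.e.\ $\rho$, which lets us differentiate $I$ and $D$ for a.e.\ $\rho$.

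The first, purely ``interior'' step is the frequency identity. Using $\Delta h=0$ and $h=0$ on $\partial\Omega\cap B_1$, Green's formula on $\Omega_\rho$ gives $D(\rho)=\int_{\partial B(0,\rho)\cap\Omega}h\,\partial_\rho h\,d\sigma$; differentiating $I$ — where the variation of the region of integration again contributes nothing, since its moving part lies on $\partial\Omega$ and there $h=0$ — gives $I'(\rho)=\frac{d-1}{\rho}I(\rho)+2D(\rho)$, i.e. $\rho\frac{d}{d\rho}\log I(\rho)=(d-1)+2N(\rho)$. Thus $\log I(e^t)$ will be convex as soon as we know that $N$ is non-decreasing.

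Showing that $N$ is non-decreasing is where star-shapedness is used, and I expect this to be the main obstacle. Apply the Rellich–Pohozaev identity $\int_{\partial U}\langle z,\nu\rangle|\nabla h|^2\,d\sigma=(d-2)\int_U|\nabla h|^2+2\int_{\partial U}\langle z,\nabla h\rangle\,\partial_\nu h\,d\sigma$ (obtained by multiplying $\Delta h=0$ by $\langle z,\nabla h\rangle$ and integrating by parts) with $U=\Omega_\rho$. On the spherical piece $\partial B(0,\rho)\cap\Omega$ one has $\nu=z/\rho$, so it contributes $\rho\,D'(\rho)$ on the left and $2\rho\int_{\partial B(0,\rho)\cap\Omega}(\partial_\rho h)^2\,d\sigma$ on the right; on $\partial\Omega\cap B(0,\rho)$ the condition $h=0$ forces $\nabla h=(\partial_\nu h)\nu$, and after collecting terms one is left with
\[\rho\,D'(\rho)=(d-2)D(\rho)+2\rho\!\!\int_{\partial B(0,\rho)\cap\Omega}\!\!(\partial_\rho h)^2\,d\sigma+\int_{\partial\Omega\cap B(0,\rho)}\langle z,\nu\rangle(\partial_\nu h)^2\,d\sigma.\]
Since $\Omega\cap B(0,R)$ is star-shaped with respect to $0$, we have $\langle z,\nu\rangle\ge0$ at every $z\in\partial\Omega\cap B(0,R)$, so the last integral is non-negative and may be discarded. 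Feeding the resulting inequality for $D'$, the formula for $I'$, and the Cauchy–Schwarz bound $D(\rho)^2\le I(\rho)\int_{\partial B(0,\rho)\cap\Omega}(\partial_\rho h)^2\,d\sigma$ into $\frac{N'}{N}=\frac1\rho+\frac{D'}{D}-\frac{I'}{I}$, the explicit $1/\rho$ terms cancel and one obtains $\frac{N'(\rho)}{N(\rho)}\ge 2\big(\tfrac{1}{D(\rho)}\int_{\partial B(0,\rho)\cap\Omega}(\partial_\rho h)^2-\tfrac{D(\rho)}{I(\rho)}\big)\ge0$. This is the usual interior argument verbatim; the only new input is the sign of the $\partial\Omega$-term, supplied for free by star-shapedness — which is also why \eqref{eq:KN} carries no exponential correction factor. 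The real cost here is technical: justifying the integration by parts on $\Omega_\rho$, whose boundary has a codimension-two edge along $\partial B(0,\rho)\cap\partial\Omega$, and supplying the a.e.-$\rho$ transversality and absolute-continuity details behind the manipulations above.

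It remains to pass from $I$ to $H_h(0,\cdot)$. Put $g(s)=I(e^s)e^s$; then $\log g(s)=\log I(e^s)+s$ is convex (convex plus linear) and $(\log g)'(s)=d+2N(e^s)\ge d>0$, so $g$ is log-convex and integrable near $-\infty$, and $H_h(0,e^t)=\int_0^{e^t}I(\rho)\,d\rho=\int_{-\infty}^t g(s)\,ds=:G(t)$. Now use the elementary fact that a half-line integral of a log-convex function is log-convex: for $s\le t$ the tangent-line inequality $g(s)\ge g(t)\,e^{(\log g)'(t)(s-t)}$ integrates to $G(t)\ge g(t)/(\log g)'(t)$, i.e. $G(t)G''(t)-G'(t)^2=g(t)\big(G(t)(\log g)'(t)-g(t)\big)\ge0$, so $\log G(t)=\log H_h(0,e^t)$ is convex. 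By the first paragraph, this convexity is precisely \eqref{eq:KN}.
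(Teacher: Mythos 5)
Your argument is correct and is exactly the Almgren frequency-function proof (Rellich--Pohozaev identity plus star-shapedness to get monotonicity of $N$, then integration in $\log\rho$), which is precisely what the cited reference \cite{KN} does; the paper itself does not reprove the lemma but only points to \cite{KN} and remarks that the $C^3$ hypothesis makes the integrations by parts legitimate, which is the same technical caveat you flag at the end of your third paragraph. The only piece not in \cite{KN}'s phrasing is your clean tangent-line argument passing from log-convexity of $\rho\mapsto\rho I(\rho)$ to log-convexity of $H$, but this is a routine reformulation of the same monotonicity.
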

The assumption that the boundary of $\Omega$ is $C^3$ smooth implies that  $h\in C^2(\overline{\Omega}\cap B_1)$, so every integration by parts in \cite{KN} can be easily justified.

Now we prove the following almost monotonicity property of  the doubling index in Lipschitz domains. 

\begin{lemma}\label{l:mon} Let $\Omega$ be a domain in $\R^d$.   For any $\varepsilon>0$, there exists $\tau_\varepsilon>0$ such that if $\tau<\tau_\varepsilon$, $\partial\Omega\cap B\in Lip(\tau)$, where $B=B(x,R),\ x\in\partial\Omega$, and $h\in C(\overline{\Omega})$  is a non-zero harmonic function in $\Omega$, $h=0$ on $\partial\Omega\cap B$, then
 \begin{equation}\label{eq:monLip}
 N_h(x_0,r)\le (1+\varepsilon) N_h(x_0,2r),
 \end{equation}
 for any  $x_0\in \overline{\Omega}\cap \frac{1}{4}B$ and $r<R/16$.
 \end{lemma}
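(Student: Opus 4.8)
The plan is to reduce the Lipschitz case to the $C^3$ case handled by Lemma~\ref{l:starm} via a two-step approximation: first replace the point $x_0$ on or near the boundary by an interior point $x_1$ from which a suitable neighborhood is star-shaped (this is exactly what Lemma~\ref{l:star} provides), and then replace the Lipschitz graph by a smooth graph lying slightly inside $\Omega$, so that the genuine monotonicity estimate \eqref{eq:KN} applies to the smooth domain and can be transferred back with only a small multiplicative loss controlled by $\tau$. The point is that all the radii that enter $N_h$ get comparable, up to factors $1+O(\tau)$, under these perturbations, and $\tau_\varepsilon$ will be chosen so small that all such factors combine to at most $1+\varepsilon$.

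Here are the steps in order. First, fix $x_0\in\overline\Omega\cap\frac14 B$ and $r<R/16$, and set $x_1=x_0+\tau R e_d$; by Lemma~\ref{l:star} (applicable since we will take $\tau<1/4$), $\Omega\cap B(x_1,R/2)$ is star-shaped with respect to $x_1$. Next, smooth the boundary: using that $\partial\Omega\cap B\in Lip(\tau)$ with defining function $f$, replace $f$ by a mollification $f_\delta$ with $\|f-f_\delta\|_\infty$ as small as we like (say $\ll \tau r$) and Lipschitz constant still $\le 2\tau$, and shift it down so the resulting $C^\infty$ graph $\Gamma_\delta$ lies strictly below $\partial\Omega$ inside a neighborhood of $x_0$; let $\widetilde\Omega$ be the region above $\Gamma_\delta$, so $\Omega\cap B\subset\widetilde\Omega\cap B$ and the two domains agree up to a set of width $O(\tau r + \delta)$ near the boundary. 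The harmonic function $h$ does not extend to $\widetilde\Omega$, so instead I would apply Lemma~\ref{l:starm} not to $h$ itself but compare the quantities $H_h(x_1,\rho)$ for $\rho\in\{r,2r,4r\}$ (or a nearby geometric triple) directly: since $h$ vanishes on $\partial\Omega\cap B$ and $\Omega\cap B(x_1,R/2)$ is star-shaped, the hypotheses of Lemma~\ref{l:starm} hold \emph{for $\Omega$ itself once we know the star-shapedness}, but that lemma was stated for $C^3$ boundary only to justify integration by parts; the honest route is to run the Rellich--Nečas/Pohozaev identity underlying \eqref{eq:KN} for the Lipschitz domain $\Omega$ with vertex $x_1$, where the boundary term $\int_{\partial\Omega}(\langle y-x_1,\nu\rangle)|\partial_\nu h|^2$ has a sign up to an error proportional to the Lipschitz constant $\tau$ (because $\langle y-x_1,\nu\rangle\ge (\,\text{const}-C\tau)\,|y-x_1|$ on the star-shaped portion, using $x_1=x_0+\tau R e_d$). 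This yields a differential inequality for $\log H_h(x_1,\rho)$ whose "convexity defect" is $O(\tau)$, and integrating it between $r$ and $4r$ gives \eqref{eq:KN} with the right-hand side multiplied by $(1+O(\tau))$.

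Finally, I would pass from $x_1$ back to $x_0$ and from the three-radius inequality to the two-radius statement \eqref{eq:monLip}. Since $|x_1-x_0|=\tau R\le 16\tau r$, we have $B(x_0,r)\subset B(x_1,(1+16\tau)r)$ and $B(x_1,(1-16\tau)r)\subset B(x_0,r)$, etc., so $H_h(x_0,\rho)$ and $H_h(x_1,\rho)$ are sandwiched between $H_h$ of slightly dilated balls; combining this with the monotonicity-type inequality \eqref{eq:monh}/\eqref{eq:KN} already available for concentric balls, one converts the small shift in center into another $1+O(\tau)$ factor in the logarithmic doubling indices. Then \eqref{eq:KN} at $x_1$ with radii $(r,2r,4r)$ reads $N_h(x_1,r)\le \frac{\log 2}{\log 2}(1+O(\tau))N_h(x_1,2r) = (1+O(\tau))N_h(x_1,2r)$, and transferring back to $x_0$ gives \eqref{eq:monLip} provided $\tau<\tau_\varepsilon$ with $\tau_\varepsilon$ chosen so that all accumulated $1+O(\tau)$ factors multiply to at most $1+\varepsilon$.

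I expect the main obstacle to be the last technical point of the middle step: making the Rellich--Nečas identity rigorous on a merely Lipschitz domain (where $\nabla h$ need not be in $C(\overline\Omega)$ and the boundary integrals are a priori only defined in a trace/nontangential-maximal-function sense), rather than invoking Lemma~\ref{l:starm} as a black box. The clean way around this is precisely the smoothing $\Gamma_\delta$ described above: one proves the inequality on the smooth domain $\widetilde\Omega$ for the harmonic extension of $h|_{\partial\widetilde\Omega\cap B}$ — or better, one observes that since $h$ is harmonic in a neighborhood of $\overline{\Omega\cap B(x_1,R/2)}$ minus the boundary and vanishes on $\partial\Omega\cap B$, one may instead work with the family of interior-approximating smooth star-shaped domains $\Omega_t=\{y : \langle y-x_1,\cdot\rangle \text{-dilate}\}\Subset\Omega$ and pass to the limit $t\uparrow1$, using that $H_h(x_1,\rho)$ depends continuously on the domain. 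Either way the quantitative input one needs is just the $O(\tau)$ lower bound on $\langle y-x_1,\nu(y)\rangle/|y-x_1|$ on the relevant boundary piece, which follows from $\tau$-Lipschitz-ness together with the vertical shift defining $x_1$, exactly as in the computation in the proof of Lemma~\ref{l:star}.
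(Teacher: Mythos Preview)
Your high-level strategy — shift $x_0$ to a star-center $x_1$, invoke the Kukavica--Nystr\"om monotonicity \eqref{eq:KN} at $x_1$, and absorb the center shift into a $1+O(\tau)$ factor on the radii — is exactly the paper's. But two points separate your proposal from a proof.

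\textbf{The shift must scale with $r$, not $R$.} You take $x_1=x_0+\tau R\,e_d$ and then write ``$|x_1-x_0|=\tau R\le 16\tau r$''; the inequality goes the other way, since $r<R/16$. With your shift, the ball inclusions $B(x_0,\rho)\subset B(x_1,\rho+|x_0-x_1|)$ produce a multiplicative factor $1+O(\tau R/r)$ in the doubling indices, which is \emph{not} close to $1$ for small $r$. The paper instead sets $x_1=x_0+16\tau r\,e_d$ and applies Lemma~\ref{l:star} on a ball of radius $16r$ (not $R$) to get star-shapedness of $\Omega\cap B(x_1,8r)$; then $|x_0-x_1|=16\tau r$ and the radius distortions in the chain $r,2r,4r$ are genuinely $1+O(\tau)$, uniformly in $r$.

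\textbf{The approximation step has a gap.} You correctly identify the regularity obstacle in running the Rellich--Ne\v{c}as identity on a bare Lipschitz domain, but none of your three workarounds closes it. Shifting the smoothed graph \emph{below} $\partial\Omega$ produces $\widetilde\Omega\supset\Omega$, and $h$ does not extend there (as you note). Dilating $\Omega$ toward $x_1$ gives interior domains $\Omega_t\Subset\Omega$ on which $h$ is smooth, but $h$ does \emph{not} vanish on $\partial\Omega_t$, so Lemma~\ref{l:starm} does not apply to $h$ on $\Omega_t$; and $\partial\Omega_t$ is still only Lipschitz, so nothing is gained. Running Rellich directly on $\Omega$ can indeed be made rigorous via the Dahlberg/Jerison--Kenig nontangential theory, but that is a substantial import you neither state nor justify (and the ``$O(\tau)$ defect'' you ascribe to the boundary term is spurious: star-shapedness already gives $\langle y-x_1,\nu\rangle\ge 0$ exactly).

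The paper avoids all of this by smoothing \emph{inward}: it mollifies the graph $f$ to $f_n$ and shifts it \emph{up}, so that $\Omega_n\subset\Omega\cap B$ has $C^3$ boundary piece $\Gamma_n$; then it does not try to use $h$ on $\Omega_n$ directly, but constructs an auxiliary harmonic function $h_n$ on $\Omega_n$ whose boundary data is a truncation of $h$ (so that $h_n=0$ on $\Gamma_n$), with $\sup_{\Omega_n}|h-h_n|\le\delta_n\to 0$. Lemma~\ref{l:starm} applies verbatim to $h_n$ on $\Omega_n$, and passing to the limit in $N_{h_n}(x_0,r)\le(1+\varepsilon)N_{h_n}(x_0,2r)$ yields \eqref{eq:monLip}. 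This inward-smoothing-plus-auxiliary-function device is the missing idea in your proposal.
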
 
 
 We remark that a stronger result holds when the  boundary of the domain is smooth. For example for the case of a $C^{1,Dini}$ domain the inequality \eqref{eq:monLip} can be replaced by $N_h(x_0,r_1)\le (1+\varepsilon)N(x_0, r_2)$ when $r_1<r_2<R/8$, see \cite{KN}\footnote{We refer the reader also to the preceding works \cite{AEK} and \cite{AE} for related results.}.
 Thus for $C^{1,Dini}$ domains, we know that the doubling index over balls centered on $\partial\Omega\cap B$ stays uniformly bounded. We do not know if this is still true for Lipschitz domains.
 For the case of the domains with small Lipschitz constant, we can conclude only that the doubling index $N_h(x_0,r)$ 
 does not grow faster than $r^{-a}$ with some small positive $a$ as $r\to 0$, which is sufficient for our purposes.

 \begin{proof}  First we assume that $\partial\Omega\cap B$ is a graph of a $C^3$-smooth function.   Let $e_d$ be as in Remark \ref{r:1} and let $x_1=x_0+16\tau re_d$. We assume that $\tau<1/16$. Then by Lemma \ref{l:star} we see that $B(x_1,8r)\cap \Omega$ is star-shaped with respect to $x_1$. 
 We apply \eqref{eq:KN} and  obtain
 \begin{multline*}N_h(x_0,r)=\log\frac{H_h(x_0,2r)}{H_h(x_0,r)}\le \log\frac{H_h(x_1, (2+16\tau)r)}{H_h(x_1, (1-16\tau)r)}\\
 \le \frac{\log((2+16\tau)/(1-16\tau))}{\log((4-16\tau)/(2+16\tau))}\log\frac{H_h(x_1, (4-16\tau)r)}{H_h(x_1, (2+16\tau)r)}\\
 \le (1+O(\tau))\log\frac{H_h(x_0, 4r)}{H_h(x_0, 2r)}=(1+\varepsilon) N_h(x_0, 2r),
 \end{multline*}
 when $\tau$ is small enough.
 
 We want to drop the assumption that $\partial\Omega\cap B$ is $C^3$ smooth.  We fix the ball $B$ and assume that $\partial\Omega\cap B$ is given by the graph of a Lipschitz function $f: B^{d-1}(0,R)\to\R$ with the Lipschitz constant bounded by $\tau$. In this coordinate system the ball $B$ is identified with $B^d(0,R)$ 
 
 Let $\varphi$ be a mollifier supported in the unit ball of $\R^{d-1}$ and let, as usual,  $\varphi_\delta(x)=\delta^{-(d-1)}\varphi\left(\frac{x}{\delta}\right)$.
 We define  $f_n=f*\varphi_{R/n}+\tau R/n$.  Then $\{f_n\}$  is a sequence of $C^3$ smooth functions such that 
 \[f_n:B^{d-1}(0,(1-1/n)R)\to \R,\quad f(y')<f_n(y')<f(y')+2\tau R/n,\]
  and the Lipschitz constant of $f_n$ is also bounded by  $\tau$. We also define
 \[\Omega_n=\{y=(y',y'')\in B^d(0, (1-1/n)R)\subset\R^{d-1}\times\R: y''>f_n(y')\}.\]
 Clearly, $\Omega_n\subset B^d(0, (1-1/n)R)\cap \Omega$. 
 Let
 \[\Gamma_n=\{y=(y',y'')\in B^d(0,(1-1/n)R): y''=f_n(y')\}.\]
 First, we see that $\delta_n=\sup_{\Gamma_n}|h|$ converge to zero as $n\to\infty$ since $h$ is uniformly continuous on $\overline{\Omega}\cap\overline{B}$, $h=0$ on $\partial\Omega$, and $\dist(y,\partial\Omega)<2\tau R/n$ when $y\in\Gamma_n$.

 Next, we consider the harmonic function $h_n$ in $\Omega_n$ such that on $\partial\Omega_n$
 \[h_n(x)=\begin{cases}h(x)-\delta_n,\ {\text{if}}\ h(x)>\delta_n,\\
 0,\quad\quad\quad\ \ \ {\text{if}}\ |h(x)|\le\delta_n,\\
 h(x)+\delta_n,\ {\text{if}}\ h(x)<-\delta_n.
 \end{cases}\]
 Clearly we have $h_n\in C(\overline{\Omega}_n)$, $h_n=0$ on $\Gamma_n$, and, by the maximum principle,  $|h-h_n|\le\delta_n$ in $\Omega_n$.
  Thus $h_n\rightarrow h$ uniformly on compact subsets of $B\cap\Omega$. 
  
  We fix $x_0\in\Omega\cap\frac14B$ and $r\in(0,R/16)$. Then $x_0\in \Omega_n\cap B(0,(1-1/n)R/4)$ and $r<(1-1/n)R/16$ for $n$ large enough. Also, $|h_n|\le \max_{\overline{\Omega}\cap\overline{B}}|h|$ and $|(\Omega\cap B(x,R))\setminus \Omega_n|\to 0$ as $n\to\infty$. Then we have
  \[N_h(x_0,r)=\frac{\int_{B(x_0,2r)\cap\Omega}h^2}{\int_{B(x,r)\cap\Omega}h^2}=\lim_{n\to\infty}\frac{\int_{B(x_0,2r)\cap\Omega_n}h_n^2}{\int_{B(x_0,r)\cap\Omega_n}h_n^2}=\lim_{n\to\infty}N_{h_n}(x_0,r).\]
  The inequality \eqref{eq:monLip} is now obtained as the limit of  the corresponding inequalities for $h_n$. Finally the required inequality \eqref{eq:monLip} for $x_0\in \partial\Omega\cap\frac14B$  follows by taking the limit as $\varepsilon\to 0+$ of the corresponding inequalities for $x_0+\varepsilon e_d$.
 \end{proof}
 
 \begin{corollary}\label{cor:x12} Let $\partial\Omega\cap B\in Lip(\tau)$, $\tau<\tau_\varepsilon$, $B=B(x,R)$, $x\in\partial\Omega$, and let  $x_1,x_2\in \overline{\Omega}\cap \frac{1}{4}B$  with $|x_1-x_2|<r/4$ and $r<R/8$. If $h\in C(\overline{\Omega})$ is a non-zero harmonic function in $\Omega$ such that $h=0$ on $\partial\Omega\cap B$,  then
 \[N_h(x_1,r/2)\le  3(1+\varepsilon)^2 N_h(x_2, r).\]
 \end{corollary}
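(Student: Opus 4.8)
\emph{Proof proposal.} The plan is to deduce the corollary from Lemma~\ref{l:mon} applied at the single point $x_2$, combined with the trivial monotonicity of $H_h(\cdot,\cdot)$ in its two arguments. First I would record that $H_h(y,\rho)>0$ for every $y\in\overline\Omega$ and every $\rho>0$: if $H_h(y,\rho)=0$, then $h$ vanishes on the nonempty open set $B(y,\rho)\cap\Omega$, hence $h\equiv 0$ by unique continuation, contradicting the hypothesis that $h$ is non-zero. Thus all the doubling indices appearing below are well defined, and, since $B(y,\rho)\subset B(y,2\rho)$, they are nonnegative.

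Next I would use $|x_1-x_2|<r/4$ to obtain the nested inclusions $B(x_2,r/4)\subset B(x_1,r/2)$ and $B(x_1,r)\subset B(x_2,2r)$, which give $H_h(x_2,r/4)\le H_h(x_1,r/2)$ and $H_h(x_1,r)\le H_h(x_2,2r)$. Hence
\[
N_h(x_1,r/2)=\log\frac{H_h(x_1,r)}{H_h(x_1,r/2)}\le\log\frac{H_h(x_2,2r)}{H_h(x_2,r/4)}=N_h(x_2,r/4)+N_h(x_2,r/2)+N_h(x_2,r),
\]
where the last equality is the telescoping identity along the scales $r/4<r/2<r<2r$. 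Since $x_2\in\overline\Omega\cap\frac14 B$, $\partial\Omega\cap B\in Lip(\tau)$ with $\tau<\tau_\varepsilon$, and $r<R/8$ forces both $r/4$ and $r/2$ to be $<R/16$, Lemma~\ref{l:mon} applies at $x_2$ and yields $N_h(x_2,r/2)\le(1+\varepsilon)N_h(x_2,r)$ and $N_h(x_2,r/4)\le(1+\varepsilon)N_h(x_2,r/2)\le(1+\varepsilon)^2 N_h(x_2,r)$. Adding the three terms and using $1\le(1+\varepsilon)\le(1+\varepsilon)^2$ together with $N_h(x_2,r)\ge 0$ gives $N_h(x_1,r/2)\le\big(1+(1+\varepsilon)+(1+\varepsilon)^2\big)N_h(x_2,r)\le 3(1+\varepsilon)^2 N_h(x_2,r)$, which is the claimed estimate.

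I do not expect a serious obstacle: this is a short chaining argument. The only points requiring care are bookkeeping ones — verifying that the ball inclusions come out with exactly the stated constants, checking that the radii at which Lemma~\ref{l:mon} is invoked are genuinely below $R/16$ (which is precisely what the restriction $r<R/8$ provides, the doubling of the scale $r/2$ being the tight case), and keeping track of the positivity of $H_h$ so that all logarithms are legitimate. One could in fact replace $3(1+\varepsilon)^2$ by the slightly smaller $1+(1+\varepsilon)+(1+\varepsilon)^2$, but the stated form is the convenient one for later use.
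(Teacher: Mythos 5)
Your proof is correct and is essentially the same argument as the paper's: the same ball inclusions $B(x_2,r/4)\subset B(x_1,r/2)$ and $B(x_1,r)\subset B(x_2,2r)$, the same telescoping of $\log\frac{H_h(x_2,2r)}{H_h(x_2,r/4)}$ into three doubling indices at $x_2$, and then Lemma~\ref{l:mon} applied twice. You simply spell out the routine checks (positivity of $H_h$, the radius bookkeeping for invoking Lemma~\ref{l:mon}) that the paper leaves implicit.
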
 

\begin{proof} Note that $B(x_1,r)\subset B(x_2, 2r)$ and $B(x_1, r/2)\supset B(x_2, r/4)$. Thus we obtain
\[N_h(x_1, r/2)\le \log\frac{\int_{B(x_2,2r)\cap\Omega}h^2}{\int_{B(x_2, r/4)\cap \Omega}h^2}=N_h(x_2, r/4)+ N_h(x_2, r/2)+N_h(x_2, r).\]
Now Lemma \ref{l:mon} implies the required estimate.
\end{proof}


\subsection{Three ball inequality}
We apply the monotonicity lemma a number of times. First, we claim that it implies
 a version of the three ball theorem for harmonic functions vanishing on some part of the boundary. 

\begin{lemma}\label{l:three1} Let $\Omega$ be a  domain in $\R^d$ and let $B$ be a ball centered on $\partial\Omega$. We assume that $\partial\Omega\cap B\in Lip(\tau)$, where $\tau$ is small enough. Then for any function $h\in C(\overline{\Omega})$ harmonic in $\Omega$ and vanishing on $\partial\Omega\cap B$, we have
\[\sup_{ \frac{3}{2}B_0\cap\Omega}|h|\le 3^d(\sup_{B_0\cap\Omega}|h|)^{1/3} (\sup_{4B_0\cap\Omega}|h|)^{2/3}, \]
 for any  ball $B_0$ with the center in $\overline{\Omega}\cap \frac{1}{4}B$ and such that $16B_0\subset B$.
\end{lemma}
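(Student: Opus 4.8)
The plan is to deduce the boundary three-ball inequality (Lemma \ref{l:three1}) from the almost-monotonicity of the doubling index (Lemma \ref{l:mon}), in parallel with how the interior three-ball inequality \eqref{eq:threeh} was derived from \eqref{eq:three2}. The first step is to extract from \eqref{eq:monLip} a chain of ratio inequalities for $H_h(x_0,\cdot)$. If we write out $N_h(x_0,r)\le (1+\varepsilon)N_h(x_0,2r)$ in terms of $H_h$ and iterate/telescope, we obtain a control of $H_h(x_0,2r)$ by a geometric combination of $H_h(x_0,r)$ and $H_h(x_0,4r)$; concretely, choosing $\varepsilon$ small enough, the exponents can be forced to be close to $1/2$ and $1/2$, and after absorbing a bounded loss one gets an inequality of the form
\[
H_h(x_0,2r)^2 \le C\, H_h(x_0,r)\, H_h(x_0,4r)
\]
(or with exponents $1/3,2/3$ on $H_h(x_0,r),H_h(x_0,4r)$ over a longer range, which is in fact what the $1/3$–$2/3$ split in the statement suggests we should aim for). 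This is the boundary analogue of \eqref{eq:three2}.

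Next I would convert the $L^2$-type inequality for $H_h$ into an $L^\infty$-type inequality, using the subharmonicity estimate \eqref{eq:subh}: for $y\in \frac32 B_0\cap\Omega$ we have $h^2(y)\le |B(y,\rho/2)|^{-1}\int_{B(y,\rho/2)\cap\Omega}h^2$ with $\rho$ the radius of $B_0$, and $B(y,\rho/2)\subset 2B_0$, so $\sup_{\frac32 B_0\cap\Omega}h^2$ is bounded by a dimensional constant times $\rho^{-d}H_h(z_0,2\rho)$ where $z_0$ is the center of $B_0$. Conversely, $H_h(z_0,r)\le |B(z_0,r)\cap\Omega|\sup_{rB_0\cap\Omega}h^2 \le c_d \rho^d (\sup |h|)^2$ for the relevant radii. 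Plugging these two elementary comparisons into the $H_h$-three-ball inequality and taking square roots yields the stated bound with the constant $3^d$ after tracking the powers of $2$ and the volume factors — the dimensional constant is not delicate, it just needs to be absorbed into something of the form $3^d$.

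The one real point requiring care is hypotheses matching: Lemma \ref{l:mon} applies to $x_0\in\overline\Omega\cap\frac14 B$ and $r<R/16$, and Corollary \ref{cor:x12} lets us compare doubling indices at nearby centers, so I must choose the radii in the telescoping argument so that every ball $B(z_0,c\rho)$ that appears (with $c$ ranging over the dyadic scales between roughly $\rho$ and $16\rho$) is centered in $\frac14 B$ and has radius $<R/16$; the hypothesis $16B_0\subset B$ with center of $B_0$ in $\frac14 B$ is exactly what guarantees this, since then $4B_0\subset B$ and the relevant radii are $\le R/16$ up to the slack built into "$16B_0\subset B$". I should also note, as in the proof of \eqref{eq:threeh}, that the centers of the balls one integrates over in \eqref{eq:subh} drift slightly, so Corollary \ref{cor:x12} (rather than Lemma \ref{l:mon} alone) is what makes the bookkeeping go through when comparing $H_h$ at the center $z_0$ versus at points of $\frac32 B_0$.

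The main obstacle I expect is purely quantitative: getting the exponents to come out as $1/3$ and $2/3$ (rather than, say, $1/2$ and $1/2$ over a shorter range) while keeping the multiplicative constant clean. Since Lemma \ref{l:mon} only gives $N_h(x_0,r)\le(1+\varepsilon)N_h(x_0,2r)$ with $\varepsilon$ at our disposal, I would fix $\varepsilon$ small depending only on $d$ at the very start, then run the telescoping over a bounded (dimension-independent) number of dyadic scales between the radius of $B_0$ and $16B_0$, so that the accumulated factor $(1+\varepsilon)^{k}$ stays bounded; the exponents $1/3,2/3$ are then obtained by summing a short geometric-type series of $\log$-ratios, exactly as in the classical three-ball argument, and the final constant is crude enough to be dominated by $3^d$. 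No genuinely new idea beyond Lemma \ref{l:mon} and \eqref{eq:subh} is needed.
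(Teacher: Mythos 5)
Your high-level plan (deduce an $L^2$ three-ball inequality from Lemma~\ref{l:mon}, then pass to $L^\infty$ via \eqref{eq:subh}) is exactly the paper's route, but you have misread where the $1/3$--$2/3$ exponents come from, and this makes the middle of your argument considerably more tangled than it needs to be. The inequality $N_h(x_0,r)\le(1+\varepsilon)N_h(x_0,2r)$ applied \emph{once} rearranges to
\[
H_h(x_0,2r)^{2+\varepsilon}\le H_h(x_0,r)\,H_h(x_0,4r)^{1+\varepsilon},
\]
and taking $\varepsilon=1$ (not $\varepsilon$ small!) gives $H_h(x_0,2r)^3\le H_h(x_0,r)\,H_h(x_0,4r)^2$, i.e.\ precisely $H_h(x_0,2r)\le H_h(x_0,r)^{1/3}H_h(x_0,4r)^{2/3}$, with no multiplicative constant to absorb. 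So there is no telescoping, no ``longer range'', no geometric series of log-ratios, and no bounded loss to absorb: the $1/3$--$2/3$ split is what $\varepsilon=1$ produces at a single scale. Your intuition that small $\varepsilon$ is what gives clean exponents is backwards --- small $\varepsilon$ pushes the exponents toward $1/2$--$1/2$, whereas the stated lemma is exactly the $\varepsilon=1$ case, which is the \emph{weakest} requirement on $\tau$ compatible with Lemma~\ref{l:mon} (one only needs $\tau<\tau_1$).

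A second, smaller confusion: Corollary~\ref{cor:x12} and any center-shifting are not needed. In the $L^\infty$ step one simply takes $y\in\frac32 B_0\cap\Omega$, notes $B(y,r/2)\subset 2B_0$, and uses \eqref{eq:subh} to bound $h^2(y)$ by $|B(y,r/2)|^{-1}\int_{2B_0\cap\Omega}h^2=|B(y,r/2)|^{-1}H_h(x_0,2r)$; the center of the $H_h$ quantity stays at $x_0$ throughout, so there is no ``drift'' to reconcile. The hypothesis $16B_0\subset B$ with the center of $B_0$ in $\overline\Omega\cap\frac14 B$ serves only to put $x_0$ and $r$ within the range required by Lemma~\ref{l:mon}; it does not encode any iteration over scales. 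Finally, when converting $H_h(x_0,r)\le c_d r^d\sup_{B_0\cap\Omega}h^2$ etc., the constant $3^d$ is indeed crude, but it comes out in one line rather than from a running product --- the paper's computation gives $8^d$ for the square, i.e.\ $(2\sqrt2)^d\le 3^d$ after the square root, if one uses $|B(y,r/2)|^{-1}\int_{2B_0\cap\Omega}h^2\le 8^d\sup_{2B_0\cap\Omega}h^2\cdot\frac{|2B_0|}{|2B_0|}$ type bookkeeping. With these corrections your proof collapses to the paper's proof.
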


\begin{proof}  Assume that $\tau<\tau_1$ given by Lemma \ref{l:mon} for the case $\varepsilon=1$. Let  $B_0=B(x_0,r)$. We apply  Lemma \ref{l:mon}. Taking the exponentials,  we obtain
\[
\int_{2B_0\cap\Omega} h^2\le \left(\int_{B_0\cap\Omega}h^2\right)^{1/3}\left(\int_{4B_0\cap\Omega}h^2\right)^{2/3}.
\]
Then \eqref{eq:subh} and the trivial bound of the $L^2$-norm by the $L^\infty$-norm  imply that for any $y\in \frac{3}{2}B_0\cap\Omega$,
\begin{multline*}\label{eq:supth}
h^2(y)\le \frac{1}{|B(y,r/2)|}\int_{2B_0\cap\Omega}h^2
\le 8^d \left(\sup_{B_0\cap\Omega}|h|\right)^{2/3}\left(\sup_{4B_0\cap\Omega}|h|\right)^{4/3}.
\end{multline*}
\end{proof}


\subsection{The  maximal doubling index} 
Let $\Omega$ be a  domain in $\R^d$ and let  $\partial\Omega\cap B\in Lip(\tau)$ where $B$ is centered on $\partial\Omega$.  We consider a closed cube  $Q\subset \frac{1}{32}B$ such that $Q\cap \Omega\neq\varnothing$.
Assume that a non-zero function $h\in C(\overline{\Omega})$ is harmonic in $\Omega$ and vanishes  on $\partial\Omega\cap B$ and let  $\ell=\diam(Q)$. We define the  maximal doubling index of  $h$ in $Q$    by 
\begin{equation}\label{eq:N*}
N^*_h(Q)=\sup_{x\in Q\cap\overline{\Omega}, \frac{\ell}{2}\le r\le \ell}  N_h(x,r).
\end{equation}
 Clearly the function $(x,r)\mapsto N_h(x,r)$ is continuous on $(Q\cap\overline{\Omega})\times[\ell/2,\ell]$. Therefore the supremum above is finite.

 Lemma \ref{l:mon} on the monotonicity of the doubling index implies that  if $\varepsilon>0$ and $\tau<\tau_\varepsilon$, then
for any cube $Q_1\subset Q\subset \frac{1}{32}B$ and $Q_1\cap\Omega\neq\varnothing$, we have 
 \[N_h^*(Q_1)\le \left(\frac{2s(Q)}{s(Q_1)}\right)^{2\varepsilon} N_h^*(Q),\] 
 where $s(Q)$ is the side length of the cube $Q$; we have used the inequality $\log_2(1+\varepsilon)\le 2\varepsilon$.

 \subsection{A version of the main result for harmonic functions}
Let $\Omega$ be a domain in $\R^d$ and let $h\in C(\overline{\Omega})$ be a non-zero harmonic function in $\Omega$. We assume that $h=0$ on the part $\partial\Omega\cap B$ of the boundary, where $B$ is a ball centered on $\partial\Omega$ and $\partial\Omega\cap B\in Lip(\tau)$.  Our aim is to estimate the $(d-1)$-dimensional measure of the zero set of $h$  using the  doubling index of $h$. We define the zero set of $h$ by 
\[Z(h)=\{x\in\Omega: h(x)=0\},\] so that the boundary points are not included into the zero set.

\begin{theorem}\label{th:m2}  Let  $\Omega\subset \R^{d}$, let $x\in\partial\Omega$ and let $r>0$ be such that $\partial\Omega\cap B(x,128r)\in Lip(\tau),$ where $\tau$ is small enough. Then there exists $C$ such that  
\[\h^{d-1}(Z(h)\cap B(x,r))\le C(N_h(x,4r)+1)r^{d-1},\]
for any non-zero function $h\in C(\overline{\Omega})$ that is harmonic in $\Omega$ and satisfies $h=0$ on $\partial\Omega\cap B(x,128x)$.
\end{theorem}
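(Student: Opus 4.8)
The plan is to prove Theorem~\ref{th:m2} by an iteration on $A$-adic subcubes in the spirit of \cite{L1}, with one new ingredient: the almost-monotonicity of the doubling index near the boundary (Lemma~\ref{l:mon}, Corollary~\ref{cor:x12}) makes the doubling index behave like a budget that can be \emph{spent} as one zooms toward $\partial\Omega$ but never \emph{replenished}, and this is what upgrades the polynomial bound of \cite{L1} to the sharp linear one. First I would normalize: after a translation and a dilation (Remark~\ref{r:2}) we may take $x=0$, $r=1$, and set $N:=N_h(0,4)$, so the goal becomes $\h^{d-1}(Z(h)\cap B(0,1))\le C(N+1)$. I would fix, in this order, a large dimensional constant $A$, then a small $\varepsilon=\varepsilon(A)>0$ with $(1+\varepsilon)^{\log_2 A}\le 2$, and then require $\tau\le\tau_\varepsilon$, so that Lemma~\ref{l:mon}, Corollary~\ref{cor:x12}, Lemma~\ref{l:three1} and the $N^*$-monotonicity all apply inside a fixed cube $Q_0$ with $B(0,1)\subset Q_0\subset\tfrac1{32}B(0,128)$. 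By a Crofton-type integral-geometric argument it is enough to bound, scale by scale, the number of $A$-adic subcubes of $Q_0$ meeting $Z(h)$; and, recording first a (by now standard) estimate that a harmonic function with bounded doubling index has a zero set of bounded $\h^{d-1}$-measure on a smaller cube --- valid also for cubes meeting $\partial\Omega$, via \eqref{eq:threeh}, Lemma~\ref{l:three1} and integral geometry --- it suffices to bound $\sum \ell(q)^{d-1}$ over the \emph{stopping cubes}: the maximal $A$-adic subcubes $q$ that meet $Z(h)$ and satisfy $N^*_h(q)\le N_0$ for a dimensional threshold $N_0$, plus a contribution from the cubes whose doubling index never drops to $N_0$, which should be negligible because such cubes are exponentially rare in each generation.

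The heart of the argument is the subdivision step. Given an $A$-adic cube $Q$ of side $\ell$ with $N^*_h(Q)=\nu>N_0$, split it into $A^d$ congruent subcubes and classify them as \emph{boundary} subcubes (within distance $C\ell/A$ of $\partial\Omega$) or \emph{interior} subcubes. Since $\partial\Omega$ is a Lipschitz graph with small constant, at most $CA^{d-1}$ of the subcubes are boundary subcubes, and for each of them Corollary~\ref{cor:x12} together with the $N^*$-monotonicity gives $N^*_h(q)\le 2\nu$ --- the crucial point being that the doubling index does \emph{not} increase as one refines toward $\partial\Omega$. For the interior subcubes I would invoke the interior hyperplane (simplex) lemma of Logunov \cite{L1}, which follows from repeated use of the three-ball inequality \eqref{eq:threeh}: for suitable dimensional $A,N_0,c>0$, at most $A^{d-1-c}$ of the interior subcubes satisfy $N^*_h(q)>\max(\nu/2,N_0)$, while every other interior subcube has $N^*_h(q)\le\max(\nu/2,N_0)$.

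To extract a \emph{linear} bound I would then organize the resulting tree of cubes by charging along branches. Along any branch, an interior ``good'' step halves $\nu$, so it occurs at most $O(\log N)$ times before the threshold $N_0$ is reached; interior ``bad'' steps branch with multiplicity at most $A^{d-1-c}$, so their total $\ell(q)^{d-1}$-mass (normalized by the parent's) is a geometric series in $CA^{-c}<1$ and gets absorbed; and the boundary subcubes are accounted for not one parent at a time but over \emph{all} scales of the boundary layer simultaneously --- because the doubling index is almost monotone decreasing toward $\partial\Omega$, the boundary contributions at successive scales telescope against the decrease of the doubling index, so their total is $\le C(N+1)$ rather than a power of $N$. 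Combining the three mechanisms yields $\sum \ell(q)^{d-1}\le C(N+1)$ over the stopping cubes, hence $\h^{d-1}(Z(h)\cap B(0,1))\le C(N+1)$, which after undoing the normalization is the assertion of Theorem~\ref{th:m2}.

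The step I expect to be the main obstacle is exactly the boundary subcubes. A naive recursion on them cannot close: there are $\sim A^{d-1}$ of them per parent and their doubling index is not smaller than the parent's, so the recursion coefficient is $\ge 1$, which is useless. The resolution is to refuse to recurse on boundary cubes in isolation and instead account for the entire boundary layer at once, using the monotonicity of the doubling index near $\partial\Omega$ to convert a sum over all refinement scales of the layer into a single linear-in-$N$ estimate --- this is precisely the place where the Dirichlet/Lipschitz-boundary hypothesis is used in an essential way, and where the interior argument of \cite{L1} only produces a power of $N$. I also expect Lemma~\ref{l:Cauchy} to enter at this point, to control the zero set inside cubes that sit deep in the boundary layer, where $h$ and $\partial_d h$ are simultaneously small, so that propagation of smallness from $\partial\Omega$ can be invoked.
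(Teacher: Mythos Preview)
There is a genuine gap in the treatment of the boundary cubes, and it is exactly at the point you flag as the main obstacle. Your proposed resolution --- ``the boundary contributions at successive scales telescope against the decrease of the doubling index'' --- does not work, because almost-monotonicity (Lemma~\ref{l:mon}) only says $N_h(x,r)\le(1+\varepsilon)N_h(x,2r)$; it gives an \emph{upper} bound at the smaller scale, not a strict decrease. For the model $h(x',x'')=x''$ in a half-space the doubling index at a boundary point is the same at every scale, so there is nothing to telescope and your boundary-layer sum is a divergent series of equal terms. What the paper actually proves is a \emph{boundary} hyperplane lemma: if $N^*_h(Q)>N_0$ then among the $2^{k(d-1)}$ boundary subcubes of $Q$ (cubes centred on $\partial\Omega$) at least one satisfies $N^*_h(q)\le N^*_h(Q)/2$; and if $N^*_h(Q)\le N_0$ then at least one boundary subcube is zero-free. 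This single good boundary cube is what closes the recursion, via
\[
\Bigl(\tfrac{2^{k(d-1)}-1}{2^{k(d-1)}}(1+\varepsilon)^k+\tfrac{1}{2}\cdot\tfrac{1}{2^{k(d-1)}}\Bigr)C_0<C_0.
\]
The proof of the first of these lemmas is by contradiction using Lemma~\ref{l:Cauchy}: if every boundary subcube had large doubling index, then $h$ and $\partial_d h$ would be exponentially small on a full strip along $\partial\Omega$, and quantitative Cauchy uniqueness would force $h$ to be small on a macroscopic ball, contradicting $N^*_h(Q)$. So Lemma~\ref{l:Cauchy} enters to produce a \emph{drop in the doubling index along the boundary}, not to ``control the zero set inside cubes deep in the boundary layer'' as you suggest.

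There is a second, related gap: you invoke as ``by now standard'' the estimate that a harmonic function with bounded doubling index has zero set of bounded $\h^{d-1}$-measure in a cube \emph{meeting $\partial\Omega$}. That is standard for interior cubes (Lemma~\ref{l:in}, Donnelly--Fefferman), but for boundary cubes it is exactly the content of Theorem~\ref{th:m2} in the regime $N\le N_0$, so assuming it is circular. The paper handles this regime with the second hyperplane lemma (one zero-free boundary subcube) inside the same induction. Finally, the interior subcubes do not need the simplex machinery of \cite{L1}: since the operator is the flat Laplacian, Lemma~\ref{l:in} already gives the sharp linear estimate there, and the paper simply sums it.
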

Theorem \ref{th:m2} is proved in  Section \ref{ss:ptm2}. We then deduce  Theorem $\ref{th:main}' $ in Section \ref{ss:thm}, where we consider the harmonic extension of the eigenfunction and use Lemma \ref{l:DF} below to estimate the doubling index of the extension by a multiple of the square root of the eigenvalue. 

Theorem \ref{th:m2}  allows us to estimate the area of the zero set of a harmonic function near the part of the boundary  where the function vanishes. 
We remark also that the  estimate for the zero set inside the domain was proved by Donnelly and   Fefferman in \cite{DF}.  

\begin{lemma}[Donnelly, Fefferman]\label{l:in} 
Let   $h$ be a non-zero harmonic function in $\Omega\subset \R^{d}$.
There exists $C$  such that 
\[\h^{d-1}(Z(h)\cap B)\le C (N_h(x,4r)+1)r^{d-1},\]
for any ball $B=B(x,r)$  that satisfies $\overline{B}(x,8r) \subset\Omega$. 
\end{lemma}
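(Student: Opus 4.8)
The plan is to reproduce the interior nodal estimate of Donnelly and Fefferman \cite{DF}. After a translation and a dilation one may assume $x=0$, $r=1$, so that $\overline{B}(0,8)\subset\Omega$, and the goal becomes $\h^{d-1}(Z(h)\cap B(0,1))\le C(N+1)$ with $N:=N_h(0,4)$. Two ingredients do the work. The first is the Euclidean Crofton (integral-geometric) formula, which expresses $\h^{d-1}(Z(h)\cap B(0,1))$ as an average, over affine lines meeting $B(0,1)$, of the number of zeros of $h$ on the line. The second is a complexification of $h$ together with Jensen's formula, which bounds the number of zeros of $h$ on a line by a logarithmic ratio of sup-norms of the holomorphic extension along the (complexified) line. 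The doubling hypothesis is to be used only at the very end, to absorb those logarithms.

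For the count on a single line I would argue as follows. For $a\in B(0,1)$ we have $\overline{B}(a,6)\subset\Omega$, so the Poisson integral over $\partial B(a,6)$ extends $h$ to a holomorphic function $\tilde h$ on $\{z=u+iv\in\C^d:|u-a|+|v|<6\}$, which contains the complex ball $\{z:\sum_j|z_j-a_j|^2<16\}$, and a direct estimate of the extended Poisson kernel gives $\sup|\tilde h|\le C_d\sup_{B(a,6)}|h|\le C_d\sup_{B(0,7)}|h|=:A$, with $C_d$ (hence $A$) independent of $a$. Given $e\in S^{d-1}$ and $a\in e^\perp$, put $g_{e,a}(\zeta):=\tilde h(a+\zeta e)$, holomorphic on $D(0,4)\subset\C$ with $\sup_{D(0,4)}|g_{e,a}|\le A$. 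For a.e.\ line $\ell_{e,a}=\{a+te:t\in\R\}$ one has $g_{e,a}\not\equiv0$ (since $h\not\equiv0$ is real-analytic); dividing $g_{e,a}$ by the Blaschke factors $b$ of $D(0,4)$ at its zeros inside $D(0,1)$, and using the maximum principle together with the bound $|b(\zeta)|\le\tfrac{8}{15}$ on $|\zeta|=1$, yields
\[
\#\{\zeta\in D(0,1):g_{e,a}(\zeta)=0\}\ \le\ \kappa\,\log\frac{\sup_{D(0,4)}|g_{e,a}|}{\sup_{D(0,1)}|g_{e,a}|},\qquad \kappa:=\big(\log\tfrac{15}{8}\big)^{-1}.
\]
Real zeros of $h$ on $\ell_{e,a}$ lying in $B(0,1)$ correspond to real zeros of $g_{e,a}$ in $(-1,1)\subset D(0,1)$, so this controls $\#(Z(h)\cap\ell_{e,a}\cap B(0,1))$.

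Next I would assemble via Crofton. The set $Z(h)\cap B(0,1)$ is $(d-1)$-rectifiable: by Cauchy uniqueness its singular set $\{h=0,\ \nabla h=0\}$ has Hausdorff dimension $\le d-2$, so off an $\h^{d-1}$-null set it is a real-analytic hypersurface. Hence $c_d\,\h^{d-1}(Z(h)\cap B(0,1))=\int_{S^{d-1}}\int_{e^\perp}\#(Z(h)\cap\ell_{e,a}\cap B(0,1))\,da\,de$, which by the line bound is $\le\kappa\int_{S^{d-1}}\int_{e^\perp\cap B^{d-1}(0,1)}\big(\log\sup_{D(0,4)}|g_{e,a}|-\log\sup_{D(0,1)}|g_{e,a}|\big)\,da\,de$. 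The first term is at most $\Lambda_d\log A$ with $\Lambda_d:=|S^{d-1}|\,|B^{d-1}(0,1)|$. For the second, $\log\sup_{D(0,1)}|g_{e,a}|\ge\tfrac12\int_{-1}^1\log|h(a+te)|\,dt$ (a supremum dominates any of its averages); parametrising the cylinder $C_e:=\{a+te:a\in e^\perp\cap B^{d-1}(0,1),\,|t|<1\}$, for which $B(0,1)\subset C_e\subset B(0,2)$ and $|C_e|=2|B^{d-1}(0,1)|$, Fubini together with $|h|\le A$ on $B(0,2)$ gives $\int_{S^{d-1}}\int_{e^\perp\cap B^{d-1}(0,1)}\log\sup_{D(0,1)}|g_{e,a}|\,da\,de\ge\Lambda_d\log A-\tfrac{|S^{d-1}|}{2}\int_{B(0,2)}\log\frac{A}{|h(y)|}\,dy$. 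The two copies of $\Lambda_d\log A$ cancel, leaving
\[
\h^{d-1}(Z(h)\cap B(0,1))\ \le\ C_d\Big(1+\int_{B(0,2)}\log\frac{\sup_{B(0,7)}|h|}{|h(y)|}\,dy\Big).
\]

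The hard part will be to bound $\int_{B(0,2)}\log\frac{\sup_{B(0,7)}|h|}{|h(y)|}\,dy$ by $C(N+1)$; this is the only place the doubling hypothesis enters. First, the sub-mean-value inequality for the subharmonic function $h^2$ and the definition of $N$ give $\sup_{B(0,7)}|h|\le e^{C(N+1)}\sup_{B(0,4)}|h|$, so it suffices to treat $\int_{B(0,2)}\log\frac{\sup_{B(0,4)}|h|}{|h(y)|}\,dy$. The real issue is then a propagation-of-smallness (Remez-type) estimate — a standard consequence of the monotonicity \eqref{eq:monh} and the three-ball inequality \eqref{eq:threeh} obtained by iterating the latter over a grid of small balls, as in \cite{DF} — of the form $\big|\{y\in B(0,2):|h(y)|<\delta\sup_{B(0,4)}|h|\}\big|\le C\,\delta^{1/(C(N+1))}$ for $0<\delta<1$; integrating this distribution-function bound over $\delta$ yields $\int_{B(0,2)}\log\frac{\sup_{B(0,4)}|h|}{|h(y)|}\,dy\le C(N+1)$ and completes the argument. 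I expect this last input — the quantitative, doubling-index-uniform control of the sublevel sets $\{|h|<\delta\sup|h|\}$ (equivalently, the bound $\|\log|h|\|_{\mathrm{BMO}(B(0,2))}\le C(N+1)$) — to be the genuine difficulty; the integral geometry, the complexification, and the Jensen/Blaschke counting are all soft.
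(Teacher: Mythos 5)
Your skeleton — rescale, complexify $h$ via the Poisson kernel, and then combine the Crofton formula with a Jensen/Blaschke count on complexified lines — is exactly the machinery that underlies Proposition 6.7 of Donnelly--Fefferman, which is the statement the paper's appendix invokes as a black box. The paper itself does not unfold that proposition: it only verifies the hypotheses needed to apply it, namely that the holomorphic extension $H$ around each of a bounded number of balls $B(y,r/20)$ covering $Z(h)\cap B(x,r)$ has doubling ratio $\sup_{B_\C(y,2r)}|H|/\sup_{B(y,r/10)}|H|\le Ce^{9N}$, and then cites Proposition 6.7. So your plan is to re-prove Proposition 6.7, which is legitimate, but it means you must actually close the step the paper never has to touch — and that is precisely where your argument stops.

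The gap is the inequality $\int_{B(0,2)}\log\bigl(\sup_{B(0,4)}|h|/|h(y)|\bigr)\,dy\le C(N+1)$. You propose to get it from a Remez-type sublevel-set bound $\bigl|\{|h|<\delta\sup|h|\}\cap B(0,2)\bigr|\le C\delta^{1/(C(N+1))}$ and describe that as ``a standard consequence of \eqref{eq:monh} and \eqref{eq:threeh} obtained by iterating the latter over a grid of small balls.'' That is not so: iterating the three-ball inequality over a grid gives pointwise lower bounds of the form $\sup_{B'}|h|\ge (s(B')/s(B))^{CN}\sup_B|h|$ for sub-balls $B'$, which is a lower bound at one point of each small ball but says nothing about the Lebesgue measure of $\{|h|<\delta\}$. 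The sublevel-set estimate you invoke is a genuinely deep propagation-of-smallness result (essentially a BMO bound $\|\log|h|\|_{BMO}\lesssim N$), not available from a grid iteration, and Donnelly--Fefferman do not use it. Their Proposition 6.7 avoids the issue by a different device: using the plurisubharmonicity of $\log|H|$ together with the existence of a good reference point $y_0$ where $|h(y_0)|\ge e^{-CN}\sup|h|$ (which \emph{does} follow easily from doubling), one runs Jensen on complex discs through $y_0$ rather than over the footpoint parametrization $a\in e^\perp$ you chose, so the Jensen lower term is uniformly $\ge -CN$ and no control of $\int\log|h|$ over a generic cylinder is ever required. As it stands your write-up identifies the right structure but leaves the decisive step unproved — you even say so yourself — and the route you sketch for it (Remez via grid iteration) would not go through.
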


The proof follows from the argument in \cite{DF}, some versions of this result can be also found in \cite{L} and \cite{H}. We outline some steps of the  proof for the interested reader in the Appendix, see \ref{ss:DF}.

 \section{Two auxiliary lemmas}\label{s:two}
  \subsection{A standard construction}
  In this section we give two versions of the Hyperplane Lemma. We suggest  that the reader compares the statements to the one of \cite[Lemma 4.1]{L1}. Both statements refer to the following construction.
 
\begin{figure}
	\centering
	\includegraphics[scale=0.7]{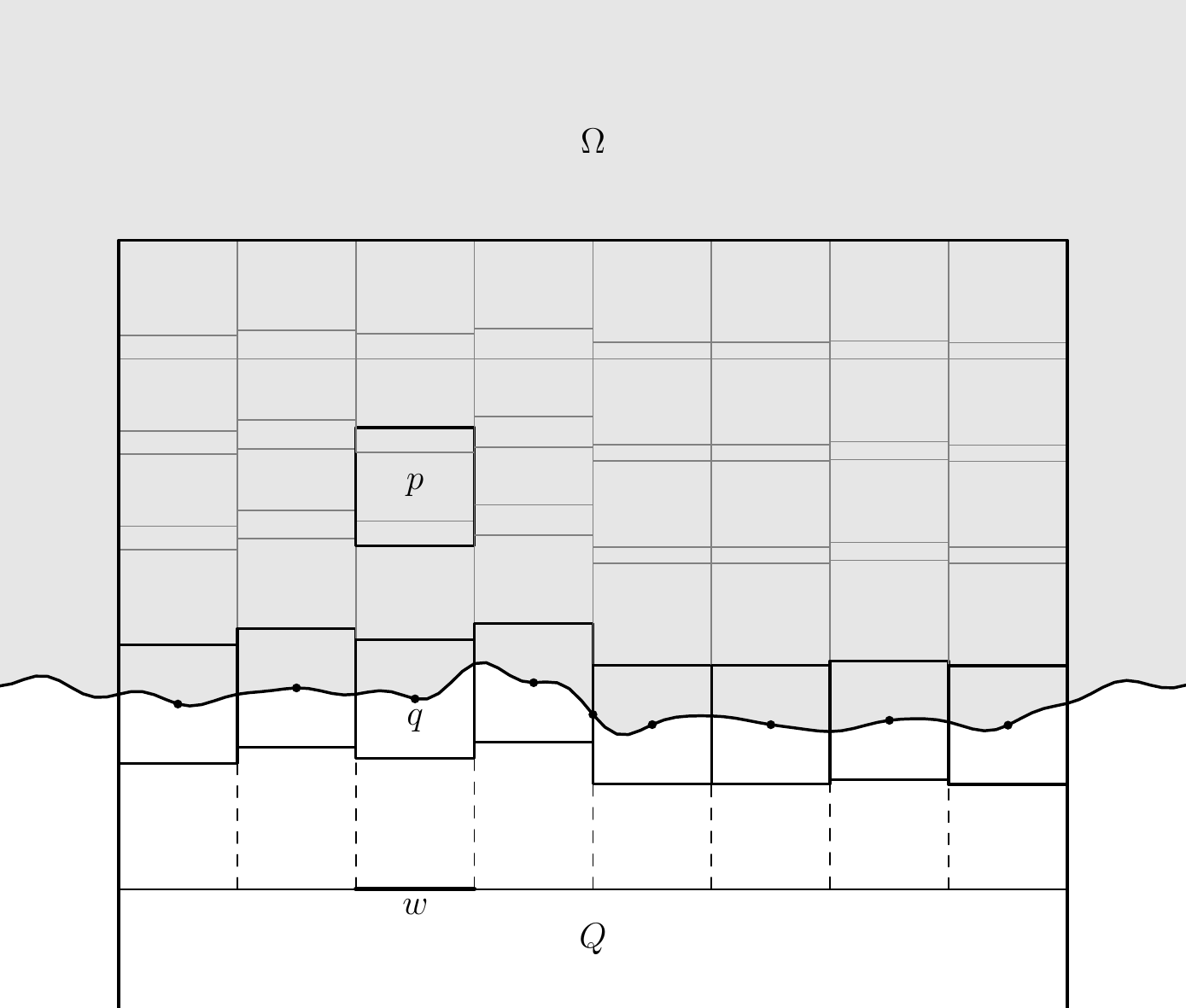}
	\caption{The standard construction}\label{f:1}
\end{figure}

 Assume that $\Omega\subset\R^{d}$  and $\partial\Omega\cap B\in Lip(\tau)$, where $B$ is a ball centered on $\partial\Omega$ and $\tau\in(0, (16\sqrt{d})^{-1})$. We fix a coordinate system as in Remark \ref{r:1}. Let  $Q$ be a cube centered at $x_Q=(x_Q',x_Q'')\in\partial\Omega\cap B$ whose sides are parallel to the axes of this coordinate system   and such that $Q\subset B$.  As above, the  side length of $Q$ is denoted by $s(Q)$. Our choice of  $\tau$ implies that $\partial\Omega$ does not intersect the two faces of the cube $Q$ which are orthogonal to $e_d$, moreover, $\partial\Omega\cap Q$ is contained in the middle part $\{(x',x'')\in Q: |x''-x_Q''|<s(Q)/4\}$ of $Q$. 
 
 Let $k\ge 3$. We partition the projection $\pi(Q)$ of $Q$ to the hyperplane $\R^{d-1}\times\{0\}$ into $2^{k(d-1)}$ small equal cubes $w$ with the side length $s(w)=2^{-k}s(Q)$ in the usual way so that any two distinct small cubes  have no common inner points. For each small cube $w$, there is a uniquely defined $d$-dimensional cube $q$ such that $\pi(q)=w$ and the center of $q$ lies on $\partial\Omega\cap Q$.  Furthermore, we cover $(\pi^{-1}(w)\cap(\Omega\cap Q))\setminus{q}$ by at most $2^k$ cubes $p$ such that $p\subset Q$, $p$, $\pi(p)=w$, $p$ has no common inner points with $q$, and $s(p)=s(q)=2^{-k}s(Q)$, cubes $p$ may overlap.  See Figure \ref{f:1}.

 We denote the set of all boundary cubes $q$ by $\mathcal{B}_k(Q)$ and the set of all inner cubes $p$ by $\mathcal{I}_k(Q)$. Note that for each $p\in\mathcal{I}_k(Q)$, we have $\dist(p,\partial\Omega)>c s(p)$ for some absolute constant $c$. We call the triple $(Q,\mathcal{B}_k(Q), \mathcal{I}_k(Q))$ the standard construction. After we fix a  coordinate system, our standard construction depends on the choice of the cube $Q$ and the parameter $k$, the family $\mathcal{B}_k(Q)$ of the boundary cubes is defined uniquely and we may fix some choice for the inner cubes $\mathcal{I}_k(Q)$.  
  
 \subsection{The first hyperplane lemma}
In the first lemma we assume that the maximal doubling index $N_h^*(Q)$ is large enough.
\begin{lemma}\label{l:hp} 
There exist constants $k_0\ge 3$ and $N_0\ge 1$ such that for any integer $k\ge k_0,$ there exists $\tau(k)>0$ for which the following statement holds. Suppose that $\Omega$ is a  domain in $\R^d$, $\partial\Omega\cap B\in Lip(\tau)$, $\tau<\tau(k),$ and 
  $Q\subset \frac{1}{64}B$ is a cube as above centered on $\partial\Omega$.   Then for any function  $h\in C(\overline{\Omega})$  harmonic in $\Omega$, with  $h=0$ on $B\cap\partial\Omega$, and  $N^*_h(Q)>N_0$, 
   there exists  a cube $q\in \mathcal{B}_k(Q)$  such that $N^*_h(q)\le N_h^*(Q)/2$.
\end{lemma}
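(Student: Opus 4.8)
The plan is to argue by contradiction: suppose that $N_h^*(q) > N_h^*(Q)/2$ for \emph{every} boundary cube $q \in \mathcal{B}_k(Q)$, and derive a contradiction with the definition of $N_h^*(Q)$ when $N_0$ and $k_0$ are chosen suitably and $\tau$ is small depending on $k$. The intuition is the standard ``propagation of smallness along a hyperplane'' mechanism from \cite{L1}: a large doubling index in a cube forces the sup-norm of $h$ to decay very fast under one subdivision (via the three-ball inequality, here Lemma \ref{l:three1}), and if this happened simultaneously over all the $\sim 2^{k(d-1)}$ boundary sub-cubes sitting along the piece of $\partial\Omega$ inside $Q$, one would be able to iterate the three-ball/Harnack-type comparison from a deep interior point out to the scale of $Q$ and conclude $N_h^*(Q)$ is also huge — but we can also bound $N_h^*(Q)$ from above in terms of the worst $N_h^*(q)$, and the book-keeping will close.

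More concretely, the main steps would be as follows. \textbf{Step 1 (choice of coordinates and the chain of cubes).} Fix the coordinate system as in Remark \ref{r:1}, and fix $x_0 \in Q \cap \overline{\Omega}$ and $r \in [\ell/2, \ell]$ (with $\ell = \diam Q$) realizing (up to a tiny loss) the supremum defining $N_h^*(Q)$; by Corollary \ref{cor:x12} it suffices to control $N_h$ at a single well-chosen center and scale. \textbf{Step 2 (from large doubling index to fast decay on each boundary cube).} For each $q \in \mathcal{B}_k(Q)$, the hypothesis $N_h^*(q) > N_h^*(Q)/2 > N_0/2$ combined with Lemma \ref{l:three1} applied at the scale $s(q) = 2^{-k}s(Q)$ gives $\sup_{4q \cap \Omega}|h| \ge e^{c N_0}\sup_{q\cap\Omega}|h|$ for an absolute $c>0$; equivalently the sup-norm of $h$ over a slightly shrunk concentric cube is smaller by a definite large factor. \textbf{Step 3 (comparison to the interior).} Using the almost-monotonicity Lemma \ref{l:mon} / Corollary \ref{cor:x12} and the three-ball inequality, transport the information: compare $\sup$ of $|h|$ on the inner cubes $\mathcal{I}_k(Q)$, which are at distance $\gtrsim s(q)$ from $\partial\Omega$, to $\sup$ on neighboring boundary cubes, and chain along a path of overlapping cubes of comparable size from a fixed interior ball (say $B(x_0 + c\,s(Q)\,e_d, c\,s(Q))$, which lies well inside $\Omega$ by the Lipschitz-with-small-constant assumption) out to any boundary cube. \textbf{Step 4 (upper bound for $N_h^*(Q)$ and the contradiction).} Assemble the chain to estimate $H_h(x,2\ell)/H_h(x,\ell)$ from below by the product of the per-step decay factors; if $N_0$ is large enough and $k \ge k_0$ is fixed first, the number of steps in the chain (which is polynomial in $2^k$) is overwhelmed by the exponential factor $e^{cN_0}$ from Step 2, forcing $N_h^*(Q)$ itself to exceed $N_h^*(Q)$ — absurd. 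Taking $\tau(k)$ small enough so that all of Lemmas \ref{l:star}, \ref{l:mon}, \ref{l:three1} and Corollary \ref{cor:x12} apply on the relevant sub-balls (all contained in $B$ since $Q \subset \frac{1}{64}B$) completes the argument.

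The main obstacle, and the place requiring genuine care, is \textbf{Step 3}: near a merely Lipschitz boundary we do not have clean monotonicity of the doubling index over all scales, only the weak almost-monotonicity of Lemma \ref{l:mon} with a multiplicative loss $(1+\varepsilon)$ per doubling. So the chaining argument must be set up so that only boundedly many (in terms of $k$) applications of Corollary \ref{cor:x12} are used, keeping the accumulated factor $(1+\varepsilon)^{O(k)}$ under control; this is exactly why $\varepsilon$ (hence $\tau_\varepsilon$, hence $\tau(k)$) is allowed to depend on $k$, and why $k_0$ must be fixed \emph{before} $N_0$. A secondary technical point is handling boundary cubes $q$ for which $q\cap\Omega$ is small: there one uses the subharmonicity estimate \eqref{eq:subh} together with star-shapedness (Lemma \ref{l:star}) to still make sense of, and bound, $N_h^*(q)$. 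One should also keep the additive ``$+1$''-type error terms from the inequalities (e.g. the $\log_2(1+\varepsilon)\le 2\varepsilon$ bound) from accumulating, which again is fine because their number depends only on $k$ while the gain is $\propto N_0$.
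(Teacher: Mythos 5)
The framework you set up — argue by contradiction, use the almost-monotonicity of the doubling index to iterate decay near each boundary cube $q$ (so that $h$ is exponentially small, in $N$, near the boundary), and then close by comparing to the doubling index at scale $\ell$ — is indeed the skeleton of the paper's argument. But Steps 3 and 4 are the heart of the matter, and the mechanism you propose there, chaining the three-ball inequality through a path of overlapping cubes, genuinely does not work and cannot be fixed without a new ingredient. The three-ball inequality propagates an upper bound only with a $1/3$-power loss per step: if $\sup_{B_0}|h|\le\epsilon$ and $\sup_{4B_0}|h|\le M$, then $\sup_{\frac32 B_0}|h|\le C\epsilon^{1/3}M^{2/3}$, and after $j$ iterations the gain on the exponent has shrunk to a factor $3^{-j}$. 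To travel from the scale $2^{-k}\ell$ of the boundary cubes to the scale $\ell$ of $Q$ takes on the order of $k$ steps, so the cumulative gain in the exponent of the smallness is only $\sim N\,3^{-O(k)}$, which is \emph{exponentially small} in $k$ and therefore useless — you would need it to \emph{grow} with $k$ in order to beat the fixed dimensional constants, and the inequality goes the wrong way.

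What actually closes the argument in the paper is the quantitative Cauchy uniqueness theorem (Lemma \ref{l:Cauchy}), and this is what is missing from your proposal. The key observation is that the ``upper quarters'' $q^+$ of the boundary cubes cover a flat $(d-1)$-dimensional slice $\Gamma_0$ sitting at distance $\sim 2^{-k}\ell$ above $\partial\Omega$; on this slice not only is $|h|$ exponentially small in $N$ (from the iterated doubling index decay, your Step 2), but the Cauchy estimate applied in the balls $B(y,2^{-k}s(Q)/10)\subset\Omega$ shows $|\nabla h|$ is also small there. Smallness of \emph{both} $h$ and $\nabla h$ on a hypersurface is precisely the Cauchy data needed to invoke Lemma \ref{l:Cauchy}, which propagates this to smallness of $h$ on a full half-ball $\frac13 B_{0,+}$ with only a power-$\gamma$ loss — so the gain in the exponent is $\sim \gamma N k/5$, \emph{linear} in $k$. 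That linear gain is what overwhelms the fixed $O(1)$ cost of comparing $\int_{B_2}h^2$ to $\int_{B_1\cap\Omega}h^2$ and forces $N\le N_0$ once $k$ is chosen large enough (with $N_0$ then determined by $k_0$, $\gamma$, $d$). Without this boundary-to-interior propagation you have no way to turn smallness of $h$ near $\partial\Omega$ into a bound on $N_h^*(Q)$, and the argument does not close. Your remark about boundary cubes $q$ with small $q\cap\Omega$ is a non-issue: since $q$ is centered on $\partial\Omega$ and $\tau<(16\sqrt d)^{-1}$, $|q\cap\Omega|\gtrsim|q|$ automatically.
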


\begin{proof} 
	Let $x_Q$ be the center of the cube $Q$ and let $B_1=B(x_Q,\ell)$, where $\ell=\diam(Q)$. We have  $B_1\subset B$ and define  $M^2=\int_{B_1\cap\Omega}h^2$.
	
	Denote  $N=N_h^*(Q)$ and suppose  that  the inequality  $N^*_h(q)>N/2$ holds for each cube $q\in\mathcal{B}_k(Q)$. 
	Then for each such $q$, there exist $y_q\in q\cap\overline{\Omega}$ and $r_q\in(2^{-k-1}\ell, 2^{-k}\ell)$ such that $N_h(y_q,r_q)>N/2$. Suppose that 
	\begin{equation}\label{eq:tauepsilon}
	\tau<\tau_\varepsilon,\quad{\text{and}}\quad (1+\varepsilon)^k<2,
	\end{equation}
	where we use the notation of Lemma \ref{l:mon}.
Then the almost monotonicity of the doubling index, Lemma \ref{l:mon}, implies
	$N_h(y_q, 2^mr_q)>N/4$ when $0\le m\le k$.
		
	Assuming that $k\ge 20$, we apply  the estimate of  the doubling index $k-4$ times and use that $B(y_q, \ell/2)\subset B_1$ to obtain
	\begin{multline*}
		\int_{B(y_q, 2^{-k+2}\ell)\cap\Omega} h^2\le \int_{B(y_q, 8r_q)\cap\Omega} h^2\le e^{-N(k-4)/4}\int_{B(y_q,2^{k-1}r_q)\cap\Omega}h^2\\ \le e^{-N(k-4)/4}\int_{B(y_q,\ell/2)\cap\Omega}h^2
		\le e^{-Nk/5}M^2.
		\end{multline*}
	Next, we note that the integral estimate above implies a pointwise estimate in a smaller ball by \eqref{eq:subh}. We have 
	\begin{equation}\label{eq:L11}
	\sup_{B(y_q,2^{-k+1}\ell)\cap \Omega}h^2\le C2^{dk}\ell^{-d}\int_{B(y_q,2^{-k+2}\ell)\cap\Omega}h^2\le C2^{dk}\ell^{-d}e^{-Nk/5}M^2,
	\end{equation}
where $C=C(d)$.
	
	As above, we  assume also that $\tau< (16\sqrt{d})^{-1}$.
	 For each cube $q\in\mathcal{B}_k(Q)$,  
	 denote by $q^+$ its upper quarter, where "up" is in the direction of  $e_d$.  Then $q^+\subset\Omega$ and $\dist(q^+, \partial\Omega)\ge 2^{-k}s(Q)/10$.  For $y\in q^+$, the standard Cauchy estimate implies 
	\[|\nabla h(y)|\le C2^k\ell^{-1}\sup_{B(y, 2^{-k}s(Q)/10)}|h|.\]
	We note that $B(y, 2^{-k}s(Q)/10)\subset B(y_q, 2^{-k+1}\ell)\cap\Omega$. Then combining the above inequality with \eqref{eq:L11}, we obtain
	\begin{equation}\label{eq:L12}
	\sup_{q^+}|\nabla h|\le  C 2^k\ell^{-1}\sup_{B(y_q, 2^{-k+1}\ell)\cap\Omega}|h|\le C2^{k(d+2)/2}\ell^{-(d+2)/2}e^{-Nk/10}M.
	\end{equation}

	Let $B_0=B(x_Q+3\cdot2^{-k-3}s(Q)e_d, s(Q)/2)$ 
	and let 
	\[B_{0,+}=\{x=(x',x'')\in B_0: x''\ge x''_Q+3\cdot 2^{-k-3}s(Q)\}\] be the upper half of $B_0$.
	We denote by  $\Gamma_{0}$ the flat part of the boundary of $B_{0,+}$. We note that $2B_0\subset B_1$. Assuming that  $\tau<2^{-k-3}$, we have $\dist(B_{0,+},\partial\Omega)\ge 2^{-k-2}s(Q)$. 
	Then using  \eqref{eq:subh} and the Cauchy estimate, we get
	\[\sup_{B_0\cap\Omega}|h|\le C\ell^{-d/2}M,\quad \sup_{B_{0,+}}|\nabla h|\le C2^k \ell^{-d/2-1}M. \]
	Also, by \eqref{eq:L11} and \eqref{eq:L12}, we have
	\[\sup_{\Gamma_0}|h|\le C2^{kd/2}\ell^{-d/2}e^{-Nk/10}M,\quad \sup_{\Gamma_{0}}|\nabla h|\le C2^{kd/2+k}\ell^{-d/2-1}e^{-Nk/10}M,\]
	since $\Gamma_0\subset\bigcup_{q\in \mathcal{B}_k(Q)}q^+$.
	
  Applying 
	Lemma \ref{l:Cauchy}  to $B_{0,+}$,  we get 
	\[\sup_{\frac13 B_{0,+}}|h|\le C 2^{\gamma kd/2+k}\ell^{-d/2}e^{-\gamma Nk/10}M.\]
	Let $y_Q=x_Q+s(Q) e_d/12$ and let $m$ be the least integer such that $2^m>16\sqrt{d}$.  Then $B_2=B(y_Q, 2^{-m}\ell)\subset \frac 13 B_{0,+}$ when $k$ is large enough (we remark that $B_{0,+}$ depends on $k$). Integrating the last inequality over $B_2$ and using that $vol(B_2)\le C\ell^d$, we obtain
	\[\int_{B_2} h^2\le C2^{\gamma kd+2k}e^{-\gamma Nk/5}M^2.\]
	
	Finally, we compare the last integral to  $\int_{B_1\cap \Omega} h^2=M^2$.   Note that $B_1\subset B(y_Q, 2\ell)=2^{m+1}B_2$.  By the almost monotonicity of the doubling index, recalling that $\tau<\tau_\varepsilon<\tau_1$,  we have 
		\begin{multline*}
		2^{m+1}N_h(y_Q, \ell)\ge \sum_{j=0}^m N_h(y_Q, 2^{-j}\ell)= \log \frac{\int_{B(y_Q,2\ell)\cap\Omega}h^2}{\int_{B_2}h^2}\\
		\ge \log \frac{\int_{B_1\cap\Omega}h^2}{\int_{B_2}h^2}\ge \gamma N k/5-\gamma kd-2k-C.
		\end{multline*}
	Since $N_h(y_Q,\ell)\le N^*_h(Q)=N$, we get $2^{m+1}N\ge \gamma Nk/5-\gamma kd-2k-C$.	
	Taking $k$ large enough we may achieve  $\gamma k/5>2^{m+2}$. Then the inequality above implies
	\[N\le \frac{10(\gamma kd+2k+C)}{\gamma k}\le 10\left(d+(2+C)\gamma^{-1}\right).\]
	 Taking $N_0=10\left(d+(2+C)\gamma^{-1}\right)$, we obtain a contradiction  for  $N>N_0$.  We also choose $\varepsilon=\varepsilon(k)$ such that $(1+\varepsilon)^k<2$  and finally choose $\tau(k)=\min\{\tau_\varepsilon,2^{-k-3}, (16\sqrt{d})^{-1}\}$.
	
	\end{proof}

\subsection{The second hyperplane lemma: cubes without zeros}
For cubes with the maximal doubling index bounded by $N_0$, we use the following version of the above statement.  The reader may compare it to Corollary 3.4.4 in \cite{LM}.

\begin{lemma}\label{l:hp1} 
 For any $N>0$ there exist $\tau(N)$ and $k(N)$  such that the following statement holds.
 Suppose that  $\Omega$ is a domain in $\R^d$, $\partial\Omega\cap B\in Lip(\tau)$,  $\tau<\tau(N)$, and 
$Q\subset \frac{1}{64}B$ is a cube centered on $\partial\Omega$. Let also $h\in C(\overline{\Omega})$ be  a non-zero function harmonic in $\Omega$, with  $h=0$ on $B\cap\partial\Omega$ and  $N^*_h(Q)\le N$. 
Then for any $k\ge k(N)$, there exists $q\in \mathcal{B}_{k}(Q)$  such that $Z(h)\cap q=\varnothing$.
\end{lemma}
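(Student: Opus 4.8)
The plan is to argue by contradiction: assume that for \emph{every} boundary cube $q\in\mathcal{B}_k(Q)$ the zero set $Z(h)$ meets $q$, and derive a lower bound on the doubling index that eventually contradicts $N^*_h(Q)\le N$ once $k$ is taken large (depending on $N$) and $\tau$ small. The mechanism is a propagation-of-smallness argument running from a fixed inner ball $B_2=B(y_Q,c\ell)$ sitting well inside $\Omega$ (as in the proof of Lemma \ref{l:hp}) down to the boundary layer occupied by the cubes $q\in\mathcal{B}_k(Q)$. Here $\ell=\diam(Q)$ and $M^2=\int_{B_1\cap\Omega}h^2$ with $B_1=B(x_Q,\ell)$, and by almost-monotonicity of the doubling index (Lemma \ref{l:mon}, applied as in Lemma \ref{l:hp}), the bound $N^*_h(Q)\le N$ forces $\int_{B_2}h^2\ge e^{-C(N)}M^2$, so $h$ is \emph{not} uniformly small on $B_2$ — say $\sup_{B_2}|h|\ge \ell^{-d/2}e^{-C(N)}M$ after using \eqref{eq:subh}.

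Next I would transport this lower bound outward to a ball $B_0\cap\Omega$ of radius comparable to $s(Q)$ centered near the boundary. Using the three ball inequality near the boundary (Lemma \ref{l:three1}) a bounded (depending only on geometry, not on $N$ or $k$) number of times to chain from $B_2$ out to $B_0$, one gets $\sup_{B_0\cap\Omega}|h|\ge \ell^{-d/2}e^{-C(N)}M$. On the other hand, if every boundary cube $q$ contains a zero of $h$, then on each such $q$ both $h$ and $\nabla h$ are controlled: $h$ vanishes somewhere in $q$, and by the Cauchy/gradient estimate applied in the sub-cube $q^+$ (which lies in $\Omega$ at distance $\gtrsim 2^{-k}s(Q)$ from $\partial\Omega$, exactly as in Lemma \ref{l:hp}) together with the subharmonicity bound \eqref{eq:subh}, we get $\sup_{q^+}|h|\le C2^k \ell^{-d/2}M\cdot(\text{diam }q)$ roughly, hence $\sup_{q^+}(|h|+\ell|\nabla h|)\le C 2^{-k}\cdot(\text{const depending on }d)\cdot\sup_{2B_0\cap\Omega}|h|$; more precisely $h$ is of size $O(2^{-k})$ relative to the global scale on the flat strip $\Gamma_0\subset\bigcup_{q\in\mathcal{B}_k(Q)}q^+$. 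Feeding this into the quantitative Cauchy uniqueness estimate, Lemma \ref{l:Cauchy}, applied to the upper half-ball $B_{0,+}$ with $\varepsilon\sim 2^{-k}$ on its flat face $\Gamma_0$ (after normalizing so $|h|,|\nabla h|\le 1$ on $B_{0,+}$, which costs another geometric factor), yields $\sup_{\frac13 B_{0,+}}|h|\le C 2^{-\gamma k}\cdot(\text{normalizing factor})$. Since $B_2\subset \frac13 B_{0,+}$ for $k$ large, this says $\sup_{B_2}|h|\le C2^{-\gamma k}\ell^{-d/2}M$ (tracking the polynomial-in-$2^k$ losses, which are dominated by $2^{-\gamma k}$ once $k$ is large).

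Comparing the two estimates for $\sup_{B_2}|h|$ gives $\ell^{-d/2}e^{-C(N)}M\le C2^{-\gamma k}\ell^{-d/2}M$, i.e. $e^{-C(N)}\le C2^{-\gamma k}$, which is false as soon as $k\ge k(N)$ for a suitable $k(N)$. This is the contradiction, so some $q\in\mathcal{B}_k(Q)$ must be free of zeros of $h$. The constants are chosen in the order: $\varepsilon=\varepsilon(k)$ with $(1+\varepsilon)^k<2$ to make Lemma \ref{l:mon} usable, then $\tau(N)=\min\{\tau_\varepsilon, 2^{-k(N)-3},(16\sqrt d)^{-1}\}$, exactly as in Lemma \ref{l:hp}.

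I expect the main obstacle to be bookkeeping the normalization in the application of Lemma \ref{l:Cauchy}: that lemma is stated for $|h|,|\nabla h|\le 1$ in $B_+$ and $|h|,|\partial_d h|\le\varepsilon$ on $\Gamma$, so one must divide $h$ by $\sup_{B_{0,+}}(|h|+|\nabla h|)\sim 2^k\ell^{-d/2}M$, rescale $B_{0,+}$ to the unit half-ball, verify that the hypothesis $h=0$ (hence $\partial_{\text{tangential}}h=0$) on the truly flat part $\Gamma_0$ holds only up to the $2^{-k}$ error coming from the fact that $\partial\Omega$ is merely $\tau$-Lipschitz and not flat — this is where $\tau<2^{-k-3}$ enters, guaranteeing $\dist(B_{0,+},\partial\Omega)\gtrsim 2^{-k}s(Q)$ so that $\Gamma_0$ genuinely sits in $\Omega$ and the boundary-vanishing of $h$ is only \emph{used} on the cubes $q$, not on $\Gamma_0$ itself. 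Making sure all the polynomial prefactors in $2^k$ are genuinely absorbed by $e^{-C(N)}$ versus $2^{-\gamma k}$ — i.e. that $k(N)$ can be chosen \emph{after} $N$ — is the one place where the logic of quantifier order must be checked carefully, but it goes through because $C(N)$ is independent of $k$.
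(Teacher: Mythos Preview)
There is a genuine gap. Your contradiction hypothesis ``$Z(h)\cap q\neq\varnothing$ for every $q\in\mathcal{B}_k(Q)$'' is never used to produce the smallness that Lemma \ref{l:Cauchy} requires. Knowing that $h$ vanishes at \emph{one interior point} of each boundary cube does not make $h$ or $\partial_d h$ small on the flat face $\Gamma_0$. Indeed, the gradient estimate on $q^+$ gives only $|\nabla h|\lesssim 2^k\ell^{-1}\sup_{2B_0\cap\Omega}|h|$, so integrating from the zero $z_q$ to $\Gamma_0$ (over a distance $\sim 2^{-k}\ell$) yields at best $|h|\lesssim \sup_{2B_0\cap\Omega}|h|$ on $\Gamma_0$, which is the trivial bound and carries no factor $2^{-k}$. (Even this step is dubious, since $z_q$ may lie below $q^+$ where no gradient bound is available.) More to the point, nothing in your argument bounds $|\partial_d h|$ on $\Gamma_0$ by anything small: the normal derivative can be of unit size there (think of $h(x)=x''$ in a half-space). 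Thus after normalization the $\varepsilon$ in Lemma \ref{l:Cauchy} is of order $1$, and the conclusion $|h|\le C\varepsilon^\gamma$ is vacuous. In Lemma \ref{l:hp} the mechanism was completely different: the doubling hypothesis $N_h^*(q)>N/2$ forced $\int_{cq}h^2\le e^{-Nk/5}M^2$, a genuine exponential smallness that then fed into the Cauchy estimate. The hypothesis ``$h$ has a zero in $q$'' is qualitatively much weaker and cannot play the same role.

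The paper's proof is not by contradiction and follows a different route. It flattens the boundary by passing to an auxiliary harmonic function $g$ on a true half-ball $B_{1,+}$ with $|g-h|\le C\tau$ on $\Omega\cap B_1$ (controlled via a barrier). The bound $N_h^*(Q)\le N$ transfers to a bound on $N_g(x_1,\frac14)$, and then a half-ball lemma (Lemma \ref{l:toyl}, proved by odd reflection and Cauchy uniqueness) locates a boundary point where $|\partial_d g|$ is bounded \emph{below} by $c(N)$, hence $|g(y)|\ge c(N)y''$ in a small ball. Transferring back to $h$ gives $h(y)\ge c(N)y''-C\tau$, and a comparison with an explicit harmonic polynomial then shows $h>0$ in a ball of radius $\rho(N)/2$ containing a cube $q\in\mathcal{B}_k(Q)$ once $k\ge k(N)$. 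The argument is constructive: it finds the zero-free cube rather than ruling out the alternative.
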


We remark that in this version both $\tau$ and  $k$ depend on $N$. First, we prove the following version of the lemma for a half ball.

\begin{lemma}\label{l:toyl}
Let $B$ be the unit ball in $\R^d$ and let $B_+$ be the half ball,
\[B_+=\{y=(y',y'')\in\R^{d-1}\times\R: |y'|^2+y''^2<1, y''>0\}.\]
 Let $g$ be a function harmonic in $B_+$, $g\in C(\overline{B}_+)$,  $g=0$ on $\overline{B}_+\cap \{y''=0\}$, and 
 \[\sup_{\frac{1}{4} B_+}|g|=1.\]
 For any $N>0$, there exist $\rho=\rho(N)\in(0,1/16)$ and $c_0=c_0(N)>0$ such that if $N_g(0, 1/4)\le N$, then there is $x'\in\R^{d-1}$ with $|x'|<1/16$ such that 
 \[|g(y)|\ge c_0y'',\quad {\text{for any}}\quad  y=(y', y'')\in B((x',0), \rho)\cap B_+.\]
\end{lemma}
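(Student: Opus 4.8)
The plan is to reduce the statement to a compactness-and-normal-families argument combined with the structure of boundary behavior of harmonic functions vanishing on a flat piece of the boundary. First I would record the following reflection principle: since $g$ is harmonic in $B_+$, continuous up to the flat part $\Gamma=\{y''=0\}$, and vanishes there, the odd reflection $\tilde g(y',y'') = g(y',y'')$ for $y''>0$, $\tilde g(y',y'')=-g(y',-y'')$ for $y''<0$, is harmonic in the full ball $B$ (up to a harmless shrinking of the radius, working in $\frac12 B$). Then $\partial_{d}\tilde g$ is harmonic in $\frac12 B$, and the hypothesis $\sup_{\frac14 B_+}|g|=1$ together with the doubling bound $N_g(0,1/4)\le N$ gives, via the three ball inequality \eqref{eq:threeh} applied to $\tilde g$ (or directly from \eqref{eq:subh} and interior estimates), uniform bounds on $\tilde g$ and all its derivatives on, say, $\frac18 B$, with constants depending only on $N$ and $d$. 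In particular $g(y)=y''\,\partial_d g(y',0)+O((y'')^2)$ near $\Gamma$, with the error controlled uniformly.

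Second, I would set $G(y') := \partial_d g(y',0) = \partial_d\tilde g(y',0)$, a harmonic function of $y'\in\frac18 B^{d-1}$ (restriction of a harmonic function to a hyperplane need not be harmonic — so more precisely $G$ is real-analytic on $\frac18 B^{d-1}$ with uniform bounds, which is all I need). The key claim is that $G$ cannot be too small everywhere on a slightly smaller ball: if $\sup_{|y'|<1/8}|G|$ were tiny, then $|g(y)|\le (\text{tiny})\,y'' + C(N)(y'')^2$ throughout a neighborhood of $\Gamma$, and combined with the interior gradient bound this would force $\sup_{\frac14 B_+}|g|$ to be strictly less than $1$ — but I must be careful, since the sup is attained at $\frac14 B_+$, not near $\Gamma$. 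The cleaner route is a normal-families argument: suppose the lemma fails for some fixed $N$; then there is a sequence $g_j$ satisfying all hypotheses for which no admissible $(x',\rho,c_0)$ works, i.e. for every $x'$ with $|x'|<1/16$ and every $\rho,c_0>0$ the conclusion fails. By the uniform bounds, pass to a subsequence with $\tilde g_j\to \tilde g$ in $C^\infty_{loc}(\frac18 B)$; the limit $g$ is harmonic in $\frac18 B_+$, vanishes on $\Gamma$, has $\sup_{\frac14 B_+}|g|=1$ (the normalization passes to the limit by uniform convergence), and has $N_g(0,1/4)\le N$. Since $g\not\equiv 0$, Cauchy uniqueness (Lemma \ref{l:Cauchy}, or the real-analyticity of $G$) forces $G=\partial_d g(\cdot,0)\not\equiv 0$, so there is $x'_0$ with $|x'_0|<1/16$ and $|G(x'_0)|=:2c>0$; by continuity $|G|\ge c$ on $B^{d-1}(x'_0,\rho_0)$ for some $\rho_0$, and then $|g(y)|\ge c y'' - C(N)(y'')^2\ge \frac{c}{2}y''$ on $B((x'_0,0),\rho)\cap B_+$ for $\rho$ small enough. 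The same $(x'_0,\rho,c/2)$ then works for all large $j$ by the $C^\infty_{loc}$ convergence (shrinking $\rho$ and $c$ slightly), contradicting the choice of the sequence.

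The main obstacle I expect is bookkeeping around the normalization $\sup_{\frac14 B_+}|g|=1$: it is a constraint on a set disjoint from $\Gamma$, so it does not directly prevent $G$ from vanishing, and one must genuinely use the harmonicity of $g$ (through reflection and Cauchy uniqueness) to propagate nondegeneracy from the interior to the boundary. A secondary technical point is that $N_g(0,1/4)$ involves integrals over $B(0,r)\cap B_+$, and I need this quantity to be lower semicontinuous (or at least to have its upper bound preserved) under the $C^\infty_{loc}$ limit — this follows because the integrands $g_j^2$ converge uniformly on the relevant compact sets and the domains $B_+\cap B(0,r)$ are fixed, so $H_{g_j}(0,r)\to H_g(0,r)$ for each $r\le 1/4$, giving $N_g(0,1/4)\le N$ in the limit (here one should note $H_g(0,r)>0$ since $g\not\equiv0$). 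Everything else — the uniform derivative bounds, the quadratic Taylor remainder, the final shrinking of $\rho$ and $c_0$ — is routine once the reflection and compactness framework is in place.
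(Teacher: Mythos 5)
Your proof is correct, but it takes a genuinely different route from the paper's. The paper's argument is quantitative: after the odd reflection and Cauchy estimates it introduces $\delta=\max_{|x'|\le 1/16}|\nabla g(x',0)|$, applies the quantitative Cauchy uniqueness Lemma \ref{l:Cauchy} on $\tfrac{1}{16}B_+$ to get $\sup_{B(0,1/64)}|g|\le C\delta^\gamma$, and then uses the doubling bound (via the chain $N_g(0,2^{-j})\le N_g(0,1/4)\le N$) to conclude $\delta\ge c\,e^{-3N/\gamma}$, which yields explicit $\rho(N)$ and $c_0(N)$ that decay exponentially in $N$. Your argument instead assumes failure for a diagonal sequence $\rho_j=c_{0,j}=1/j$, extracts a normal-families limit, and uses only the \emph{qualitative} Cauchy uniqueness (equivalently, real-analyticity of the reflected function) to see that $\partial_d g(\cdot,0)\not\equiv 0$ in the limit; this gives existence of $(\rho(N),c_0(N))$ without any explicit rate. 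Both suffice for the paper's purposes, and yours arguably needs less machinery, but it sacrifices the quantitative dependence the paper gets essentially for free. Two small technical points that you should tighten: (i) you pass to the limit in $C^\infty_{loc}(\tfrac18 B)$ but then invoke $\sup_{\frac14 B_+}|g|=1$ and $N_g(0,1/4)\le N$ for the limit, both of which involve the function on $\tfrac12 B_+\not\subset\tfrac18 B$; since $\tilde g_j$ is harmonic in the full ball $B$ with uniformly bounded $L^2$ norm on $\tfrac12 B$ (from the normalization and the doubling bound), you can and should take the limit in $C^\infty_{loc}(\tfrac12 B)$, after which $\overline{\tfrac14 B_+}\subset\tfrac12 B$ is compact and the normalization passes; (ii) you do not actually need $N_g(0,1/4)\le N$ to hold in the limit --- all you need is $g\not\equiv 0$, which follows directly from the uniform lower bound $\int_{\frac18 B_+}g_j^2\ge c(N)>0$, obtained by applying the doubling bound twice to pull the normalization's $L^2$ mass from $\tfrac12 B_+$ down to the compactly contained $\tfrac18 B_+$.
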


\begin{proof} 
Let $B_-$ be the reflexion of the half-ball $B_+$ with respect to the hyperplane $y''=0$. Then $g$ can be extended to a harmonic function in  $B$  by $g(y',y'')=-g(y',-y'')$ when $(y',y'')\in B_-$. We denote this extension by $g$ as well. 
The normalization $\sup_{\frac{1}{4} B_+}|g|=1$ and the standard Cauchy estimate imply that every partial derivative of $g$ is uniformly bounded in $B(0, 1/8)$. 

Let $\delta=\max_{x'\in\R^{d-1},|x'|\le 1/16}|\nabla g(x',0)|$. Lemma \ref{l:Cauchy}, applied to the half ball $\frac{1}{16}B_+$ implies that
\[\sup_{B(0,\frac1{64})}|g|\le C\delta^\gamma.\]
Then $\int_{B(0, \frac{1}{64})}g^2\le C\delta^{2\gamma}$ and $\int_{B(0,\frac12)}g^2\ge c\sup_{B(0,\frac14)}g^2=c$.  On the other hand, 
\[\log\frac{\int_{B(0,\frac12)}g^2}{\int_{B(0,\frac{1}{64})} g^2}
\le 5N_g(0,\frac14)\le 5N.\]
We have used that the doubling index of $g$ in $B_+$ and of the extension are the same for balls centered at the origin and that the doubling index inside the domain is monotone by \eqref{eq:monh}.
We conclude that $\delta\ge ce^{-3N\gamma^{-1}}$.

Let $x'_*\in \R^{d-1}$, $|x'_*|\le 1/16$, be such that $|\nabla g(x'_*,0)|=\delta$. Clearly we have $|\nabla g(x'_*,0)|=|\partial_d g(x'_*,0)|$ and we may assume that $\partial_d g(x_*',0)=\delta$, otherwise we consider the function $-g$. Then  $\partial_d g(x)>\delta/2$ when $\dist(x,(x'_*,0))<\rho=\min\{c_0\delta, 1/16\}$, where $c_0$ depends on the constant upper bound for the second derivatives of $g$ in $B(0,1/8)$. Therefore 
\[g(y)\ge \delta y''/2\ge ce^{-3N\gamma^{-1}}y'',\] when $y=(y',y'')\in B((x_*',0), \rho)$.  
\end{proof}

\begin{proof}[Proof of Lemma \ref{l:hp1}] Now we deduce Lemma \ref{l:hp1} from Lemma \ref{l:toyl}. By rescaling, see Remark \ref{r:2}, we can achieve that $s(Q)=4$. 
We may also assume  that  
\begin{equation}\label{eq:hnorm}
\sup_{B(x_Q, 3)\cap\Omega}|h|=1.
\end{equation} 

Let $x_1=x_Q-3\tau e_d$, $B_1=B(x_1,1)$, and let $B_{1,+}$ be the upper half of $B_1$. Let also $B_2=2B_1$. First, we consider the harmonic function $g_0$ such that $g_0=1$ on the upper half of the sphere $\partial B_2$ and $g_0=-1$ on the lower half of $\partial B_2$. We denote as usual $x_1''=x_1\cdot e_d$.  Clearly $g_0=0$ on 
\[\Gamma_0=\{x=(x',x'')\in B_2: x''=x_1''\}\]
 and $g_0\ge 0$ on $B_{2,+}$. We note that $\Gamma_0$ does not intersect $\overline{\Omega}$. Then $|h|\le g_0$ on $\Omega\cap B_2\subset B_{2,+}$ by the maximum principle. We also have $g_0(x)\le C_1 (x''-x''_1)$ when $x=(x',x'')\in B_{1,+}$, since $g_0=0$ on $\Gamma_0$ and $g_0$ has bounded derivatives in $B_1$. Therefore $|h(x)|\le C_1(x''-x''_1)$ when $x=(x',x'')\in \Omega\cap B_1$.

Let now $g$ be the harmonic function in $B_{1,+}$ such that $g=h$ on $\partial B_{1,+}\cap\Omega$ and $g=0$ on $\partial B_{1, +}\setminus\Omega$. We have $|g(x)|\le C_1(x''-x_1'')$ in $B_{1,+}$ by the above estimate on $h$ and the maximum principle.
We consider the difference $g-h$. We have $g=h$ on $\Omega\cap\partial B_1$ and $|g-h|=|g|\le 4C_1\tau$ on $\partial\Omega\cap B_1$. Then, by the maximum principle, 
$|g-h|\le 4C_1\tau\quad {\text{in}}\quad \Omega\cap B_1.$ We extend $h$ by zero to $B_{1,+}\setminus \Omega$. Then $|g-h|\le 4C_1\tau$ in $B_{1,+}$.

 Let $m$ be an integer such that $2\sqrt{d}\le 2^m<4\sqrt{d}$, clearly $m\ge 1$. Then the estimate $N^*_h(Q)\le N$ implies $N_h(x_Q,2^m)\le N$. We choose $\varepsilon$ such that $(1+\varepsilon)^{m+3}\le 2$ and assume that $\tau<\tau_\varepsilon$ using the notation of Lemma \ref{l:mon}. Then  $N_h(x_Q,2^{j})\le 2N$ when $-3\le j\le m$.  
 We use \eqref{eq:hnorm} and \eqref{eq:subh} to conclude  that
\[
\int_{ B(x_Q,\frac{1}{8})\cap\Omega}h^2
\ge e^{-10N}\int_{B(x_Q,4)\cap\Omega}h^2\ge ce^{-10N}.
\]
Suppose that $\tau<\frac1{24}$. Then $B(x_Q, \frac18)\cap\Omega\subset \frac{1}{4}B_{1,+}$ and we have
\[
\left(\int_{ \frac{1}{4}B_{1,+}}g^2\right)^{1/2}\ge \left(\int_{\frac{1}{4}B_{1,+}}h^2\right)^{1/2}-C_2\tau\ge  \left(\int_{B(x_Q,\frac{1}{8})\cap\Omega}h^2\right)^{1/2}-C_2\tau.\]
Assuming that $\tau(N)$ is small enough, we conclude that 
\begin{equation}\label{eq:gnorm}
\int_{\frac{1}{4}B_{1,+}} g^2\ge c_1e^{-10N}.
\end{equation} We also have $\sup_{\frac{1}{2}B_{1,+}}|g|\le\sup_{B_1\cap\Omega}|h|\le 1$ by \eqref{eq:hnorm}.
Then
 \[N_g(x_1, \frac{1}{4})=\log\frac{\int_{\frac12B_{1,+}}g^2}{\int_{\frac14B_{1,+}}g^2}\le C(N+1).\]
 
 We note that \eqref{eq:gnorm} implies $\sup_{\frac14B_{1,+}}|g|\ge ce^{-5N}$. Then, by Lemma \ref{l:toyl}, there exist  $x_*\in \Gamma_0\cap \frac{1}{16}B_1$, $c_2=c_2(N)>0$, and $\rho=\rho(C(N+1))$ such that
\[|g(x)|\ge c_2(x''-x_1'')\quad {\text{for}}\quad x=(x',x'')\in B(x_*,\rho)\cap B_{1,+}.\]
We may assume that $g>0$ in $B(x_*,\rho)\cap B_{1,+}$, otherwise we consider $-h$ in place of $h$. Then we obtain 
\[h(x)\ge g(x)-4C\tau \ge c_2(x''-x_1'')-4C\tau\quad{\text{in}}\quad B(x_*,\rho)\cap \Omega.\]

We note that $\rho$ does not depend on $\tau$ and for $\tau$ small enough we have $B(x_*,\frac{\rho}{4})\cap\partial\Omega\neq\varnothing$. We also have $B(x_*,\frac{\rho}{2})\subset Q$. 

Our goal is to show that $h>0$ on $B(x_*,\frac{\rho}{2})\cap\Omega$. Let $y_*=(y_*',y_*'')\in B(x_*,\frac{\rho}{2})\cap\partial\Omega$. We note that
\begin{equation}\label{eq:n1}
h(x)\ge c_2(x''-y_*'')-c_3\tau \quad{\text{in}}\quad B(x_*,\rho)\cap \Omega,
\end{equation}
where  $c_3=4C+4c_2$. We consider the harmonic function 
\[h_*(x)=\frac{1}{d\rho}\left((d-1)(x''-y''_*)^2-|x'-y'_*|^2\right),\]
where $x=(x',x'')$.
We claim that 
$h(x)\ge c_2 h_*(x),$ when $x\in B\left(y_*,\frac{\rho}{2}\right)\cap\Omega$ and
 $\tau$ is small enough.
 
 First, we note that $h_*(x)\le 0$ if $|x''-y_*''|\le(d-1)^{-1/2}|x'-y_*'|$ and therefore $h_*\le 0$ on  
$\partial\Omega\cap B(y_*,\frac{\rho}{2})$ when $\tau$ is small enough, while $h=0$ on $\partial\Omega\cap B(y_*,\frac{\rho}{2})$.
On $\partial B(y_*, \frac{\rho}{2})\cap\Omega$ we have
\[h_*(x)=\frac{(x''-y_*'')^2}{\rho}-\frac{\rho}{4d}.\]
Comparing \eqref{eq:n1} to the last identity and denoting $t=x''-y_*''$, we reduce the inequality $h\ge c_2h_*$ on $\partial B(y_*,\frac{\rho}{2})\cap\Omega$ to the following one:
\[c_2t-c_3\tau\ge c_2\left(\frac{t^2}{\rho}-\frac{\rho}{4d}\right),\]
when $t\in (-\tau\rho/2, \rho/2)$ and $\tau$ is small enough. It sufficies to check the inequality for $t=-\tau\rho/2$ and $t=\rho/2$.  For $t=-\tau\rho/2$ we obtain the inequality 
\[\frac{c_2\rho}{4d}\ge \tau\left(\frac{c_2\rho}{2}+\frac{c_2\tau\rho}{4}+c_3\right),\]
 which holds when $\tau$ is small enough. On the other hand, for $t=\rho/2$, the inequality is  reduced to 
 \[c_2\rho\left(\frac14+\frac1{4d}\right)\ge c_3\tau.\] This one is also satisfied for small $\tau$.

Thus, by the maximum principle,  $h\ge c_2h_*$ in $B(y_*,\rho/2)\cap\Omega$. In particular, $h(y_*',y'')\ge h_*(y_*', y'')>0$ when $y_*''<y''<\rho/2$. 
Therefore $h>0$ on $B(x_*,\frac{\rho}{2})\cap\Omega$. 

Finally, since $B(x_*,\rho/2)$ contains a ball of radius $\rho/4$ centered on $\partial\Omega$, if $k$ is large enough, there is $q\in\mathcal{B}_k(Q)$ such that $q\subset B(x_*,\frac{\rho}{2})$ and then $Z(h)\cap q=\varnothing$.
\end{proof}

\section{Proof of  Theorem \ref{th:m2}}\label{s:main}

Let $N_0$ be as in Lemma \ref{l:hp} and let $\Omega$, $B=B(x,r)$, and $h$ be as in the statement of Theorem \ref{th:m2}. We remind that the maximal doubling index $N_h^*(Q)$ of  $h$ in a cube $Q$ was defined by \eqref{eq:N*}. For the rest of the proof we modify the maximal doubling index and write $N^{**}_h(Q)=\max\{N^*_h(Q),N_0/2\}$. 
  Then Lemmas \ref{l:hp} and \ref{l:hp1} imply that there is $k$ such that for $\tau$ small enough, if  $Q\subset 2B$ and $(Q,\mathcal{B}_k(Q), \mathcal{I}_k(Q))$ is a standard construction, then there is a cube $q_0\in\mathcal{B}_k(Q)$ such that 
 \begin{equation}\label{eq:stepM}
 {\text{either}}\quad (i)\ N_h^{**}(q_0)<N_h^{**}(Q)/2\quad{\text{or}}\quad (ii)\ Z(h)\cap q_0=\varnothing.
 \end{equation}

\subsection{Reduction to one cube}
Let $Q$ be a cube as above. We claim that
 \begin{equation}\label{eq:dc}
 \h^{d-1}(Z(h)\cap Q)\le CN_h^{**}(Q)s(Q)^{d-1}.
 \end{equation}
 
 Assume first that \eqref{eq:dc} holds. We show that Theorem \ref{th:m2} follows.  We need to switch from cubes to balls and from the maximal doubling index to the doubling index at a single point.
 
    To this end, we cover the ball $B(x,r)$ with cubes $Q_j\subset B(x,2r)$ such that $\diam(Q_j)=r/10$ and either  $\dist(Q_j, \partial\Omega)>s(Q_j)/10$ (inner cubes) or $Q_j$ satisfies the assumptions in the main construction (boundary cubes). We  may assume that there are not more than  $C=C(d)$ of such cubes. 
  
   First, for each  cube $Q=Q_j$ in this cover, we have $Q\cap B(x,r)\neq\varnothing$, and we compare $N_h^*(Q)$ to $N_h(x,4r)$. 
   There exists $y\in Q\cap\overline{\Omega}$  and $r_y\in[r/20, r/10]$ such that $N_h^*(Q)=N_h(y,r_y)$. Assuming that $\tau<\tau_1$ in the notation of Lemma \ref{l:mon}, we get $N_h(y,32r_y)\ge 2^{-5}N_h^*(Q)$.   We have $\dist(x, y)\le \frac{11}{10}r$ and 
 \[N_h(y, 32 r_y)=\log\frac{\int_{B(y, 64 r_y)\cap\Omega}h^2}{\int_{B(y, 32 r_y)\cap\Omega}h^2}\le \log\frac{\int_{B(x, 8r)\cap\Omega}h^2}{\int_{B(x,r/2)\cap\Omega}h^2}\le 16 N_h(x,4r)\]
 by Lemma \ref{l:mon}. Hence, $N^*_h(Q)\le 2^9 N_h(x,4r)$ and $N^{**}_h(Q)\le C(N_h(x,4r)+1)$.
 
 Each inner cube  $Q\subset\Omega$ can be covered by at most $C$ balls $b$  with centers in $Q$ and with radii  $s(Q)/100$. Then $8\overline{b}\subset\Omega$. Moreover,  if $b=B(y, s(Q)/100)$, we have  $N_h(y,s(Q)/25)\le CN_h^{*}(Q)$ by Lemma \ref{l:mon} again.  Then we use  Lemma \ref{l:in} to estimate the area of the zero set of $h$ in each of the balls $b$ and obtain
 \begin{multline*}
 \h^{d-1}(Z(h)\cap b)\le C(N_h(y, s(Q)/25)+1)r^{d-1}\\
 \le C'(N_h^*(Q)+1)r^{d-1}\le C''(N_h(x,4r)+1)r^{d-1}.
 \end{multline*}
  
 For the boundary cubes, we use the  inequality \eqref{eq:dc}. Thus for every $Q_j$, we obtain
 \[\h^{d-1}(Z(h)\cap Q_j)\le C(N_h(x,4r)+1)s(Q_j)^{d-1}.\]
 Summing these inequalities over all cubes, we obtain the required estimate. It remains to prove \eqref{eq:dc}. 

\subsection{Proof of \eqref{eq:dc}}\label{ss:ptm2}
We fix a compact set $K\subset\Omega$ and prove that 
\begin{equation}\label{eq:dcK}
\h^{d-1}(Z(h)\cap Q\cap K)\le C_0N_h^{**}(Q)s(Q)^{d-1},
\end{equation}
where $Q\subset 2B$ is a cube as in the standard construction and $C_0$ is independent of $K$. Then \eqref{eq:dc} follows.

First, note that \eqref{eq:dcK} holds for all cubes $Q$ small enough, since $Q\cap K=\varnothing$ for such cubes. We prove \eqref{eq:dcK} by induction on the size of $Q$, going from small cubes to larger ones. 
Assume that it holds for cubes with $s(Q)<s$, we want to prove it for cubes with $s(Q)<2^ks$, where $k$ is as in \eqref{eq:stepM}.

We consider the standard construction $(Q, \mathcal{B}_k(Q), \mathcal{I}_k(Q))$. 
Each inner cube $q\in\mathcal{I}_k(Q)$ can be covered  by balls $b$  centered in $q$ with radii  $s(q)/100$ and such that $8\overline{b}\subset\Omega$, so that the number of balls is bounded by a dimensional constant. For each such ball $b=B(y, s(q)/100)$, applying Lemma~\ref{l:mon}, we get $N_h(y,s(q)/25)\le C(k)N^*_h(Q)$ when $\tau$ is small enough. Then by  Lemma \ref{l:in},  we have 
\begin{equation}\label{eq:innerc}
\sum_{q\in\mathcal{I}_k(Q)}\h^{d-1}(Z(h)\cap q)\le C (N_h^*(Q)+1)s(Q)^{d-1}\le C_1N_h^{**}(Q)s(Q)^{d-1},
\end{equation}
where $C$ and $C_1$ depend on $k$.

For all other boundary cubes $q$, we have $N^{**}_h(q)\le (1+\varepsilon)^kN^{**}_h(Q)$. Also \eqref{eq:stepM} implies that there is a cube $q_0\in\mathcal{B}_k(Q)$ such that either $N^{**}_h(q_0)\le N_h^{**}(Q)/2$ or $Z(h)\cap q_0=\varnothing$. 
We apply the induction assumption to each boundary cube
and obtain
\begin{multline*}
\h^{d-1}(Z(h)\cap K\cap(\cup_{q\in\mathcal{B}_k(Q)}q))\\
\le \sum_{q\in\mathcal{B}_k(Q),q\neq q_0}\h^{d-1}(Z(h)\cap K\cap  q)+\h^{d-1}(Z(h)\cap K\cap q_0)\\
\le \sum_{q\in\mathcal{B}_k(Q),q\neq q_0}C_0 N_h^{**}(q)s(q)^{d-1}+\frac {C_0}{2} N_h^{**}(Q)s(q_0)^{d-1}\\
\le
\left(\frac{2^{k(d-1)}-1}{2^{k(d-1)}}(1+\varepsilon)^k+\frac{1}{2}\cdot\frac{1}{2^{k(d-1)}}\right) C_0N_h^{**}(Q)s(Q)^{d-1}.\end{multline*}
Finally, we choose $\varepsilon$ small and $C_0$ large enough so that
\[C_1+\left(\frac{2^{k(d-1)}-1}{2^{k(d-1)}}(1+\varepsilon)^k+\frac{1}{2}\cdot\frac{1}{2^{k(d-1)}}\right) C_0<C_0.\]
Note that $C_0$ does not depend on $K$.
Then, assuming that $\tau$ is small enough and taking into account \eqref{eq:innerc}, we obtain
\[
\h^{d-1}(Z(h)\cap K\cap Q)\le 
C_0N_h^{**}(Q)s(Q)^{d-1}.
\]
This concludes the induction step and the proof of \eqref{eq:dc}.


\section{Dirichlet Laplace eigenfunctions}\label{s:eigen}

\subsection{Harmonic extension and an estimate of the doubling index}\label{ss:harm}
 Let $\Omega_0\subset \R^n$ be a bounded Lipschitz domain. Let $u_\lambda$ be an eigenfunction of the Dirichlet Laplace operator, $u_\lambda\in W_0^{1,2}(\Omega_0),$ $\Delta u_\lambda+\lambda u_\lambda=0$. Then $u_\lambda\in C(\overline{\Omega}_0)$.  This fact is well-known, we provide a proof in the Appendix below, see Section \ref{s:Hol}.
 
 We consider the harmonic extension of $u_\lambda$ to the domain $\Omega=\Omega_0\times\R\subset \R^{n+1}$, given by
\[h(x,t)=u_\lambda(x)e^{\sqrt{\lambda}t}.\]
Then $h\in C(\overline{\Omega})$ and, clearly, $Z(h)=Z(u_\lambda)\times\R$, where the zero sets are sets inside the domains $\Omega$ and $\Omega_0$ respectively. We need the following estimate of the doubling index of this harmonic extension.

\begin{lemma}
\label{l:DF}
Let $\Omega_0$ be a bounded domain in $\R^n$ with a sufficiently small local Lipschitz constant $\tau$.  Let $r_0>0$ be such that $\partial\Omega_0\cap B(x,r_0)\in Lip(\tau)$ for any $x\in\partial\Omega_0$.  Then for any $r\in (0, r_0/16)$, there exists $C=C(r,\Omega_0)>0$   such that for any Dirichlet Laplace eigenfunction $u_\lambda$, the corresponding harmonic extension $h(x,t)=u_\lambda(x)e^{\sqrt{\lambda}t}$ satisfies   $N_h(y,r)\le C\sqrt{\lambda}$ when $y=(x,t)\in\overline{\Omega}$.
\end{lemma}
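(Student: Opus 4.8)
The plan is to reduce the estimate on $N_h(y,r)$ for the harmonic extension $h(x,t)=u_\lambda(x)e^{\sqrt{\lambda}t}$ to the classical doubling estimate of Donnelly and Fefferman for eigenfunctions, which gives $\int_{B(x,2\rho)\cap\Omega_0}u_\lambda^2 \le e^{C\sqrt{\lambda}}\int_{B(x,\rho)\cap\Omega_0}u_\lambda^2$ on a fixed scale, together with the almost-monotonicity of the doubling index proven in Lemma \ref{l:mon} and Corollary \ref{cor:x12}. The first step is to observe that, because of the product structure of $h$, the quantity $H_h(y,\rho)=\int_{B(y,\rho)\cap\Omega}h^2$ factors (up to bounded constants coming from the $t$-integration over a fixed-length interval comparable to $\rho$) as $e^{2\sqrt{\lambda}t}$ times $\int_{B(x,\rho)\cap\Omega_0}u_\lambda^2$ integrated against $e^{2\sqrt{\lambda}s}\,ds$ over an interval of length $\sim\rho$; hence $N_h(y,\rho)$ differs from $\log\frac{\int_{B(x,2\rho)\cap\Omega_0}u_\lambda^2}{\int_{B(x,\rho)\cap\Omega_0}u_\lambda^2}$ by at most $C(1+\sqrt{\lambda}\rho)$. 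Since $\rho=r<r_0/16$ is fixed, this error is $\le C(r)\sqrt{\lambda}$ for large $\lambda$ and $\le C(r)$ for bounded $\lambda$, so it suffices to bound the $n$-dimensional doubling quantity of $u_\lambda$ on the fixed scale $r$.

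Next I would distinguish the interior and boundary cases for the point $x\in\overline{\Omega}_0$. If $\dist(x,\partial\Omega_0)\ge c(r_0)$, the ball $B(x,2r)$ (after possibly shrinking $r$, or covering by finitely many balls) lies well inside $\Omega_0$, and the interior monotonicity formula \eqref{eq:monh} together with the Donnelly--Fefferman interior doubling estimate bounds the doubling index by $C\sqrt{\lambda}$. If $x$ is near $\partial\Omega_0$, I would use a finite cover of $\partial\Omega_0$ by balls $B(x_i,r_0)$ with $\partial\Omega_0\cap B(x_i,r_0)\in Lip(\tau)$; for $x$ in the corresponding piece one invokes Lemma \ref{l:mon} (or Corollary \ref{cor:x12}) to pass from the doubling index at the fixed scale $r$ to the doubling index at a scale comparable to $r_0$, paying only a dimensional multiplicative constant (since $\tau$ is chosen small and the ratio of scales is bounded by $r_0/r$, fixed). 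At the largest scale $\sim r_0$ one is comparing $\int_{B\cap\Omega_0}u_\lambda^2$ over balls of fixed radius against $\int_{\Omega_0}u_\lambda^2$, and here the boundary doubling estimate of Donnelly--Fefferman (proved via the reflection/Carleman estimate argument in \cite{DF}, using that $u_\lambda$ vanishes on $\partial\Omega_0$) gives the bound $e^{C\sqrt{\lambda}}$ for the ratio, hence $N_h(y,r)\le C\sqrt{\lambda}$.

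To organize this cleanly I would first record, as an auxiliary step, that for the eigenfunction $u_\lambda$ on a Lipschitz domain with small Lipschitz constant one has a global doubling-type inequality $\int_{B(x,\rho)\cap\Omega_0}u_\lambda^2 \ge e^{-C(\rho)\sqrt{\lambda}}\int_{\Omega_0}u_\lambda^2$ for every $x\in\overline{\Omega}_0$ and fixed $\rho$ — this is the content of the quantitative propagation of smallness / three-ball arguments of Donnelly--Fefferman, and it subsumes both the interior and boundary cases after a finite-chain-of-balls argument. Combined with the trivial upper bound $\int_{B(x,2r)\cap\Omega_0}u_\lambda^2\le\int_{\Omega_0}u_\lambda^2$, this directly yields $\log\frac{\int_{B(x,2r)\cap\Omega_0}u_\lambda^2}{\int_{B(x,r)\cap\Omega_0}u_\lambda^2}\le C(r)\sqrt{\lambda}$, and then the factorization estimate from the first paragraph finishes the proof. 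The main obstacle, and the step requiring the most care, is the boundary doubling estimate for $u_\lambda$ near the low-regularity part of $\partial\Omega_0$: the original Donnelly--Fefferman argument is phrased for smooth (indeed analytic) boundaries, so one must either check that their Carleman-estimate proof survives for Lipschitz domains with sufficiently small Lipschitz constant, or — more in the spirit of this paper — bypass it by using the already-established almost-monotonicity (Lemma \ref{l:mon}) to transfer the boundary doubling bound to the scale $\sim r_0$ where $\partial\Omega_0$ looks like a small-Lipschitz graph, and there reflect across the (nearly flat) boundary to reduce to an interior doubling statement in a slightly larger Lipschitz domain. I expect the bookkeeping of constants (their dependence on $r$, on $r_0$, on $\diam\Omega_0$ and on $n$ but not on $\lambda$) to be the only other delicate point.
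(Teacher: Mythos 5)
Your high-level intuition — that one should prove a chain-of-balls propagation of smallness — matches the paper, but the central reduction you propose has a genuine gap, which you yourself flag and do not resolve. You want to reduce the doubling index of the harmonic extension $h$ back to a doubling quantity for $u_\lambda$ and then invoke ``the classical Donnelly--Fefferman doubling estimate'' $\int_{B(x,2\rho)\cap\Omega_0}u_\lambda^2 \le e^{C\sqrt\lambda}\int_{B(x,\rho)\cap\Omega_0}u_\lambda^2$. For points near $\partial\Omega_0$ this estimate is not available here: the Donnelly--Fefferman boundary argument requires analytic (or at least smooth) boundary, and the whole point of this paper is to handle Lipschitz boundaries with small constant. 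You note this is ``the main obstacle,'' but neither of your two proposed workarounds closes the gap. Checking that the Carleman-estimate proof of \cite{DF1} survives on Lipschitz domains would be a substantial new theorem, not a step; and the ``reflect across the (nearly flat) boundary'' idea fails because Schwarz reflection does not preserve harmonicity across a merely Lipschitz graph, so there is no ``slightly larger Lipschitz domain'' containing an extended harmonic function to which interior doubling would apply. The almost-monotonicity (Lemma \ref{l:mon}) also cannot supply the missing bound: it controls how $N_h(\cdot,\rho)$ changes between two comparable scales, but gives no a priori bound on $N_h$ at any single scale.

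The paper sidesteps all of this by never returning to $u_\lambda$. It stays with the harmonic extension $h$ on $\Omega=\Omega_0\times\R$ throughout, where Lemma \ref{l:three1} (the boundary three-ball inequality, proved from Lemma \ref{l:mon} via the Kukavica--Nystr\"om star-shapedness argument, not reflection) already provides exactly the propagation tool you need near $\partial\Omega$, with no appeal to any eigenfunction doubling estimate. Concretely: normalize $\max_{\Omega_0}|u_\lambda|=1$, build a finite chain of balls of radius $r/2$ (with a uniformly bounded number of links, via a fixed $r/8$-net of $\overline\Omega_0$) joining the given point to the maximum point, and at each link apply either \eqref{eq:threeh} (interior) or Lemma \ref{l:three1} (near boundary). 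The crucial quantitative input — which your factorization step obscures rather than uses — is that $\sup_{4\widetilde B_j\cap\Omega}|h|\le e^{2\sqrt\lambda r}$ by the explicit $e^{\sqrt\lambda t}$ growth. Each three-ball step then loses only a factor $e^{C\sqrt\lambda r}$, so after a $\lambda$-independent number of steps one gets $\sup_{\widetilde B_0\cap\Omega}|h|\ge c_1e^{-C_2\sqrt\lambda}$, and $N_h(y,r)\le C\sqrt\lambda$ follows by \eqref{eq:subh}. Your interior discussion and the observation that $N_h(y,r)$ depends only on the $x$-coordinate of $y$ are fine, but the boundary case, as written, rests on an unavailable estimate.
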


This result is similar to the results of Donnelly and Fefferman, \cite{DF, DF1}, who  considered eigenfunctions on compact manifolds and on domains with $C^\infty$-smooth boundaries and obtained the above estimate of the doubling index for eigenfunctions. However, in contrast to the previous results,  the doubling index is allowed to blow up as $r\to 0$ in the above lemma.  The statement of the lemma follows by  application of  Lemma \ref{l:three1} and inequality \eqref{eq:threeh} to a chain of balls, the argument is similar to the one in  \cite[Section 2.4]{LM}. For the convenience of the reader, we provide the details below.

\begin{proof} 
 We consider  any  $y=(x,t)\in\overline\Omega$ and let $y_0=(x,0)$. Since $h(x,t+s)=e^{\sqrt{\lambda}t}h(x,s)$, we have $N_h(y,r)=N_h(y_0,r)$. So it is enough to estimate the doubling index of $h$ in the balls centered on $\overline{\Omega}_0\times\{0\}$. 
 
 We fix $r\in(0, r_0/16)$ and let $\mathcal{S}\in\overline{\Omega}_0$ be a finite $r/8$-net for $\overline{\Omega}_0$, i.e., $\overline{\Omega}_0\subset\bigcup_{p\in \mathcal{S}} B(p,r/8)$.
 Let $B_*=B(y,r)$ be  a ball of radius $r$ centered at $y=(y_*,0)\in\overline{\Omega}_0\times\{0\}$.
 Assume that $\max_{\Omega_0}|u_\lambda|=|u_\lambda(x_0)|=1$. We consider a path $\gamma:[0,1]\to\overline{\Omega}_0$ from $y_*$ to $x_0$ such that $\gamma((0,1))\subset\Omega_0$. Now we construct a chain of balls $\{B_j\}_{j=0}^J$. Let $B_0=B(y_*,r/2)$. Assuming that $B_j=B(y_j,r/2)$ is constructed, we define 
 \[s_j=\sup\{s\in[0,1]: |\gamma(s)-y_j|\le r/8\}.\]
 If $s_j<1$, we have $|\gamma(s_j)-y_j|=r/8$ and we choose $y_{j+1}\in \mathcal{S}$ such that $|y_{j+1}-\gamma(s_j)|<r/8$. If $s_j=1$, we define $y_{j+1}=y_J=x_0$ and stop the chain. We have $|y_j-y_{j+1}|< r/4$ and define $B_{j+1}=B(y_{j+1}, r/2)$. We note that $s_{j+1}>s_j$ when $0\le j<J-1$ and that $y_{j+1}\in\mathcal{S}\setminus\{y_0,...,y_j\}$ when $0\le j< J-1$. We also have $B_{j+1}\subset \frac{3}{2}B_j$. 
  The resulting chain is finite, moreover, the number of balls in the chain is bounded by the number of elements in $\mathcal{S}$ plus two.

Let now $\widetilde{B}_j=B((y_j,0),r/2)$ be the corresponding ball in $\R^{n+1}$. 
Then $\sup_{4\widetilde{B}_j\cap\Omega}|h|\le e^{2\sqrt{\lambda} r}$. If $4\tilde{B}_j\subset\Omega$, then \eqref{eq:threeh} gives
\begin{multline*}\sup_{\frac{3}{2}\widetilde{B}_j}|h|\le 2^{n+1}(\sup_{\widetilde{B}_j}|h|)^{1/2}(\sup_{4\widetilde{B}_j}|h|)^{1/2}\\
\le  3^{n+1}(\sup_{\widetilde{B}_j}|h|)^{1/3}(\sup_{4\widetilde{B}_j}|h|)^{2/3}\le 3^{n+1}e^{4\sqrt{\lambda}r/3}(\sup_{\tilde{B}_j}|h|)^{1/3}.\end{multline*}
Otherwise we have $\dist(y_j,\partial\Omega_0)<2r<r_0/8$. In this case, there is a ball $\tilde{B}$ of radius $r_0$ centered on $\partial\Omega_0\times\{0\}$  such that $(y_j,0)\in \overline{\Omega}\cap \frac{1}{4}\tilde{B}$ and $16 \tilde{B}_j\subset \tilde{B}$. Then  Lemma \ref{l:three1}, applied to the ball $\tilde{B}_j$, implies that 
\[\sup_{\frac32 \tilde{B}_j\cap\Omega}|h|\le 3^{n+1}(\sup_{\tilde{B}_j\cap\Omega}|h|)^{1/3}(\sup_{4\tilde{B}_j\cap\Omega}|h|)^{2/3}\le 3^{n+1}e^{4\sqrt{\lambda}r/3}(\sup_{\tilde{B}_j\cap\Omega}|h|)^{1/3}.\]
Therefore, we obtain for each $j$, 
\[\sup_{\widetilde{B}_{j}\cap\Omega}|h|\ge 3^{-3(n+1)}(\sup_{\frac32\widetilde{B}_{j}\cap\Omega}|h|)^3 e^{-4\sqrt{\lambda} r}\ge 3^{-3(n+1)}(\sup_{\widetilde{B}_{j+1}\cap\Omega}|h|)^{3}e^{-4\sqrt{\lambda} r}.\]
We also have $\sup_{\widetilde{B}_J\cap\Omega}|h|=e^{\sqrt{\lambda} r/2}$. Combining the above inequalities, we get
\[\sup_{\widetilde{B}_0\cap\Omega}|h|\ge c_1e^{-C_2\sqrt{\lambda}},\]
 where $c_1$ and $C_2$ depend on $r$ and $J$ but not on $\lambda$. We can choose the $r/8$-net $\mathcal{S}$ so that the number of points in $\mathcal{S}$ depends only on $\diam(\Omega_0)$, $r$, and the dimension. Thus we conclude that the constants in the last inequality depend only on $r$, the diameter of $\Omega_0$, and $n$. 
 
Finally, applying \eqref{eq:subh}, we obtain
\[N_h(y,r)=\log\frac{\int_{4\widetilde{B}_0\cap\Omega}h^2}{\int_{2\widetilde{B}_0\cap\Omega}h^2}\le \log\frac{\sup_{4\widetilde{B}_0\cap\Omega}|h|^2}{\sup_{\widetilde{B}_0\cap\Omega}|h|^2}+C\le (4r+2C_2)\sqrt{\lambda}+C\le C\sqrt{\lambda},\]
where $C=C(\Omega_0,r)$. We remark that $\lambda\ge\lambda_1(\Omega_0)>0$, where $\lambda_1(\Omega_0)$ is the first Dirichlet Laplace eigenvalue in $\Omega_0$. Moreover, if $B^*$ is a ball of radius $\diam(\Omega_0)$ then $\lambda_1(\Omega_0)\ge \lambda_1(B^*)$. Thus the constant $C$ in the conclusion of this Lemma depends only on $r$, $\diam(\Omega_0)$, and $n$.
\end{proof}

\subsection{Proof of Theorem $\ref{th:main}'$}\label{ss:thm}
Let $\Omega_0\subset\R^n$ be a bounded  domain with a sufficiently small local Lipschitz constant $\tau$.  Let also $r_0>0$ be such that $\partial\Omega_0\cap B(x,r_0)\in Lip(\tau)$ for every $x\in\partial\Omega_0$. We consider the domain $\Omega=\Omega_0\times\R\subset \R^{n+1}$ and let $\Omega_1=\Omega_0\times[-1,1]$. For each $x\in\partial\Omega\times[-1,1]$ we consider a ball centered at $x$ of radius $2^{-9}r_0$. These balls cover the closed $2^{-10}r_0$-neighborhood of the set  $\partial \Omega\times[-1,1]$. We can choose a disjoint collection of these balls $b_j$ such that the balls $B_j=4b_j$ cover the same closed neighborhood of  $\partial \Omega\times[-1,1]$.  Then for each point  of $\Omega_1\setminus\cup_j B_j$, we choose a ball $b$ centered at the point of radius $2^{-15}r_0$, so that $32b\subset \Omega$. Once again, we find a finite sub-collection of  disjoint balls $b'_k$ such that   $B_k'=4b_k'$ cover $\Omega_1\setminus\cup_j B_j$. We note that $8B_k'\subset \Omega$.  We fix this covering  of $\Omega_1$ and remark that radii of all balls depend only on $r_0$ and the number of balls depends on $r_0$, the diameter of $\Omega_0$, and $n$.

Let now $u_\lambda$ be a Dirichlet Laplace  eigenfunction in $\Omega_0$: $\Delta u_\lambda+\lambda u_\lambda=0$ in $\Omega_0$ and $u_\lambda=0$ on $\partial\Omega_0$. We consider its harmonic extension $h(x,t)=e^{\sqrt{\lambda}t}u_\lambda(x)$. Then $h\in C(\overline{\Omega})$ is  non-zero,  and $h=0$ on $\partial\Omega$. Let $C_0=\max\{C(2^{-5}r_0,\Omega_0), C(2^{-11}r_0,\Omega_0)\}$, where $C(r,\Omega_0)$ is as in  Lemma \ref{l:DF}. Then for $B(x,r)\in\{B_j\}\cup\{B_k'\}$, we have $N_h(x,4r)\le C_0\sqrt{\lambda}$.
Finally, we apply Theorem \ref{th:m2} to each of the balls $B_j$ and Lemma \ref{l:in} to each of the inner balls $B_k'$. We conclude that
\begin{multline*}
\h^{n}(Z(h)\cap\Omega_1)\le\sum_j\h^n(Z(h)\cap B_j)+\sum_k\h^n(Z(h)\cap B'_k)\\
\le  C(C_0\sqrt{\lambda}+1)\left(\sum_j r(B_j)^n+\sum_kr(B'_k)^n\right)\le C_1\sqrt{\lambda}.
\end{multline*}
 Then $\h^{n-1}(Z(u_\lambda)\cap\Omega_0)\le C_1\sqrt{\lambda}$, which finishes the proof of Theorem $\ref{th:main}'$.

\appendix

\section*{Appendix: Proofs of some auxiliary results}
\renewcommand{\thesubsection} {A.\arabic{subsection}}

\subsection{Estimates for the zero set of harmonic functions inside the domain}\label{ss:DF} We outline some steps of the proof of Lemma \ref{l:in}. First the harmonic function $h$ is extended to a holomorphic function $H$ on a domain in \ $\C^{d}$,
see Lemma 7.2 in \cite{DF}. Our situation is particularly simple, since we only consider the standard Laplace operator on Euclidean domains. For this case  the  holomorphic extension is given by the complexification of the Poisson kernel.
The Poisson kernel in a ball $B(x,r)\subset\R^d$ is given by
\[P_r(z,y)=c_d\frac{r^2-|z-x|^2}{r|z-y|^{d}},\quad |z-x|<r,\ |y-x|=r.\]
For any $y\in\partial B(x,r)$,
the function $z=(z_1,...,z_d)\mapsto \sum_j(z_j-y_j)^2$ maps the complex ball $B_\C(x,r/\sqrt{2})\subset \C^d$  of radius $r/\sqrt{2}$ centered at $x\in\R^d\subset\C^d$ to the half-plane $\Re \xi>0$. Then the Poisson kernel has the holomorphic extension to $B_\C(x,r/\sqrt{2})$. Moreover, for any $a<1/\sqrt{2}$, 
\[|P_r(z,y)|\le C(a)r^{-(d-1)},\quad z\in B_\C(x,r_0),\  r_0\le ar.\]

We consider a  ball $B=B(x,8r)$ such that $\overline{B}\subset\Omega$. Then there exists a holomorphic extension $H(z)$ of $h$ defined on a ball $B_{\C}(x, 3r)$,
\[H(z)=\int_{\partial B(x,6r)}P_{6r}(z,y)h(y)d\sigma(y),\]
such that $|H(z)|\le C\max_{\overline{B}(x, 6r)}|h|$. Then
\[\sup_{B_{\C}(x,3r)}|H(z)|\le C'r^{-d/2}\left(\int_{B(x,8r)}h^2\right)^{1/2}.\]  

Now we can cover the set $Z(h)\cap B(x,r)$ by a finite number of balls with centers in $B(x,r)$ of radii $r/20$ so that the number of the balls is bounded by a constant depending on the dimension only. Let $B(y,r/20)$ be one of such balls. By a version of Corollary \ref{cor:x12} for the doubling index inside the domain, we have
$N_h(y, 2r)\le 3N$, where $N=N_h(x,4r),$ and, therefore, $N_h(y, r_1)\le 3N$ when $r_1<2r$.
Thus 
\[\sup_{B(y,\frac{r}{16})}h^2\ge cr^{-d}\int_{B(y,\frac{r}{16})}h^2 \ge cr^{-d}e^{-15N}\int_{B(y,2r)}h^2\ge cr^{-d}e^{-15N}\int_{B(x,r)}h^2.\] 
Therefore,
\[\sup_{B(y,\frac{r}{10})}|H|\ge \sup_{B(y,\frac{r}{16} )}|h|\ge cr^{-d/2}e^{-7.5N}\left(\int_{B(x,r)}h^2\right)^{1/2}.\]
Combining the inequalities above, we obtain
\[\frac{\sup_{B_{\C}(y,2r)}|H|}{\sup_{B(y,r/10)}|H|}\le \frac{\sup_{B_{\C}(x,3r)}|H|}{\sup_{B(y,r/10)}|H|}\le Ce^{7.5N}\left(\frac{\int_{B(x,8r)}h^2}{\int_{B(x,r)}h^2}\right)^{1/2}\le Ce^{9N}.\]
Finally an estimate for the size of the zero set of a holomorphic function, Proposition 6.7 in \cite{DF}, implies that
\[\h^{d-1}(Z(h)\cap B(y, r/20))\le C(N_h(x,4r)+1).\]
We sum these inequalities over all balls $B(y, r/20)$ to obtain the required estimate for $\h^{d-1}(Z(h)\cap B(x,r))$.

\subsection{Continuity of eigenfunctions in Lipschitz domains}\label{s:Hol}
  First we prove the following regularity result.
 \begin{lemma}\label{l:brn}
 Let $\Omega$ be a domain in $\R^d$  and let $h$ be a harmonic function in $\Omega$. Suppose that $B$ is a ball  centered on $\partial\Omega$ and that there exists a sequence of functions $\{h_n\}$, $h_n\in C^\infty_0(\R^d)$ with the support of $h_n$ contained in $\Omega$, such that $h_n\to h$ and $\nabla h_n\to \nabla h$ in $L^2(B\cap\Omega)$. Assume also that $\partial\Omega\cap B\in Lip(\tau)$ and define $h=0$ on $\partial\Omega\cap B$. Then $h\in C(\overline{\Omega}\cap\frac12B)$.
 \end{lemma}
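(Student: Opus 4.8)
The plan is to reduce the claim to a boundary estimate, and then to combine a De~Giorgi--Nash--Moser local boundedness bound with a barrier at the Lipschitz boundary. Write $\tilde h$ for the extension of $h$ by zero to $B\setminus\Omega$. The hypothesis on $\{h_n\}$ says precisely that $\tilde h\in W^{1,2}(B)$ and that $\tilde h$ has vanishing trace on $\partial\Omega\cap B$, in the sense that for every $\eta\in C_0^\infty(B)$ and every $k\ge 0$ the functions $\eta^2(\tilde h-k)^+$ and $\eta^2(-\tilde h-k)^+$ belong to the $W^{1,2}$-closure of $C_0^\infty(\Omega\cap B)$. Since $h$ is harmonic, hence smooth, in the open set $\Omega$, continuity of $h$ on $\overline\Omega\cap\frac12 B$ is automatic at points of $\Omega$, so it remains only to prove that $h(x)\to 0$ as $\Omega\ni x\to x_0$ for each $x_0\in\partial\Omega\cap\frac12 B$.

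First I would prove \emph{local boundedness}: $h$ is bounded on $\Omega\cap\frac34 B$. Passing to the limit from test functions in $C_0^\infty(\Omega\cap B)$ and using that $h$ is harmonic in $\Omega$ one obtains $\int_{\Omega\cap B}\nabla h\cdot\nabla\varphi=0$ for every $\varphi$ in the $W^{1,2}$-closure of $C_0^\infty(\Omega\cap B)$. Testing with $\varphi=\eta^2(\tilde h-k)^+$ gives the Caccioppoli inequality
\[
\int \eta^2\,\big|\nabla(\tilde h-k)^+\big|^2\ \le\ 4\int |\nabla\eta|^2\,\big((\tilde h-k)^+\big)^2,\qquad k\ge 0,\ \eta\in C_0^\infty(B),
\]
and the same with $\tilde h$ replaced by $-\tilde h$. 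Hence $\tilde h\in W^{1,2}(B)$ and all its upper and lower truncations satisfy Caccioppoli's inequality on balls inside $B$, so the De~Giorgi local boundedness estimate for subsolutions yields $\tilde h\in L^\infty_{\mathrm{loc}}(B)$; in particular $|h|\le M<\infty$ on $\Omega\cap\frac34 B$.

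Next comes the \emph{barrier}. Fix $x_0\in\partial\Omega\cap\frac12 B$ and choose coordinates as in Remark~\ref{r:1}, so that $\Omega\cap B=\{(y',y'')\in B:\,y''>f(y')\}$ with $f$ being $\tau$-Lipschitz and $f(x_0')=x_0''$. The solid cone $\{y:\,y''-x_0''\le-\tau|y'-x_0'|\}$ meets $B$, away from its vertex, only in $\R^d\setminus\Omega$, so $\Omega$ satisfies an exterior cone condition at $x_0$ with aperture depending only on $\tau$. The classical exterior-cone barrier construction then produces $\rho_0\in(0,R/4)$ (with $R$ the radius of $B$) and a function $w$ that is harmonic, hence a supersolution, and lies in $W^{1,2}\cap C$ on $D:=\Omega\cap B(x_0,\rho_0)$, with $w\ge 0$, $w(x_0)=0$, $w(x)\to 0$ as $x\to x_0$, and $w\ge c_0>0$ on $\overline\Omega\cap\big(\overline{B(x_0,\rho_0)}\setminus B(x_0,\rho_0/2)\big)$. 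Fixing $A>M/c_0$, the function $h-Aw$ is a weak subsolution on $D$ whose positive part has zero trace on all of $\partial D$: on $\partial\Omega\cap B(x_0,\rho_0)$ because $h$ has zero trace and $w\ge 0$, and on $\Omega\cap\partial B(x_0,\rho_0)$ because there $h\le M<Ac_0\le Aw$. By the maximum principle for $W^{1,2}$ subsolutions, $h\le Aw$ on $D$; applying this also to $-h$ gives $|h|\le Aw$ on $D$, and since $w$ is continuous with $w(x_0)=0$ we conclude $h(x)\to 0=h(x_0)$ as $\Omega\ni x\to x_0$.

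The step that needs the most care is the use of the hypothesis on $\{h_n\}$ in the first part: one must check that the weak formulation of the harmonicity of $h$ may be tested against $\eta^2(\tilde h-k)^+$, a function that is only in $W^{1,2}$ and not compactly supported in $\Omega$; this is exactly where the zero-trace information carried by the convergences $h_n\to h$ and $\nabla h_n\to\nabla h$ in $L^2(B\cap\Omega)$ enters (one approximates $\eta^2(\tilde h-k)^+$ in $W^{1,2}$ by the functions $\eta^2(h_n-k)^+\in W^{1,2}_0(\Omega\cap B)$). As an alternative to the explicit barrier in the last two steps, one may invoke the De~Giorgi--Nash--Moser boundary oscillation lemma, which applies directly under the measure density bound $|B(x_0,\rho)\setminus\Omega|\ge c\rho^d$ enjoyed by Lipschitz domains and gives Hölder continuity of $h$ up to $\partial\Omega\cap\frac12 B$ in one stroke.
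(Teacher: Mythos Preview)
Your argument is correct, and it reaches the same conclusion (even the H\"older continuity you mention at the end) by a genuinely different route from the paper's own proof.

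The paper avoids De~Giorgi--Nash--Moser theory and barriers altogether by working with the \emph{square} of $h$: it defines $v=h^2$ in $\Omega\cap B$, $v=0$ in $B\setminus\Omega$, and uses the approximants $h_n$ to show, via two integrations by parts, that $\int_B v\,\Delta\varphi=2\int_{B\cap\Omega}|\nabla h|^2\varphi\ge 0$ for nonnegative $\varphi\in C_0^\infty(B)$. Hence $v$ is weakly subharmonic in the full ball $B$, and after mollification it satisfies the sub--mean-value inequality at every Lebesgue point. This immediately gives local boundedness of $h$ on $\frac23 B\cap\Omega$. For the boundary limit, the paper uses the exterior cone at $x_0\in\partial\Omega\cap\frac12 B$ only through its \emph{measure density}: since a fixed fraction $\alpha$ of each small ball $B(x_0,\rho)$ lies outside $\Omega$ (where $v=0$), the sub--mean-value inequality yields $m_k^2\le(1-\alpha)m_{k-1}^2$ for $m_k=\sup_{B(x_0,3^{-k}r)\cap\Omega}|h|$, giving geometric decay and hence $h\to 0$ at $x_0$.

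Your approach and the paper's are thus two sides of the same coin. You linearize (Caccioppoli for $(\tilde h-k)^+$, then De~Giorgi iteration and a cone barrier with the weak maximum principle), while the paper squares (subharmonicity of $h^2$, then the sub--mean-value inequality iterated against the exterior-cone measure density). The paper's argument is more elementary and fully self-contained --- no black boxes from elliptic regularity theory --- and makes the role of the hypothesis on $\{h_n\}$ very transparent: it is used exactly once, to justify moving derivatives onto $h$ in the identity $\int_B v\,\Delta\varphi=2\int|\nabla h|^2\varphi$. Your approach, on the other hand, is more robust: it would carry over verbatim to solutions of general divergence-form elliptic equations with bounded measurable coefficients, where no sub--mean-value inequality for $h^2$ is available.
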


\begin{proof} 
We define the function
\[v=\begin{cases} h^2\ {\text{in}}\ \Omega\cap B,\\ 0\ {\text{in}}\ B\setminus \Omega.\end{cases}\]
Then $v\in L^1(B)$.   Let $\varphi\in C_0^\infty(B)$. We have
\begin{multline}\label{eq:app1}
\int_B v\Delta \varphi=\lim_{n\to\infty} \int_B h_n^2\Delta \varphi\\=
-2\lim_{n\to\infty} \int_B h_n\nabla h_n\cdot\nabla \varphi=-2\int_{B\cap\Omega} h\nabla h\cdot\nabla\varphi.
\end{multline}
On the other hand, since $h$ is harmonic in $\Omega$ , we obtain
\[0=\int_{\Omega} \nabla h\cdot \nabla(h_n\varphi)=\int_\Omega h_n\nabla h\cdot\nabla\varphi+\int_\Omega \varphi\nabla h\cdot\nabla h_n.\]
Taking the limit as $n\to\infty$, we get
\[ \int_\Omega h\nabla h\cdot\nabla\varphi=-\int_{\Omega} \varphi |\nabla h|^2.\]
Combining the last  identity and \eqref{eq:app1} gives
\[
\int_B v\Delta\varphi=2\int_{B\cap\Omega}|\nabla h|^2\varphi.\]
In particular, $v$ is subharmonic in $B$ in the weak sense: If $\varphi\ge 0$, $\varphi\in C_0^\infty(B)$, then $\int_B v\Delta\varphi\ge 0$.  If $\alpha$ is a standard mollifier, $\alpha_\delta(x)=\delta^{-d}\alpha(\delta^{-1}x)$, and $v_\varepsilon=v*\alpha_{\varepsilon r}$, where $r$ is the radius of $B$. Then $v_\varepsilon$ is subharmonic in $(1-\varepsilon)B$ and $v_\varepsilon\to v$ in $L^1(B)$ and almost everywhere. In particular, $v$ satisfies the mean value inequality at each its Lebesgue point. Clearly any $y\in\Omega\cap B$ is a Lebesgue point of $v$ as $v=h^2$ in $\Omega\cap B$ and $h\in C(\Omega)$. So for any $y\in\Omega\cap B$ and any ball $B_1\subset B$ centered at $y$ we have
\[v(y)\le \frac{1}{|B_1|}\int_{B_1}v.\] 
In particular, 
\[\sup_{\frac23B\cap\Omega} h^2\le \frac{3^d}{|B|}\int_{B\cap\Omega}h^2<\infty.\]

 Suppose that
 $x_1\in\partial\Omega\cap \frac12 B$. There exists a cone $\mathcal{C}$ with the vertex at $x_1$ such that ${\mathcal C}\cap(\Omega\cap B)=\varnothing$ and the aperture of $\mathcal{C}$ does not depend on $x_1$ (it depends on $\tau$ only). We use the following simple fact. If $y_1\in \R^d$ and $\rho>2\,\dist(x_1,y_1)$, then 
 \[| B(y_1, \rho)\cap\mathcal{C}|\ge \alpha| B(y_1,\rho)|,\] for some $\alpha=\alpha(\tau)\in(0,1)$.
 
 Let  $m_k=\sup_{B(x_1, 3^{-k}r)\cap\Omega}|h|$ for $k\ge 2$. We know that $m_k<\infty$. Let $y\in B(x_1, 3^{-k}r)\cap\Omega$, $k\ge 3$. By the mean value theorem applied to $v$, we obtain
 \[v(y)\le \frac{1}{|B(y, 2\cdot 3^{-k}r)|}\int_{B(y, 2\cdot 3^{-k}r)} v\le (1-\alpha)m_{k-1}^2.\]
 Thus $\sup_{B(x_1, 3^{-k}r)\cap\Omega}|h|\le (1-\alpha)^{(k-2)/2}\sup_{\frac{2}{3}B\cap\Omega}|h|$. We conclude that
\[\lim_{y\to x_1, y\in\Omega} h(y)=0.\]
 \end{proof}
 We remark that the argument above implies that $h$ is H\"older continuous in $\overline\Omega\cap B$ and there exist $C>0$ and $\beta\in(0,1)$ such that
 \[|h(y)|\le C\dist(y,\partial\Omega)^\beta r^{-\beta}\sup_{\Omega\cap \frac{2}{3}B}|h|, \quad y\in \Omega\cap\frac{1}{2}B.\]
  
 \begin{corollary} Let $\Omega_0\subset\R^n$ be a bounded Lipschitz domain. Let $u_\lambda$ be Laplace Dirichlet eigenfunction in $\Omega_0$. Then $u_\lambda$ extended by zero to $\partial\Omega_0$ is continuous on $\overline{\Omega}_0$.
 \end{corollary}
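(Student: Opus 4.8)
The plan is to deduce the corollary from Lemma~\ref{l:brn} by passing to the harmonic extension used in Section~\ref{ss:harm}. By interior elliptic regularity, $u_\lambda\in C^\infty(\Omega_0)$, so $h(x,t)=u_\lambda(x)e^{\sqrt\lambda t}$ is a genuine harmonic function on $\Omega:=\Omega_0\times\R\subset\R^{n+1}$, and $u_\lambda(x)=h(x,0)$. Since continuity of the zero-extension of $h$ at interior points of $\Omega$ is automatic, it suffices to verify the hypotheses of Lemma~\ref{l:brn} on a ball $B=B((x_0,t_0),r)$ centered at an arbitrary point of $\partial\Omega=\partial\Omega_0\times\R$, for $r$ small enough; here $\partial\Omega\cap B\in Lip(\tau)$ with $\tau$ the local Lipschitz constant of $\Omega_0$, because a Lipschitz graph defining $\partial\Omega_0$ locally also defines $\partial\Omega$ locally (the graph function does not depend on $t$).

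To produce the required approximating sequence, recall that $u_\lambda\in W_0^{1,2}(\Omega_0)$ means there are $\varphi_n\in C_0^\infty(\Omega_0)$ with $\varphi_n\to u_\lambda$ in $W^{1,2}(\Omega_0)$. Fix $\chi\in C_0^\infty(\R)$ with $\chi\equiv 1$ on $[t_0-r,t_0+r]$ and put
\[
h_n(x,t)=\varphi_n(x)\,\chi(t)\,e^{\sqrt\lambda t}.
\]
Then $h_n\in C_0^\infty(\R^{n+1})$ with $\operatorname{supp} h_n\subset\Omega_0\times\R=\Omega$, and on $B\cap\Omega$ one has $\chi\equiv 1$, so there $h_n=\varphi_n(x)e^{\sqrt\lambda t}$ and $\nabla h_n=(e^{\sqrt\lambda t}\nabla_x\varphi_n,\ \sqrt\lambda\,e^{\sqrt\lambda t}\varphi_n)$. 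Since $e^{\sqrt\lambda t}$ is bounded on the compact interval $[t_0-r,t_0+r]$, the $W^{1,2}(\Omega_0)$-convergence $\varphi_n\to u_\lambda$ upgrades to $h_n\to h$ and $\nabla h_n\to\nabla h$ in $L^2(B\cap\Omega)$. Hence Lemma~\ref{l:brn} applies and gives $h\in C(\overline\Omega\cap\tfrac12 B)$.

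Because every boundary point of $\Omega$ lies in $\tfrac12 B$ for such a ball $B$, and $h$ is smooth inside $\Omega$, the zero-extension of $h$ is continuous on all of $\overline\Omega$; restricting to $t=0$ shows that $u_\lambda$ extended by zero to $\partial\Omega_0$ is continuous on $\overline\Omega_0$. I do not anticipate a real obstacle here: the only points needing care are the localization in the $t$-variable, so that the multiplier $\chi(t)e^{\sqrt\lambda t}$ is smooth and compactly supported yet equals $e^{\sqrt\lambda t}$ on $B$, and the appeal to interior regularity that turns the weak eigenfunction into a classical harmonic function on $\Omega$ to which Lemma~\ref{l:brn} is applicable.
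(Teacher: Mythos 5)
Your proof is correct and follows the paper's approach exactly: pass to the harmonic extension $h(x,t)=u_\lambda(x)e^{\sqrt\lambda t}$ on $\Omega_0\times\R$ and invoke Lemma~\ref{l:brn}. The only difference is that you spell out the construction of the approximating sequence $h_n$ (via $\varphi_n\in C_0^\infty(\Omega_0)$ and a cutoff $\chi(t)$), which the paper leaves implicit in the phrase ``$h$ satisfies the assumptions of Lemma~\ref{l:brn}''; this detail is correct and welcome.
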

 
 \begin{proof} We have $u_\lambda\in W^{1,2}_0(\Omega_0)\cap C^\infty(\Omega_0)$ and $\Delta u_\lambda+\lambda u_\lambda=0$ in $\Omega_0$. We consider the harmonic function $h(x,t)=e^{\sqrt{\lambda}t}u_\lambda(x)$ in $\Omega=\Omega_0\times\R$. We note that for any $B$ centered on $\partial\Omega$, $h$ satisfies the assumptions of Lemma \ref{l:brn}. Then $h$ is continuous in $\overline{\Omega}$ and vanishes on $\partial\Omega$. This implies that $u_\lambda\in C(\overline{\Omega}_0)$ and $u_\lambda=0$ on $\partial\Omega_0$.
 \end{proof}

\subsection{Quantitative Cauchy uniqueness}\label{ss:cau}
 We give an elementary proof of  Lemma \ref{l:Cauchy} in this section for the convenience of the reader. 
 
 Let $G(x,y)=-c_d|x-y|^{2-d}$ be the fundamental solution of the Laplace equation in $\R^d$ when $d\ge 3$ (similar computations can be done with $G(x,y)=c_2\log|x-y|$ for $d=2$).
We write $\partial B_+=\Gamma\cup\Sigma$, where $\Gamma$ is the flat part of the boundary and $\Sigma=\partial B_+\setminus\Gamma$. We denote by $n$ the outer normal to $\partial B_+$. Then for $x\in B_+$, the Green formula implies
\begin{multline*}h(x)=\int_{\partial B_+} \left[\frac{\partial G}{\partial n}(x,y)h(y)-G(x,y)\frac{\partial h}{\partial n}(y)\right]dy\\=
\int_\Gamma  \left[\frac{\partial G}{\partial n}(x,y)h(y)
-G(x,y)\frac{\partial h}{\partial n}(y)\right]dy\\+\int_{\Sigma} \left[ \frac{\partial G}{\partial n}(x,y)h(y)-G(x,y)\frac{\partial h}{\partial n}(y)\right]dy\\=h_1(x)+h_2(x).\end{multline*}
 The functions $h_1$ and $h_2$ are defined in the complements of $\Gamma$ and $\Sigma$ respectively and are harmonic in the corresponding domains. Moreover, for $x\not\in\overline{B}_+$, applying the Green formula to the functions $h$ and $G(x,\cdot)$ in $B_+$, we obtain $h_1(x)+h_2(x)=0$. 

First, we estimate the value of  $h_1$ at some point $x=(x',x'')\in B\setminus\Gamma \subset\R^{d-1}\times\R$. We  divide the integral into two
\[h_1(x)=\int_{\Gamma} \frac{\partial G}{\partial n}(x,y)h(y)dy-\int_{\Gamma}G(x,y)\frac{\partial h}{\partial n}(y)dy=I_1(x)+I_2(x).\]
Since $|\partial h/\partial n|<\varepsilon$ on $\Gamma$, the second integral is bounded by
\[|I_2(x)|\le c_d\varepsilon \int_{B^{d-1}(x',2)}|x'-y'|^{2-d}dy'\le C\varepsilon.\]
To estimate the first term, we note that for $y\in\Gamma$,
\[\frac{\partial G}{\partial n}(x,y)=c_d'x''|x-y|^{-d},\]
and thereby
\[\int_{\Gamma} \left|\frac{\partial G}{\partial n}(x,y) \right|dy\le c_d'\int_{\R^{d-1}}\frac{|x''|}{(x''^2+|x'-y'|^2)^{d/2}}dy'=c''_d.\] 

Using that $|h(y)|<\varepsilon$ on $\Gamma$, we conclude that $|I_1(x)|<C\varepsilon$ in $B\setminus\Gamma$. Therefore $|h_1(x)|\le C\varepsilon$ in $B\setminus\Gamma$. Since $h_1(x)+h_2(x)=0$ when $x\in \R^d\setminus \overline{B}_+$, and $|h_1+h_2|=|h|
\le 1$ in $B_+$, 
we obtain that $h_2(x)$ satisfies 
\[|h_2(x)|<C\varepsilon\  \ {\text{in}}\ B_-=B\setminus \overline{B}_+\quad{\text{and}}\quad |h_2(x)|\le 1+C\varepsilon\ \ \text{in}\ B_+.\]
Now we apply  the three sphere inequality \eqref{eq:threeh}. We note that $h_2$ is harmonic in $B$. First we take $x=(0,-1/5)$ and $r=1/5$ and obtain
\[\sup_{B(0,1/10)}|h_2|\le\sup_{B(x,3/10)}|h_2|\le 2^d(\sup_{B(x,1/5)}|h_2|)^{1/2}(\sup_{B(x, 4/5)}|h_2|)^{1/2}\le C\varepsilon^{1/2}.\]
Then we apply the same theorem to the balls centered at the origin three times. Since  $(3/2)^3\cdot(1/10)>1/3$ and $(3/2)^2\cdot(1/10)<1/4$, we conclude that 
\[\sup_{\frac{1}{3}B}|h_2(x)|\le C\varepsilon^{1/16}.\]  Finally, combining the last inequality with the bound  $|h_1|\le C\varepsilon$ in $B_+$, we get the required estimate $|h|\le C\varepsilon^\gamma$ in $\frac13B_+$.

\end{document}